\newcommand{\Z}{{\mathbb Z}}
\newcommand{\C}{{\mathbb C}}
\newcommand{\Q}{{\mathbb Q}}
\newcommand{\R}{{\mathbb R}}
\newcommand{\Br}{\mathrm{Br}}
\newcommand{\Tr}{\mathrm{Tr}}
\newcommand{\cO}{\mathscr{O}}
\newcommand{\Ga}{\mathrm{Gal}}
\newtheorem{thm}{Theorem}[section]
\newtheorem{lemma}[thm]{Lemma}
\newtheorem{prop}[thm]{Proposition}
\newtheorem{cor}[thm]{Corollary}
\newcommand{\cG}{\mathcal{G}}
\newcommand{\gen}{\mathbf{gen}}
\newcommand{\Hom}{\mathrm{Hom}}
\newcommand{\uG}{\underline{G}}
\font\brus=wncyr10.240pk scaled 1200 .240pk
\begin{document}

\title[Division algebras with same maximal subfields]{Division algebras with the same maximal subfields}

\author[V.~Chernousov]{Vladimir I. Chernousov}
\author[A.~Rapinchuk]{Andrei S. Rapinchuk}
\author[I.~Rapinchuk]{Igor A. Rapinchuk}

\address{Department of Mathematics, University of Alberta, Edmonton, Alberta T6G 2G1, Canada}

\email{vladimir@ualberta.ca}

\address{Department of Mathematics, University of Virginia,
Charlottesville, VA 22904-4137, USA}

\email{asr3x@virginia.edu}

\address{Department of Mathematics, Harvard University, Cambridge, MA 02138, USA}

\email{rapinch@math.harvard.edu}

\begin{abstract}
We give a survey of recent results related to the problem of characterizing finite-dimensional
division algebras by the set of isomorphism classes of their maximal subfields. We also discuss various
generalizations of this problem and some of its applications. In the last section, we extend the problem
to the context of absolutely almost simple algebraic groups.
\end{abstract}

\maketitle

\hfill {\it To V.P.~Platonov on the occasion of his 75th birthday}

\section{Introduction}\label{S:Introduction}

Our goal in this paper is to give an overview of some recent work on the problem of characterizing a division algebra in terms of its maximal subfields (and, more generally, a simple algebraic group in terms of its maximal tori). Our main focus will be on the following question, as well as some of its variations that will be introduced later on:



\vskip2mm

$(\dag)$ \parbox[t]{15.5cm}{{\it What can one say about two finite-dimensional central division algebras $D_1$ and $D_2$ over a field $K$ given that $D_1$ and $D_2$ have the same (isomorphism classes of) maximal subfields?}}

\vskip2mm

\noindent To be precise, the condition on $D_1$ and $D_2$ means that they have the \emph{same degree} $n$ over $K$ (i.e. $\dim_K D_1 = \dim_K D_2 = n^2$), and a degree $n$ field extension $P/K$ admits a $K$-embedding $P \hookrightarrow D_1$ if and only if it admits a $K$-embedding $P \hookrightarrow D_2.$

Let us recall that for a central simple algebra $A$ of degree $n$ over $K$, a field extension $F/K$ is called a {\it splitting field} of $A$ if $A \otimes_K F \simeq M_n(F)$ as $F$-algebras. Furthermore, if $A$ is a division algebra, then the
splitting fields of degree $n$ over $K$ are precisely the maximal
subfields of $A$ (see, e.g., \cite[Theorem 4.4]{DF}). Since splitting fields and/or maximal subfields of a division $K$-algebra $D$ (or, more generally, any finite-dimensional central simple algebra) are at the heart of the analysis of its structure, one is naturally led to ask to what extent these fields actually determine $D$. In the case that one considers {\it all} splitting fields, this question was answered in 1955 by Amitsur \cite{Ami} with the following result:

\vskip2mm

\noindent {\it If $A_1$ and $A_2$
are finite-dimensional central simple algebras over a field $K$ that have the same splitting fields (i.e. a field extension $F/K$ splits $A_1$ if and only if it
splits $A_2$), then the classes $[A_1]$ and $[A_2]$ in the Brauer
group $\Br(K)$ generate the same subgroup: $\langle [A_1] \rangle =
\langle [A_2] \rangle$.}

\vskip2mm
\noindent The proof of this theorem (cf. \cite{Ami}, \cite[Ch. 5]{GiSz}) uses so-called {\it
generic splitting fields}, which have {\it infinite}  degree over
$K$. However, the situation changes dramatically if one allows only finite-dimensional splitting fields, as seen in the following example with cubic division algebras over $\Q.$


We first recall the Albet-Brauer-Hasse-Noether Theorem, according to which, for a global field $K$, there is an exact sequence
\begin{equation}\label{E:ABHN}
\tag{{ABHN}}
0 \to \Br(K) \to \bigoplus_{v \in V^K} \Br (K_v) \stackrel{\sum {\rm inv}_v}{\longrightarrow} \Q/ \Z \to 0,
\end{equation}
where $V^K$ is the set of all places of $K$, $K_v$ denotes the completion of $K$ at $v$, and ${\rm inv}_v$ is the so-called {\it invariant map} giving the isomorphism $\Br(K_v) \simeq \Q/ \Z$ if $v$ is a nonarchimedean place and $\Br(K_v) \simeq \Z/ 2 \Z$ if $v$ is a real place (see, e.g., \cite[Ch. VII, 9.6]{ANT} and \cite[\S 18.4]{Pierce} for number fields, \cite[\S 6.5]{GiSz} for function fields, and \cite{Roq} for a historical perspective). Now fix an integer $r \geqslant 2$, and pick $r$
distinct rational primes $p_1, \ldots , p_r$. Let $\varepsilon =
(\varepsilon_1, \ldots , \varepsilon_r)$ be any $r$-tuple with
$\varepsilon_i \in \{ \pm 1 \}$ such that  $\sum_{i = 1}^r \varepsilon_i
\equiv 0(\mathrm{mod} \: 3)$. By (ABHN),
there exists a cubic division algebra
$D(\varepsilon)$ over $\Q$ with the following local invariants
(considered as elements of $\Q/\Z$):
$$ \mathrm{inv}_p \:
D(\varepsilon) \ = \ \left\{ \begin{array}{ccl} \displaystyle
\frac{\varepsilon_i}{3} & , & p = p_i \ \ \text{for} \ i = 1, \ldots
, r; \\  0 & , & p \notin \{p_1, \ldots , p_r\} \ \
(\text{including} \ p = \infty)
\end{array} \right.
$$
Then for any two $r$-tuples $\varepsilon' \neq \varepsilon''$ as
above, the algebras $D(\varepsilon')$ and $D(\varepsilon'')$ have
the same finite-dimensional splitting fields, hence the same maximal
subfields (cf. \cite[18.4, Corollary b]{Pierce}), but are not isomorphic.
Obviously, the
number of such $r$-tuples $\varepsilon$ grows with $r$, so
this method enables one to construct an {\it arbitrarily large} (but
finite) number of pairwise nonisomorphic cubic division algebras
over $\Q$ having the same maximal subfields (at the same time, the cyclic subgroup $\langle [D(\varepsilon)] \rangle$
has order 3).

\vskip2mm
A similar construction can be carried out for division algebras of any degree $d > 2.$ On the other hand, it follows from (ABHN) that a central {\it quaternion} division algebra $D$ over a number field is determined up to isomorphism by its set of maximal subfields (see \S \ref{S:Genus}).
Thus, restricting attention to finite-dimensional splitting fields (in particular, maximal subfields) makes the question $(\dag)$ rather delicate and interesting.

For the analysis of division algebras having the same maximal subfields, it is convenient to introduce the following notion.
Suppose $D$ is a finite-dimensional central division algebra over a field $K$. We define the {\it genus} of $D$ as
$$
\gen(D)  = \{ \ [D'] \in \Br(K) \mid  D' \  \text{division algebra having the same maximal subfields as} \ D \}.
$$
Among the various questions that can be asked in relation to this definition, we will focus in this paper on the following two:

\vskip2mm

\noindent {\bf Question 1.} {\it When does $\gen(D)$ reduce to a
single element (i.e. when is $D$ determined uniquely up
to isomorphism by its maximal subfields)?}

\vskip1mm


\vskip2mm

\noindent {\bf Question 2.} {\it When is $\gen(D)$ finite?}

\vskip2mm


We will present the available results on Questions 1 and 2 in \S \ref{S:Genus} ; a general technique for approaching such problems, which is based on the analysis of the ramification of division algebras, will be outlined in \S \ref{S:Ramification}. Next, in \S \ref{S:OtherGenus} we will briefly discuss several other useful notions of the genus, including the {\it local genus} and the {\it one-sided genus}. Finally, in \S \ref{S:GenAlgGp}, we will give an overview of some ongoing work whose aim is to extend the analysis of division algebras with the same maximal subfields to the context of algebraic groups with the same (isomorphism or isogeny classes of) maximal tori.

\vskip5mm

\noindent {\bf Notation.} Given a field $K$ equipped with a discrete valuation $v$, we let $K_v$ denote the completion of $K$ with respect to $v$, $\mathcal{O}_v \subset K_v$ the valuation ring, and $\overline{K}_v$ the corresponding residue field. Furthermore, if $K$ is a number field, $V^K$ will denote the set of all places of $K$ and $V^K_{\infty}$ the subset of archimedean places. Finally, for a field $K$ of characteristic $\neq 2$ and any pair of nonzero elements $a,b \in K^{\times}$, we will let
$
\displaystyle{D = \left( \frac{a, b}{K} \right)}
$
be the associated quaternion algebra, i.e. the 4-dimensional $K$-algebra with basis $1, i, j, k$ and multiplication determined by
$$
i^2 = a, \ \ j^2 = b, \ \ ij = k = -ji.
$$

\section{Motivations}\label{S:Motivations}

In this section, we will describe two sources of motivation that naturally lead one to consider questions
in the spirit of $(\dag).$ The first
exhibits a connection with the theory of quadratic forms, while the second (which for us was actually
was the deciding factor) stems from problems in differential geometry.




Let $K$ be a field of char $\neq 2.$ To a quaternion algebra
$
\displaystyle{D = \left( \frac{a,b}{K} \right)},
$
we associate the quadratic form
$$
q(x, y, z) = a x^2 + by^2 - ab z^2.
$$
Notice that, up to sign, this is simply the form that gives the reduced norm of a pure quaternion, from which it follows that for any $d \in K^{\times} \setminus {K^{\times}}^2$, the quadratic extension $K(\sqrt{d})/K$ embeds into $D$ if and only if $d$ is represented by $q$ (see \cite[Remark 1.1.4]{GiSz}).

Now suppose that we have two quaternion division algebras $D_1$ and $D_2$ with corresponding quadratic forms $q_1$ and $q_2$. Then these algebras have the same maximal subfields if
and only if $q_1$ and $q_2$ represent the same elements over $K$. On the other hand, it is well-known that $D_1$ and $D_2$ are $K$-isomorphic if and only if $q_1$ and $q_2$ are equivalent over $K$ (cf. \cite[\S 1.7, Proposition]{Pierce}) . Thus, $(\dag)$ leads us to the following natural question about quadratic forms:

\vskip2mm

\noindent {\it Suppose $q_1$ and $q_2$ are ternary forms of $\mathrm{det} = -1$ that represent the same elements over $K$. Are $q_1$ and $q_2$ necessarily equivalent over $K$?}

\vskip2mm

\noindent Of course, the answer is \emph{no} for general quadratic forms, but, as the results described in \S \ref{S:Genus} show, it may be \emph{yes} for these special forms in certain situations.

Let us now turn to the geometric questions
dealing with length-commensurable locally symmetric spaces that initially led to our interest in $(\dag)$.
The general philosophy in the study of Riemannian manifolds is that the isometry or commensurability class of a manifold $M$ should
to a significant extent be determined by its {\it length spectrum} $L(M)$ (i.e., the collection of the lengths of all closed geodesics). To put this into perspective, in the simplest case, if $M_1$ and $M_2$ are 2-dimensional Euclidean spheres, then the closed geodesics are the great circles, and clearly these have the same lengths if and only if $M_1$ and $M_2$ are isometric. Furthermore, using the trace formula, one can relate this general idea to the problem
of when two isospectral Riemannian manifolds (i.e. those for which the spectra of the Laplace-Beltrami operators coincide) are isometric; informally, this is most famously expressed by Mark Kac's \cite{Kac} question {\it ``Can one hear the shape of a drum?"}

To make things more concrete, and, at the same time, highlight the connections with $(\dag)$, let us now consider what happens for Riemann surfaces of genus $> 1$ (we refer the reader to \cite{PR1} for a detailed discussion of these questions for general locally symmetric spaces).
Let $\mathbb{H} = \{ x + iy \in \C \:\vert \: y > 0 \}$ be the upper half-plane with the standard hyperbolic metric $ds^2 = y^{-2}(dx^2 + dy^2)$, and equipped with the usual isometric action of $SL_2 (\R)$ by fractional linear transformations.
Let $\pi \colon \mathrm{SL}_2(\R) \to
\mathrm{PSL}_2(\R)$ be the canonical projection. Recall that any compact Riemann surface $M$ of genus $> 1$ can be written as a quotient $\mathbb{H}/ \Gamma$, where $\Gamma \subset \mathrm{SL}_2 (\R)$ is a discrete subgroup containing $\{ \pm 1 \}$
with torsion-free image $\pi(\Gamma)$ (cf., e.g., \cite[Theorem 27.12]{For}).
It is well-known that closed geodesics in $M$ correspond to nontrivial semisimple elements $\gamma \in \Gamma$. Furthermore, since $\pi(\Gamma)$ is discrete and torsion-free, any semisimple element $\gamma \in \Gamma$, with $\gamma \neq \pm 1$, is automatically hyperbolic, hence $\pm \gamma$ is conjugate
to a matrix of the form
$$
\left( \begin{array}{cc} t_{\gamma} & 0 \\ 0 & t_{\gamma}^{-1} \end{array} \right),
$$
with $t_{\gamma}$ a real number $> 1$.
The length of the corresponding closed geodesic $c_{\gamma}$ in $M$ is then computed by the formula
\begin{equation}\label{E:length1}
\ell(c_{\gamma}) = \frac{2}{n_{\gamma}} \cdot \vert \log \vert
t_{\gamma} \vert \vert,
\end{equation}
where $n_{\gamma}$ is a certain integer (in fact, a {\it winding number} --- see \cite[\S 8]{PR1} for further details).
Notice that (\ref{E:length1}) implies that
$$
\Q \cdot L(M) = \Q \cdot \{ \log \vert t_{\gamma}\vert  \ \big\vert \ \gamma \in \Gamma \setminus \{ \pm 1 \}, \ \text{semi-simple} \}
$$
(the set $\Q \cdot L(M)$ is sometimes called the {\it rational weak length spectrum} of $M$). In order to analyze
this geometric set-up using algebraic and number-theoretic techniques, one considers the algebra
$$
D = \Q [\Gamma^{(2)} ] \subset M_2 (\R),
$$
where $\Gamma^{(2)} \subset \Gamma$ is the subgroup generated by squares. It turns out that $D$ is a quaternion algebra whose center is the {\it trace field} $K$ of $\Gamma$, i.e. the subfield generated over $\Q$ by the traces $\Tr(\gamma)$ for all $\gamma \in \Gamma^{(2)}$. Moreover, a noncentral semi-simple element $\gamma \in \Gamma^{(2)}$ gives rise to a maximal commutative \'etale subalgebra $K[\gamma]$ (see \cite[\S 3.2]{MR}). We should point out that this algebra $D$ is an important invariant of the group $\Gamma$; in particular, if $\Gamma$ is an arithmetic group, then $D$ is precisely the algebra involved in its description.

Now let $M_1 = \mathbb{H}/ \Gamma_1$ and $M_2 = \mathbb{H}/ \Gamma_2$ be two (compact) Riemann surfaces, with corresponding quaternion algebras $D_i = \Q[\Gamma_i^{(2)}]$ and trace fields $K_i = Z(D_i)$, for $i = 1,2$. Assume that $M_1$ and $M_2$ are {\it length-commensurable} (L-C), i.e.
$$\Q \cdot L(M_1) = \Q \cdot L(M_2).
$$
One then shows that $K_1 = K_2 = : K$ (see \cite[Theorem 2]{PR1}).\footnotemark \footnotetext{The equality of trace fields is actually proved by studying the relation of {\it weak commensurability} between $\Gamma_1$ and $\Gamma_2$, which one can then relate to the length commensurability of the corresponding Riemann surfaces.} Furthermore, it follows from (\ref{E:length1}) that for any nontrivial semi-simple element $\gamma_1 \in \Gamma_1^{(2)}$, there exists a nontrivial semi-simple element $\gamma_2 \in \Gamma_2^{(2)}$ such that
$$
t_{\gamma_1}^m = t_{\gamma_2}^n
$$
for some integers $m,n \geq 1.$ Consequently, $\gamma_1^m$ and $\gamma_2^n \in M_2 (\R)$ are conjugate, and hence we have an isomorphism  of the corresponding \'etale algebras
$$K[\gamma_1] = K[\gamma_1^m] \simeq K[\gamma_2^n] = K[\gamma_2].$$
Thus, the geometric condition (L-C) translates into the algebraic condition that
{\it $D_1$  and $D_2$ have the same isomorphism classes of maximal \'etale subalgebras intersecting $\Gamma_1^{(2)}$  and  $\Gamma_2^{(2)}$, respectively.}

On the other hand, what one actually wants to prove is that (L-C) implies that $M_1$ and $M_2$ are in fact {\it commensurable} (i.e. have a common finite-sheeted cover, or, equivalently, up to conjugation, the subgroups $\Gamma_1$ and $\Gamma_2$ are commensurable in $GL_2(\R)$). If that is the case, then we {\it necessarily} have $D_1 \simeq D_2$ (see \cite[Corollary 3.3.5]{MR}). So, our problem concerning Riemann surfaces leads to the following question about quaternion algebras:

\vskip2mm

\parbox[t]{16cm}{{\it Let $D_1$ and $D_2$ be two quaternion algebras over the same field $K$, and let $\Gamma_i \subset SL(1, D_i)$, for  $i= 1,2$, be Zariski-dense subgroups with trace field $K$. Assume that $D_1$ and $D_2$ have the same isomorphism classes of maximal \'etale subalgebras that intersect $\Gamma_1$ and $\Gamma_2$, respectively. Are $D_1$ and $D_2$ isomorphic?}}

\vskip2mm

This is a more refined version of our original question $(\dag).$ If $K$ is a number field, then, as we have already mentioned, two quaternion division algebras with the same maximal subfields are isomorphic (see \S \ref{S:Genus} for further details). This fact was used by A.~Reid \cite{Reid} to show that any two {\it arithmetically defined} iso-length spectral Riemann surfaces (i.e. $M_1$ and $M_2$ such that $L(M_1) = L(M_2)$)
are commensurable. The general case is likely to depend on the resolution of questions like the one formulated above.

We would like to conclude our discussion with the following two remarks.
First, it should be pointed out that if $\Gamma$ is non-arithmetic, then it is not uniquely determined by the corresponding quaternion algebra (in fact, there may be infinitely many non-commensurable cocompact lattices having the same associated quaternion algebra, cf. \cite{Vin2}), so an affirmative answer to our question about algebras will not immediately yield consequences for Riemann surfaces. Second, along with the precise (quantitative) version of the question formulated above, one can consider a qualitative version, viz. whether we always have finitely many isomorphism classes of division algebras having the same maximal subfields. Let us note that some finiteness results are available even for non-arithmetic Riemann surfaces. For example, it is known that every class of isospectral compact Riemann surfaces consists of finitely many isometry classes; we recall that according to a well-known conjecture, every class of isospectral surfaces is expected to consist of a {\it single} commensurability class.

\section{The genus of a division algebra}\label{S:Genus}

In this section, we will give an overview of the available results on Questions 1 and 2 about the genus of a division algebra that were formulated in \S \ref{S:Introduction}.

















We begin with a couple of remarks pertaining to Question 1. First, let us point out that
$\vert \gen(D) \vert = 1$ is
possible only if the class $[D]$ has exponent 2 in the Brauer group. Indeed, the
opposite algebra $D^{\mathrm{op}}$ has the same maximal subfields as
$D$. So, unless $D \simeq D^{\mathrm{op}}$ (which is equivalent to
$[D]$ having exponent 2), we have $\vert \gen(D) \vert > 1$. At the same time, as the example of cubic algebras over number fields discussed in \S \ref{S:Introduction} shows, the genus may very well be larger than $\{ [D], [D^{\mathrm{op}}] \}.$ Another example of this phenomenon can be constructed as follows.
Suppose $\Delta_1$ and $\Delta_2$ are central division algebras over a field $K$ of relatively prime odd degrees, and consider the division algebras $$
D_1 = \Delta_1 \otimes_K \Delta_2 \ \ \ \ \text{and} \ \ \ \ D_2 = \Delta_1 \otimes_K \Delta^{\mathrm{op}}_2.
$$
Then $D_2$ is not isomorphic to either $D_1$ or $D_1^{\mathrm{op}}$, but $D_1$ and $D_2$ have the same splitting fields. Indeed, since the degrees of $\Delta_1$ and $\Delta_2$ are relatively prime, an extension splits $D_1$ if and only if it splits $\Delta_1$ {\it and} $\Delta_2$; then it also splits $\Delta_2^{\rm op}$, hence $D_2$ (and vice versa). Thus, $D_1$ and $D_2$ have the same maximal subfields, and therefore $[D_2] \in \gen (D_1)$. Note that this example is in fact a consequence of the following general fact:
if $D$ is a central division algebra of degree $n$ over a field $K$, then for any integer $m$ relatively prime to $n$, the class $[D^{\otimes m}] = m [D] \in \Br(K)$ is represented by a central division algebra $D_m$ of degree $n$ which has the same maximal subfields as $D$ (see \cite[Lemma 3.6]{RR}).


As we have already indicated, the theorem of Albert-Brauer-Hasse-Noether (ABHN) enables one to give complete answers to Questions 1 and 2 in the case that $K$ is a number field. The precise statement is as follows.

\begin{prop}\label{P:NF}
Suppose $K$ is a number field and let $D$ be a finite-dimensional central division algebra over $K$.

\vskip1mm

\noindent {\rm (a)} \parbox[t]{16cm}{If $[D] \in \Br(K)$ has exponent 2 (in which case $D$ is a quaternion algebra), then $\vert \gen (D) \vert = 1.$}

\vskip1mm

\noindent {\rm (b)} \parbox[t]{15cm}{The genus $\gen (D)$ is finite for all $D$.}
\end{prop}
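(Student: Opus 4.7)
\textbf{Proof plan for Proposition~\ref{P:NF}.}

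Both parts rest on two inputs: the sequence (ABHN), which identifies a class $[D]\in\Br(K)$ with the tuple of local invariants $(\mathrm{inv}_v(D))_{v\in V^K}$ satisfying $\sum_v \mathrm{inv}_v=0$; and the local splitting criterion that a degree $n$ field extension $P/K$ embeds in a central division $K$-algebra $D$ of degree $n$ if and only if, for every $v\in V^K$ and every place $w\mid v$ of $P$, the order of $\mathrm{inv}_v(D)$ in $\Q/\Z$ divides the local degree $[P_w:K_v]$. The plan is to recover from $\gen(D)$ the ramification set $\Ram(D)$ and, at each ramified place $v$, the order of $\mathrm{inv}_v(D)$.

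\textbf{Part (a).} Let $D_1,D_2$ be quaternion division algebras with $[D_2]\in\gen(D_1)$. I would first show $\Ram(D_1)=\Ram(D_2)$. Suppose for contradiction $v_0\in\Ram(D_1)\setminus\Ram(D_2)$. Using Grunwald--Wang I would produce a quadratic field extension $P=K(\sqrt{d})/K$ which \emph{splits} at $v_0$ and \emph{remains a field} at every place of $\Ram(D_2)$ (a finite set not containing $v_0$). The local criterion then gives $P\hookrightarrow D_2$ but $P\not\hookrightarrow D_1$, contradicting $[D_2]\in\gen(D_1)$. Once equality of ramification sets is established, the only nontrivial quaternion local invariant is $\tfrac{1}{2}$, so $\mathrm{inv}_v(D_1)=\mathrm{inv}_v(D_2)$ for all $v$, and (ABHN) yields $D_1\simeq D_2$.

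\textbf{Part (b).} I would run the analogous argument for arbitrary degree $n$ to show that every $D'\in\gen(D)$ satisfies $\Ram(D')=\Ram(D)$ and that, at each $v\in\Ram(D)$, the order $m_v$ of $\mathrm{inv}_v(D')$ coincides with the order of $\mathrm{inv}_v(D)$. For the first equality, if $v_0\in\Ram(D')\setminus\Ram(D)$, I would construct a degree $n$ field extension $P/K$ that is completely split at $v_0$ (so every local factor has degree $1$) while, at each $v\in\Ram(D)$, has local behavior compatible with the embedding criterion for $D$; such $P$ embeds in $D$ but not in $D'$. For equality of orders, a similar degree $n$ extension whose factor at $v_0$ has degree equal to the smaller of the two orders but not to the larger would again separate the algebras. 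Once these facts are in hand, at each $v\in\Ram(D)$ the invariant $\mathrm{inv}_v(D')$ is one of only $\phi(m_v)$ generators of $\tfrac{1}{m_v}\Z/\Z\subset\Q/\Z$, so at most $\prod_{v\in\Ram(D)}\phi(m_v)$ local tuples occur; (ABHN) then finishes the proof of finiteness of $\gen(D)$.

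\textbf{Main obstacle.} The technical heart of the argument is the existence of global field extensions $P/K$ of a prescribed degree realizing prescribed local behavior at the finitely many places where something must be verified. For quadratic $P$ in part (a), this is standard Grunwald--Wang (with the usual minor care at dyadic places, easily sidestepped by the freedom at $v_0$). For general degree $n$ in part (b), one appeals either to the general Grunwald existence theorem for solvable extensions of number fields, or to a direct approximation argument on the coefficients of an irreducible polynomial producing a local factor of prescribed degree. Once the existence of such $P$ is granted, the rest is bookkeeping with local invariants and (ABHN).
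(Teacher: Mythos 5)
Your proposal is correct and rests on the same two pillars as the paper's proof: the exact sequence (ABHN) and the local embedding criterion for subfields of division algebras over number fields. The logical skeleton — show the ramification sets coincide, then count the possible local invariants at the finitely many ramified places — is exactly the paper's. The differences are at the level of tools and precision. In part (a) you invoke Grunwald--Wang to manufacture the distinguishing quadratic extension $K(\sqrt{d})$, whereas the paper gets by with weak approximation on $K^\times$ together with openness of ${K_v^\times}^2$ in $K_v^\times$; these accomplish the same thing, though the paper's route is marginally lighter (no dyadic caveats to wave away). In part (b) you actually prove more than the paper needs: the paper only establishes $R(D')=R(D)$ (citing Pierce, resp.\ Lemma~\ref{L:=}) and then bounds $\vert\gen(D)\vert$ crudely by $n^{\vert R(D)\vert}$ using $\vert{}_n\Br(K_v)\vert\le n$; you additionally pin down the \emph{order} of each local invariant, yielding the sharper bound $\prod_{v\in R(D)}\varphi(m_v)$. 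This refinement is correct, and the paper does eventually use it — it is precisely what Lemma~\ref{L:=} buys in the general ramification-theoretic setting, leading to the estimate (\ref{E:SizeImageGenus2}) — but at the stage of Proposition~\ref{P:NF} the paper keeps things crude. The cost of your refinement is the need to construct degree-$n$ field extensions with prescribed local factor types at finitely many places, which you rightly flag as the technical crux; a direct approximation argument on polynomial coefficients (with an auxiliary place forcing irreducibility) suffices, so this is a genuine though manageable obstacle. One small omission: you do not justify the parenthetical assertion that exponent $2$ forces $D$ to be a quaternion algebra, which the paper derives from the equality of exponent and index over number fields (a consequence of Grunwald--Wang).
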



\vskip2mm

\noindent Recall that according to (ABHN), we have an injective homomorphism
$$
0 \to \Br(K) \to \bigoplus_{v \in V^K} \Br(K_v),
$$
where
for each $v \in V^K$,
$$
\Br(K) \to \Br(K_v), \ \ \ [D] \mapsto [D \otimes_K K_v]
$$
is the natural map. We say that a finite-dimensional central division algebra $D$ over a number field $K$ (or its class $[D] \in \Br (K)$) is \emph{unramified} at $v \in V^K$ if its image in $\Br (K_v)$ is trivial, and {\it ramified} otherwise (note that this definition is consistent with the notion of ramification over general fields --- see \S \ref{S:Ramification}, and particularly Example 4.1 below).
The (finite) set of places where $D$ is ramified will be denoted by $R(D)$.

\vskip2mm

\noindent {\it Sketch of proof of Proposition \ref{P:NF}.} (a) First, we note that a division algebra $D$ over $K$ of exponent 2 is necessarily a quaternion algebra due to the equality of the exponent and index over number fields, which follows from the Grunwald-Wang Theorem (see, e.g., \cite[Ch. VIII, \S2]{Mil}). Next, we consider the 2-torsion part of (ABHN):
$$
0 \to {}_{2}\Br(K) \to \bigoplus_{v \in V^K} {}_2 \Br(K_v).
$$
Since ${}_2 \Br(K_v)$ is either $\Z/ 2 \Z$ or $0$ for all $v \in V^K$, we see that an algebra $D$ of exponent 2 over $K$ is determined uniquely up to isomorphism by its set of ramification places $R(D)$. Consequently, to prove that $\vert \gen (D) \vert = 1$, it suffices to show that if $D_1$ and $D_2$ are two quaternion division algebras having the same maximal subfields, then $R(D_1) = R(D_2).$ This easily follows from weak approximation, together with the well-known criterion that for $d \in K^{\times} \setminus {K^{\times}}^2$,
\begin{equation}\label{E:emb2}
L = K(\sqrt{d}) \ \ \text{embeds \ into} \ \ D \ \ \
\Leftrightarrow \ \ \ d \notin {K_v^{\times}}^2 \ \ \text{for all} \
\ v \in R(D).
\end{equation}
(cf. \cite[\S18.4, Corollary b]{Pierce}). Indeed, suppose there exists $v_0 \in R(D_1) \setminus R(D_2).$ Then, using the openness of
${K_v^{\times}}^2 \subset K_v^{\times}$ and weak approximation, one
can find $d \in K^{\times} \setminus {K^{\times}}^2$ such that
$$
d \in {K_{v_0}^{\times}}^2 \ \ \text{but} \ \ d \notin
{K_v^{\times}}^2 \ \ \text{for all} \ \ v \in R(D_2).
$$
Then according to (\ref{E:emb2}), the quadratic extension $L =
K(\sqrt{d})$ embeds into $D_2$ but not into $D_1$, a contradiction.

\vskip2mm

\noindent (b)  Let $D$ be a division algebra over $K$ of degree $n > 2$. We now consider the $n$-torsion part of (\ref{E:ABHN}):
$$
0 \to {}_n\Br(K) \to \bigoplus_{v \in V^K} {}_n\Br(K_v).
$$
To establish the finiteness of $\gen(D)$, one first observes
that if $D' \in \gen (D)$, then $R(D') = R(D)$ (cf. \cite[\S18.4, Corollary b]{Pierce}; in fact this is true not just over number fields --- see Lemma \ref{L:=} below).
Since ${}_n\Br(K_v)$ is $\Z / n \Z$, $\Z/2\Z$ or 0 for all $v \in V^K$ (see Example 4.1(a) below), it follows that
\begin{equation}\label{E:GlobalGen}
\vert \gen (D) \vert \leq \big\vert \bigoplus_{v \in R(D)} {}_n\Br(K_v) \big\vert \leq n^r,
\end{equation}
where $r = \vert R(D) \vert.$
\hfill $\Box$

\vskip2mm

For a concrete illustration of the argument presented above in the proof of (a), let us consider the following

\vskip1mm

\noindent {\bf Example 3.2.} Let
$$D_1 = \left( \frac{-1 , 3}{\Q}
\right) \ \ \text{and} \ \ D_2 = \left( \frac{-1 , 7}{\Q} \right).$$
One can check that $R(D_1) = \{2 , 3 \}$ and $R(D_2) = \{2 ,
7\}$, so $D_1$ and $D_2$ are nonisomorphic quaternion division algebras over $\Q$. Clearly, $10 \in {\Q_3^{\times}}^2$ while $10 \notin
{\Q_2^{\times}}^2 , {\Q_7^{\times}}^2$. Thus, according to (\ref{E:emb2}), the field $L = \Q(\sqrt{10})$
embeds into $D_2$ but not into $D_1$. In other words, $D_1$ and $D_2$ are
distinguished by their quadratic subfields. We would like to point out that the recent preprint \cite{LMPT} gives an effective way of producing, for two distinct quaternion algebras over an arbitrary number field, a quadratic field that distinguishes them by putting an explicit bound on its discriminant ({\it loc. cit.}, Theorem 1.3).


\vskip2mm

\addtocounter{thm}{1}



It is now natural to ask whether (and to what extent) these results for the genus carry over to general fields. Namely, can we expect the genus to be {\it trivial} for a quaternion division algebra, and {\it finite} for any finite-dimensional division algebra, over an {\it arbitrary} field $K$? It turns out that the answer is \emph{no} in both cases. Several people, including Rost, Schacher, Wadsworth, etc., have given a construction of quaternion algebras with nontrivial genus over certain very large fields. We refer the reader to \cite[\S 2]{GS} for the full details, and only sketch the main ideas here.


Let $D_1$ and $D_2$ be two nonisomorphic quaternion division algebras $D_1$
and $D_2$ over a field $k$ of characteristic $\neq 2$ that have a
common quadratic subfield (e.g., one can take $k = \Q$ and the quaternion algebras
$\displaystyle D_1$ and
$\displaystyle D_2$ considered in Example 3.2 above). If $D_1$ and
$D_2$ already have the same quadratic subfields, we are done.
Otherwise, there exists a quadratic extension $k(\sqrt{d})$ that
embeds into $D_1$ but not into $D_2$. Applying some general results on quadratic forms to the norm forms of $D_1$ and $D_2$, one shows that there exists a field extension
$k^{(1)}$ of $k$ (which is the function field of an appropriate quadric) such that

\vskip2mm

\noindent $\bullet$ $D_1 \otimes_k k^{(1)}$ and  $D_2 \otimes_k
k^{(1)}$ are non-isomorphic division algebras over $k^{(1)}$, \ {\bf
but}

\vskip1mm

\noindent $\bullet$ $k^{(1)}(\sqrt{d})$ embeds into $D_2 \otimes_k
k^{(1)}$.

\vskip2mm

\noindent One deals with other subfields one at a time by applying the same procedure to the algebras obtained from $D_1$ and $D_2$ by extending scalars to the field extension constructed at the previous step.
This process generates an ascending chain of fields
$$
k^{(1)} \subset k^{(2)} \subset k^{(3)} \subset \cdots ,
$$
and we let $K$ be the union (direct limit) of this chain. Then $D_1
\otimes_k K$ and $D_2 \otimes_k K$ are non-isomorphic quaternion
division $K$-algebras having the same quadratic subfields; in
particular $\vert \gen(D_1 \otimes_k K) \vert > 1$. Note that the
resulting field $K$ has infinite transcendence degree over $k$, and, in particular, is
{\it infinitely generated}. Furthermore, a modification of the above construction
(cf. \cite{Meyer}) enables one to start with an
infinite sequence $D_1, D_2, D_3, \ldots$ of quaternion division algebras over
a field $k$ of characteristic $\neq 2$ that are pairwise
non-isomorphic but share a common quadratic subfield (e.g., one can
take $k = \Q$ and consider the family of algebras of the form
$\displaystyle \left(\frac{-1 , p}{\Q} \right)$ where $p$ is a prime
$\equiv 3(\mathrm{mod} \: 4)$), and then build an infinitely
generated field extension $K/k$ such that the algebras $D_i
\otimes_k K$ become pairwise non-isomorphic quaternion division algebras with
any two of them having the same quadratic subfields. In particular, this yields an example of quaternion division algebras with {\it infinite} genus. More recently, a similar approach was used in \cite{Tikh} to show that for any prime $p$, there exists a field $K$ and a central division algebra $D$ over $K$ of degree $p$ such that $\gen(D)$ is infinite.

Thus, we see that while Questions 1 and 2 can be answered completely and in the affirmative over number fields, and more generally, over global fields, these questions become nontrivial over arbitrary fields. In fact, until quite recently, very little was known about the situation over fields other than global.
As we mentioned in \S \ref{S:Motivations}, the triviality of the genus for quaternion division algebras $D$ over number fields has consequences for arithmetically defined Riemann surfaces. In connection with their work on length-commensurable locally symmetric spaces in \cite{PR1}, G.~Prasad and the second-named author asked whether $\vert \gen (D) \vert = 1$ for any central quaternion division algebra $D$ over $K = \Q(x)$.\footnotemark \footnotetext{Such questions have certainly been around informally for quite some time, at least since the work of Amitsur \cite{Ami} mentioned in \S \ref{S:Introduction}, but to the best of our knowledge, this was the first ``official" formulation of a question along these lines, though the terminology of the genus was introduced later.}
This question was answered in the affirmative by D.~Saltman, and later, in joint work with S.~Garibaldi, it was shown that any quaternion division algebra over $k(x)$, where $k$ is an arbitrary number field, has trivial genus (in fact, they considered more generally so-called {\it transparent} fields of characteristic $\neq 2$ --- see \cite{GS}). Motivated by this result, we showed in \cite{RR} that for a given field $k$ of characteristic $\neq 2$, the triviality of the genus for quaternion division algebras over $k$ is a property that is stable under purely transcendental extensions. Subsequently, we established a similar {\it Stability Theorem} for algebras of exponent 2. The precise statements are given in the following.
\begin{thm}\label{T:StabThm}
{\rm (see \cite[Theorem A]{RR} and \cite[Theorem 3.5]{CRR2})} Let $k$ be a field of
characteristic $\neq 2$.

\vskip1mm

\noindent {\rm (a)} \parbox[t]{16cm}{If $\vert \gen(D) \vert = 1$ for any quaternion division algebra $D$ over $k$, then $\vert \gen(D') \vert = 1$ for any quaternion division algebra $D'$ over the field of rational functions $k(x).$}

\vskip1mm

\noindent {\rm (b)} \parbox[t]{16cm}{If $\vert \gen(D) \vert = 1$ for any
central division $k$-algebra $D$ of exponent 2, then the same
property holds for any central division algebra of exponent 2 over $k(x)$.}
\end{thm}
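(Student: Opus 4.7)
The plan is to handle both parts via the ramification of the division algebras at the discrete valuations $v$ of $K = k(x)$ trivial on $k$ (corresponding to the closed points of $\mathbb{P}^1_k$), combined with Faddeev's exact sequence for $\Br(K)$. For each such $v$ with residue characteristic $\neq 2$, the residue map
\[
\rho_v \colon {}_2\Br(K_v) \longrightarrow H^1(\overline{K}_v, \Z/2\Z) \cong \overline{K}_v^{\times}/\overline{K}_v^{\times 2}
\]
is well-defined. Fix $D_1', D_2'$ as in the theorem with $[D_2'] \in \gen(D_1')$. The ramification lemma announced in \S\ref{S:Ramification} (Lemma \ref{L:=}) guarantees that the ramification loci $R(D_1'), R(D_2') \subset \mathbb{P}^1_k$ coincide; denote this common set by $R$.

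The first and most delicate step is to upgrade equality of loci to equality of residues, i.e.\ to show $\rho_v([D_1']) = \rho_v([D_2'])$ for every $v \in R$. I would proceed by a lifting argument: writing $\rho_v([D_1']) = \overline{a}$ for some $\overline{a} \in \overline{K}_v^{\times}/\overline{K}_v^{\times 2}$, choose a lift $a \in K^{\times}$ with appropriate $v$-adic valuation so that $K(\sqrt{a})$ is contained in a maximal subfield of $D_1'$ (in part (a), it \emph{is} the maximal subfield; in part (b), it is the quadratic piece detected by the residue within a maximal subfield of the full degree). The genus hypothesis produces a corresponding embedding into $D_2'$, and, after completing at $v$ and applying Hensel's lemma to the minimal polynomial, one reads $\rho_v([D_2']) = \overline{a}$ as well. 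Once this holds at every $v \in R$, the class $[D_1'] - [D_2'] \in {}_2\Br(K)$ is unramified at every closed point of $\mathbb{P}^1_k$, and Faddeev's exact sequence produces $[\Delta] \in {}_2\Br(k)$ with $[D_2'] = [D_1'] + [\Delta \otimes_k K]$.

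The final step is to invoke the hypothesis over $k$ to force $[\Delta] = 0$. For a carefully chosen $k$-rational point $p \in \mathbb{A}^1_k(k) \setminus R$, both $D_1'$ and $D_2'$ have good reduction and specialize to central simple $k$-algebras $\overline{D}_1, \overline{D}_2$ of exponent dividing $2$, with $[\overline{D}_2] = [\overline{D}_1] + [\Delta]$. Choosing $p$ by an avoidance argument so that both $\overline{D}_i$ are division algebras of the full degree, I would verify that the genus relation descends: every maximal subfield of $\overline{D}_1$ lifts to a maximal subfield of $D_1'$, which embeds into $D_2'$ by hypothesis and hence specializes to a maximal subfield of $\overline{D}_2$, and vice versa. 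By the standing hypothesis on $k$, $\overline{D}_1 \simeq \overline{D}_2$, forcing $[\Delta] = 0$. Thus $[D_1'] = [D_2']$ in $\Br(K)$, and since both algebras have the same degree, $D_1' \simeq D_2'$.

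I expect the principal obstacle to lie in the residue-matching step, particularly in part (b). In the quaternion case the lifting is essentially Hensel's lemma on a quadratic polynomial and is relatively clean; but in part (b) the maximal subfields have degree larger than $2$, so one must extract the quadratic character defining the residue from an embedding of a higher-degree field, which requires a finer cohomological analysis. A secondary difficulty is the specialization in the final step: ensuring that $p$ can be chosen so that both $\overline{D}_i$ remain division algebras of the correct exponent and degree (so that the hypothesis on $k$ genuinely applies, rather than being vacuous for a matrix-algebra specialization) requires a Bertini-type avoidance of the ramification locus together with the ``bad'' points where the index drops.
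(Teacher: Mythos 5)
Your overall architecture --- Faddeev's exact sequence, matching residues at the closed points of $\mathbb{P}^1_k$, and a specialization to kill the constant class --- is exactly the paper's strategy. But the step you flag as ``most delicate'' is in fact essentially free, and your proposed Hensel-lifting route to handle it introduces real difficulties that the paper avoids entirely.

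The point is that Lemma~\ref{L:=} delivers strictly more than equality of ramification loci: it asserts $\ker\chi_v = \ker\chi'_v$, where $\chi_v = \rho_v([D_1'])$ and $\chi'_v = \rho_v([D_2'])$. For exponent-$2$ algebras these characters take values in $\Z/2\Z$, and a $\Z/2\Z$-valued character is uniquely determined by its kernel. Thus $\ker\chi_v = \ker\chi'_v$ already implies $\chi_v = \chi'_v$ at every $v$, in both parts (a) and (b), with no lifting argument needed. This is what the paper uses, and it dissolves your worry about part (b) requiring ``finer cohomological analysis'' to extract a quadratic character from a higher-degree subfield: Lemma~\ref{L:=} is degree-agnostic, and for exponent $2$ the characters are always quadratic. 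By contrast, your proposal --- produce a maximal subfield $K(\sqrt{a})$ of $D_1'$ with prescribed residue behaviour at $v$, transfer it to $D_2'$, and read off the residue there --- requires an existence statement (a quadratic extension of $k(x)$ that both embeds in $D_1'$ and is unramified at $v$ with a prescribed residue field) that is not automatic over a general field; you would effectively be reproving a piece of Lemma~\ref{L:=} by a harder route.

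The specialization step is in the right spirit but should be phrased, as in the paper's sketch of Theorem~\ref{T:GenAlgGp1}, at the level of the completion $K_{v_0}$ for a degree-one place $v_0$ at which both algebras are unramified: writing $D_i'\otimes_K K_{v_0} = M_{s_i}(\mathcal{D}_i)$, one gets $s_1 = s_2$ and the same maximal subfields for $\mathcal{D}_1, \mathcal{D}_2$ (from the local genus, cf.\ \cite[Corollary~2.4]{RR}), hence the same for the residue algebras $\overline{\mathcal{D}}_1, \overline{\mathcal{D}}_2$ over $k$, and the hypothesis over $k$ then forces $[\overline{\mathcal{D}}_1] = [\overline{\mathcal{D}}_2]$, whence $[\Delta] = 0$. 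This sidesteps your concern about arranging that the fiberwise specializations $\overline{D}_i$ remain division algebras of full degree, since one works directly with the residue algebras of the local division components rather than with a naive specialization of the $K$-algebras, and degenerate cases (split residue algebras) give $[\Delta]=0$ for free.
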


\vskip2mm

As an immediate consequence, we have

\begin{cor}\label{C:1}
If $k$ is either a number field or a finite field of $\mathrm{char}
\neq 2$, and $K = k(x_1, \ldots , x_r)$ is a purely transcendental
extension, then for any central division $K$-algebra $D$ of exponent
2, we have $\vert \gen(D) \vert = 1$.
\end{cor}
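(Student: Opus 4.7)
The plan is to induct on the number $r$ of indeterminates, invoking Theorem \ref{T:StabThm}(b) at each step to pass from $k(x_1,\ldots,x_{r-1})$ to $k(x_1,\ldots,x_r) = k(x_1,\ldots,x_{r-1})(x_r)$. Since $\mathrm{char}\, k \neq 2$, the same holds for every purely transcendental extension, so the hypothesis of the Stability Theorem is preserved throughout the induction.

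For the base case $r = 0$ (so $K = k$), I would separate the two possibilities on $k$. If $k$ is a number field, then any central division $k$-algebra of exponent 2 is necessarily a quaternion algebra, because over global fields the index equals the exponent (as recalled in the sketch of Proposition \ref{P:NF}, via Grunwald--Wang). Proposition \ref{P:NF}(a) then yields $|\gen(D)| = 1$ for every such $D$. If instead $k$ is a finite field of characteristic $\neq 2$, then Wedderburn's little theorem gives $\Br(k) = 0$, so no nontrivial central division $k$-algebras exist and the hypothesis ``$|\gen(D)| = 1$ for every central division $k$-algebra $D$ of exponent 2'' holds vacuously.

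For the inductive step, assume the conclusion is known over $K' := k(x_1,\ldots,x_{r-1})$: every central division algebra of exponent 2 over $K'$ has singleton genus. Since $\mathrm{char}\, K' = \mathrm{char}\, k \neq 2$, Theorem \ref{T:StabThm}(b), applied with $k$ replaced by $K'$, transfers this property to $K'(x_r) = k(x_1,\ldots,x_r) = K$, completing the induction.

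There is essentially no substantive obstacle beyond checking that the hypotheses of Theorem \ref{T:StabThm}(b) propagate correctly: one must verify (i) that the characteristic condition is preserved, which is immediate, and (ii) that the base step is valid, which reduces in the finite-field case to the triviality of $\Br(k)$ and in the number-field case to the coincidence of index and exponent plus Proposition \ref{P:NF}(a). The entire argument is therefore a formal iteration of the Stability Theorem once the two base cases are disposed of.
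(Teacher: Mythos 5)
Your proof is correct and follows exactly the route the paper intends: the corollary is stated as an "immediate consequence" of Theorem \ref{T:StabThm}(b), and the intended argument is precisely the induction on $r$ you give, with the base cases disposed of via Proposition \ref{P:NF}(a) (together with index $=$ exponent over number fields) and Wedderburn's theorem for finite fields.
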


\vskip2mm

\noindent {\bf Remark 3.5.} In \cite{Common}, the Stability Theorem has been generalized to function fields of Severi-Brauer varieties of algebras of odd degree.




\vskip2mm

Let us now turn to Question 2. As the above discussion shows, one cannot hope to have a finiteness result for the genus over completely arbitrary fields. Nevertheless, we have obtained the following finiteness statement over finitely-generated fields.


\addtocounter{thm}{1}

\begin{thm}\label{T:Finite1}
{\rm (\cite{CRR1}, Theorem 3)} Let $K$ be a finitely generated
field. If $D$ is a central division $K$-algebra of exponent prime to
$\mathrm{char} \: K$, then $\gen(D)$ is finite.
\end{thm}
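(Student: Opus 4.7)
The plan is to extend the number-field strategy of Proposition \ref{P:NF}(b), replacing the finite set of places of a global field by the set of divisorial valuations of a suitable integral model of $K$, and to proceed by induction on the transcendence degree of $K$ over its prime subfield. Set $n = \deg D$; since the index and exponent of a central division algebra share the same prime divisors, the hypothesis that $\exp D$ is prime to $\mathrm{char}\, K$ forces $n$ to be prime to $\mathrm{char}\, K$ as well, and every class in $\gen(D)$ lies in ${}_n\Br(K)$. The base of the induction (transcendence degree zero over the prime subfield) is a number field, for which the statement is Proposition \ref{P:NF}(b), or a finite field, for which $\Br(K) = 0$.

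For the inductive step, I would choose a regular, integral, separated scheme $\mathfrak{X}$ of finite type over $\Z$ with function field $K$ on which $n$ is invertible and $D$ extends to an Azumaya algebra $\mathscr{D}$; such a model exists by standard spreading-out. Let $V = V(\mathfrak{X})$ denote the set of discrete valuations of $K$ coming from codimension-one points of $\mathfrak{X}$. For each $v \in V$, the residue characteristic is prime to $n$, so there is a well-defined residue map
\[
\partial_v \colon {}_n\Br(K) \longrightarrow H^1(\overline{K}_v, \Z/n\Z),
\]
and the ramification set $\Ram(D) = \{v \in V : \partial_v([D]) \neq 0\}$ is finite, because $D$ extends to an Azumaya algebra on $\mathfrak{X}$. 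The general ramification lemma (Lemma \ref{L:=} of \S\ref{S:Ramification}), applied to the genus, yields $\Ram(D') = \Ram(D)$ for every $D' \in \gen(D)$.

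The heart of the proof is then to control, at each $v \in \Ram(D)$, the possible residues $\partial_v([D'])$, and to count the classes with prescribed ramification. For the first point, the shared maximal subfields of $D$ and $D'$ translate, via specialization at $v$, into matching cyclic extensions of the residue field $\overline{K}_v$; combined with the fact that $\overline{K}_v$ is itself finitely generated of strictly smaller transcendence degree, this confines $\partial_v([D'])$ to a finite subset $S_v \subset H^1(\overline{K}_v, \Z/n\Z)$, with the induction hypothesis providing the finiteness input over $\overline{K}_v$. For the second point, the fiber of the total residue map over a fixed datum $(\alpha_v)_{v \in \Ram(D)}$ is either empty or a torsor under the unramified Brauer group ${}_n\Br(\mathfrak{X})$, which is finite (after possibly shrinking $\mathfrak{X}$) thanks to the finiteness of $\he^2(\mathfrak{X}, \mu_n)$ for schemes of finite type over $\Z$. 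Together these give an estimate in the spirit of (\ref{E:GlobalGen}):
\[
\vert \gen(D) \vert \leq \vert {}_n\Br(\mathfrak{X}) \vert \cdot \prod_{v \in \Ram(D)} \vert S_v \vert \ < \ \infty.
\]

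The principal obstacle is the ramification analysis at each $v \in \Ram(D)$: proving Lemma \ref{L:=} in full generality and, more delicately, showing that the common-maximal-subfields condition restricts $\partial_v([D'])$ to a finite set. This is where the induction on transcendence degree is essential, as one feeds a finiteness statement over the finitely generated residue field $\overline{K}_v$ back into the analysis over $K$. The finiteness of the unramified Brauer group is the other nontrivial cohomological input, but is now a standard corollary of finiteness theorems in étale cohomology; combining these two ingredients yields the theorem.
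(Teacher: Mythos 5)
Your proposal correctly identifies the two pillars of the paper's proof — the ramification Lemma \ref{L:=} and the finiteness of ${}_n\Br(K)_V$ for a divisorial set $V$ (Theorem \ref{T:FiniteUnram}) — and the bound $\vert \gen(D) \vert \leq \vert {}_n\Br(\mathfrak{X}) \vert \cdot \prod_{v} \vert S_v \vert$ has the right shape. But the middle of the argument is internally inconsistent. If you spread $D$ out to an Azumaya algebra on $\mathfrak{X}$, then by definition $D$ is unramified at \emph{every} divisorial valuation of $\mathfrak{X}$, so $\Ram(D, V(\mathfrak{X})) = \emptyset$; Lemma \ref{L:=} then forces every $D' \in \gen(D)$ to be unramified there too, and $\gen(D)$ injects into ${}_n\Br(K)_V$, which is finite. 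That is the "simplified" argument the paper notes after Theorem \ref{T:FiniteUnram}, and there is nothing left to count at ramification places. The counting of residues is needed only if one insists on a fixed $V$ not adapted to $D$, in which case one uses the bound $(\ref{E:SizeImageGenus2})$.

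The induction on transcendence degree to bound $S_v$ is both unnecessary and does not work as stated. Lemma \ref{L:=} gives far more than $\Ram(D') = \Ram(D)$: it gives $\ker \chi_v = \ker \chi'_v$, and a character of $\mathcal{G}^{(v)}$ with a prescribed open kernel of index $m \mid n$ is a faithful character of a cyclic group of order $m$, of which there are exactly $\varphi(m) \leq \varphi(n)$. Thus $\vert S_v \vert \leq \varphi(n)$ immediately, for any discrete valuation with residue characteristic prime to $n$, with no hypothesis on $\overline{K}_v$. Your induction, moreover, faces a circularity: to compare the residue division algebras $\overline{\mathcal{D}}$ and $\overline{\mathcal{D}'}$ as members of a genus over a common field, you must first know their centers $\mathcal{E}$, $\mathcal{E}'$ (cyclic extensions of $\overline{K}_v$) coincide — but that is exactly the content of Lemma \ref{L:=}. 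And the induction hypothesis is a Brauer-group finiteness statement over $\overline{K}_v$, which does not directly constrain the residue characters in $\Hom(\mathcal{G}^{(v)}, \Z/n\Z)$. The paper's proof uses no induction on transcendence degree at the level of Theorem \ref{T:Finite1}; the genuinely nontrivial ingredient is the finiteness of ${}_n\Br(K)_V$, established either via Deligne's finiteness theorems plus Gabber purity, or via the sequence for the Brauer group of a curve together with a Weak Mordell--Weil type argument.
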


\vskip2mm

\noindent {\bf Remark 3.7.} (a) It is still an open question whether there exist quaternion division algebras over finitely generated fields with {\it nontrivial} genus.

\vskip1mm

\noindent (b) In \cite{KM}, D.~Krashen and K.~McKinnie have studied division algebras having the same finite-dimensional {\it splitting fields} (see \S\ref{S:OtherGenus} below for the associated notion of the genus).




\section{Ramification of division algebras}\label{S:Ramification}

In this section, we will describe some of the ideas involved in the proofs
of Theorems \ref{T:StabThm} and \ref{T:Finite1}. In broad terms, the arguments rely on many of the same general considerations that were already encountered in our discussion of the genus over number fields. More precisely, even though there is no direct counterpart of (ABHN) for arbitrary (finitely generated) fields, the analysis of \emph{ramification} of division algebras nevertheless again plays a key role.


We begin by recalling the standard set-up that is used in the analysis of ramification of division algebras. Let $K$ be a field equipped with a discrete valuation $v$, denote by 
$\cG^{(v)} = \Ga (\overline{K}_v^{\rm sep}/ \overline{K}_v)$ the absolute Galois group of the residue field $\overline{K}_v$ of $K_v$, and
fix an integer $n > 1.$ If either $n$ is prime to ${\rm char} \ \overline{K}_v$ or $\overline{K}_v$ is perfect, there exists a {\it residue map}
\begin{equation}\label{E:LocalRes}
r_v \colon {}_n\Br(K_v) \to \Hom (\cG^{(v)}, \Z/ n \Z),
\end{equation}
where the group on the right is the group of {\it continuous} characters mod $n$ of $\cG^{(v)}$ (see \cite[\S 10]{Salt}). Furthermore, it is known that $r_v$ is surjective.
A division algebra $D$ over $K_v$ of exponent $n$ (or its corresponding class $[D] \in {}_n\Br(K_v)$) is said to be \emph{unramified} if $r_v ([D]) = 0$, and the {\it unramified Brauer group at $v$} is defined as
$$
{}_n\Br(K_v)_{\{v \}} = \ker r_v.
$$
Thus, by construction, we have an exact sequence
\begin{equation}\label{E:RamLocalSeq}
0 \to {}_n\Br(K_v)_{\{v\}} \to {}_n\Br(K_v) \stackrel{r_v}{\longrightarrow}  \Hom (\cG^{(v)}, \Z/ n \Z) \to 0.
\end{equation}
It can be shown that there is an isomorphism ${}_n\Br (K_v)_{\{v \}} \simeq {}_n\Br(\overline{K}_v),$ and moreover, the latter group is naturally identified with ${}_n\Br (\mathcal{O}_v),$ where $\mathcal{O}_v$ is the valuation ring of $K_v$ (see \cite[Theorem 3.2]{W2} and the subsequent discussion). In other words, the algebras that are unramified at $v$ are precisely the ones that arise from {\it Azumaya algebras} over $\mathcal{O}_v.$

Next, composing the map in (\ref{E:LocalRes}) with the natural homomorphism ${}_n\Br (K) \to {}_n \Br(K_v)$, we obtain a residue map
\begin{equation}\label{E:LocalRes1}
\rho_v \colon {}_n\Br(K) \to \Hom (\cG^{(v)}, \Z/ n \Z),
\end{equation}
and again one says that a division algebra $D$ over $K$ is {\it unramified} if $\rho_v([D]) = 0.$  For fields other than local, to get information about ${}_n\Br(K)$, one typically uses not just one valuation of $K$, but rather a suitable set of valuations.
For this purpose, it is convenient to make the following definition. Fix an integer $n > 1$ and suppose $V$ is a set of discrete valuations of $K$ such that the residue maps $\rho_v$ exist for all $v \in V.$ We define the {\it unramified ($n$-torsion) Brauer group with respect to $V$} as
$$
{}_n\Br(K)_V = \bigcap_{v \in V} \ker \rho_v.
$$

\vskip2mm

\noindent {\bf Example 4.1.} (a) Let $p$ be a prime, $K = \Q_p$, and $v = v_{p}$ be the corresponding $p$-adic valuation.
Then, since $\mathbb{F}_p$ is perfect and $\Br (\mathbb{F}_p) = \{ 0 \},$ it follows from (\ref{E:RamLocalSeq}) that for any $n > 1,$ we have an isomorphism
$$
{}_n\Br (\Q_p) \stackrel{r_v}{\simeq} \Hom (\Ga (\bar{\mathbb{F}}_p/ \mathbb{F}_p), \Z / n \Z ) = \Hom_{\text{cont}} (\widehat{\Z}, \Z/ n \Z) \simeq  \Z / n \Z.
$$
Taking the direct
limit over all $n$, we obtain an isomorphism $\Br(\Q_p) \simeq \Q/\Z$, which is precisely the invariant map from local class field theory. This description of the Brauer group immediately extends to any finite extension of $\Q_p$. An important point here is that only the trivial algebra is unramified  over such a field.

\vskip1mm

\noindent (b) Let $K$ be a number field. It follows from part (a) that a finite-dimensional central division algebra $D$ over $K$ is unramified at a nonarchimedean place $v \in V^K$ if and only if its image in $\Br(K_v)$ is trivial (which is consistent with the definition that we used in \S \ref{S:Genus}). Combining this with (ABHN), we see that if
$S \subset V^K$ is a finite set containing $V^K_{\infty}$, then for $V = V^K \setminus S$ and any integer $n > 1$, the unramified Brauer group
$$
{}_n\Br(K)_V
$$
is finite.

\vskip1mm

\noindent (c) Let $C$ be a smooth connected projective curve over a field $k$ and $K = k(C)$ be the function field of $C$. For each closed point $P \in C$, we have a discrete valuation $v_P$ on $K$. The set of discrete valuations
$$
V_0 = \{ v_P \mid P \in C \ \text{a closed point} \}
$$
is usually called the set of {\it geometric places} of $K$ (notice that these are precisely the discrete valuations of $K$ that are trivial on $k$). If $n > 1$ is an integer that is relatively prime to $\text{char} \ k$, then for each $v_P \in V_0$,  we have a residue map $\rho_P = \rho_{v_P} \colon {}_n\Br(K) \to H^1 (G_P, \Z / n \Z)$, where $G_P$ is the absolute Galois group of the residue field at $P$.  In this case, the corresponding unramified Brauer group ${}_n\Br(K)_{V_0}$ will be denoted, following tradition, by ${}_n\Br(K)_{\rm ur}.$
Furthermore, if $k$ is a perfect field and $C$ is geometrically connected, then the above residue map $\rho_P$ extends to a map $\Br(K) \to H^1(G_P , \Q/\Z)$
defined on the entire Brauer group. We can then consider
$$
\Br(K) \stackrel{\oplus \rho_P}{\longrightarrow} \bigoplus_{P \in C_0} H^1 (G_P, \Q/\Z),
$$
where $C_0$ denotes the set of closed points of $C$.
The kernel $\displaystyle{\ker (\oplus \rho_P) = : \Br(K)_{\rm ur}}$  is known to coincide with the Brauer group $\Br(C)$ defined either in terms of Azumaya algebras or in terms of \'etale cohomology (see \cite[\S 6.4]{GiSz} and \cite{Lich}).  Then the group ${}_n\Br(K)_{\mathrm{ur}}$ is precisely the $n$-torsion of $\Br(C)$.

\addtocounter{thm}{1}

\vskip4mm


Returning to the general set-up, we would now like to mention a result that describes the ramification behavior of division algebras lying in the same genus. Let $K$ be a field equipped with a discrete valuation $v$ and $n > 1$ be an integer that is relatively prime to the characteristic of the residue field $\overline{K}_v.$ We have the following.

\begin{lemma}\label{L:=}
Let $D$ and $D'$ be central division $K$-algebras such that $$[D] \in
{}_n\Br(K) \ \  \text{and} \ \  [D'] \in \gen(D) \cap {}_n\Br(K).$$ Let $\chi_v$ and $\chi_v' \in \mathrm{Hom}(\mathcal{G}^{(v)} ,
\Z/n\Z)$ denote the images of $[D]$ and $[D']$, respectively, under the residue map $\rho_v$.
Then
$$
\ker \chi_v = \ker \chi_v'.
$$
In particular, $D$ is unramified at $v$ if and only if $D'$ is.
\end{lemma}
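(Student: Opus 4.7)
The plan is to convert the statement about kernels of residue characters into a statement about unramified splitting behavior at $v$, and then to exploit the genus hypothesis by constructing a global maximal subfield of $D$ whose completion at a place above $v$ is exactly the unramified cyclic extension of $K_v$ cut out by $\ker \chi_v$.

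Concretely, by the Galois correspondence applied to $\mathcal{G}^{(v)}$, the kernel $\ker \chi_v$ is cut out by a cyclic residue extension $\overline{L}_v / \overline{K}_v$ of degree $m := |\im \chi_v|$, which lifts to an unramified cyclic extension $L_v / K_v$ of the same degree; similarly one obtains $L_v' / K_v$ from $\chi_v'$. A routine computation with the residue map shows that for any unramified extension $M_v / K_v$, the base change $D \otimes_K M_v$ is unramified at the extension of $v$ if and only if $M_v \supseteq L_v$. Consequently, the equality $\ker \chi_v = \ker \chi_v'$ is equivalent to $L_v = L_v'$, and by symmetry it will suffice to show that $L_v$ splits $D' \otimes K_v$, which is in fact stronger than merely rendering $D'$ unramified but will emerge naturally from the construction below.

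The heart of the argument will be the construction of a separable field extension $F / K$ of degree $n$ that embeds in $D$ as a maximal subfield and has the property that in the decomposition $F \otimes_K K_v = \prod_{w \mid v} F_w$, one of the local factors $F_w$ equals $L_v$. I would build $F$ in two stages. First, using that $m$ is coprime to $\mathrm{char}\, \overline{K}_v$, lift a primitive element of $\overline{L}_v / \overline{K}_v$ to a monic polynomial $f_v \in \mathcal{O}_v[x]$ of degree $m$ whose root generates $L_v$ over $K_v$; since $K$ is dense in $K_v$, I would approximate $f_v$ by $g \in K[x]$ and apply Krasner's lemma to produce a root $\alpha \in K_v^{\mathrm{sep}}$ of $g$ with $K_v(\alpha) = L_v$. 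The field $F_0 := K(\alpha)$ then has degree $m$ over $K$, and $L_v$ appears as a direct factor of $F_0 \otimes K_v$. Second, since $L_v$ splits $D \otimes K_v$, I embed $F_0$ into $D$ and enlarge it to a maximal subfield $F$ of $D$ by adjoining a maximal subfield of the centralizer $C_D(F_0)$, a central simple $F_0$-algebra of degree $n / m$; with care, the enlargement can be chosen to preserve the $L_v$-factor in $F \otimes K_v$.

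Once $F$ is in hand, the hypothesis $[D'] \in \gen(D)$ forces $F$ to embed in $D'$ as a maximal subfield, so $F$ splits $D'$ and each local factor $F_w$ splits $D' \otimes K_v$; in particular, the factor $L_v$ splits $D' \otimes K_v$, and by the reformulation of the second paragraph this yields $\ker \chi_v \subseteq \ker \chi_v'$. Exchanging the roles of $D$ and $D'$ gives the reverse inclusion, so $\ker \chi_v = \ker \chi_v'$, and the concluding assertion of the lemma is then immediate, since $D$ is unramified at $v$ precisely when $\ker \chi_v = \mathcal{G}^{(v)}$. The main obstacle will be the second stage of the construction of $F$: verifying that $F_0$ actually embeds in $D$ --- equivalently, that $[D \otimes_K F_0]$ has index dividing $n / m$ in $\Br(F_0)$ --- and then selecting the enlargement inside $C_D(F_0)$ so that the $L_v$-component does not get absorbed into a larger local field. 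Over a number field, both issues are dispatched by Grunwald--Wang-type approximation; in the general setting one must substitute a refined Krasner approximation combined with the double centralizer theorem.
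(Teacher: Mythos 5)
Your plan hinges on producing a maximal subfield $F$ of $D$ with a local factor $F_w$ equal to the unramified extension $L_v/K_v$ cut out by $\ker\chi_v$. This is impossible in general: if $F$ is a maximal subfield of $D$ then $F$ splits $D$, hence every local factor $F_w$ of $F\otimes_K K_v$ splits $D\otimes_K K_v$. But $L_v$ need not split $D\otimes_K K_v$. Writing $D\otimes_K K_v = M_\ell(\mathcal D)$ and using the tame ramification theory of $\mathcal D$, the extension $L_v$ renders $\mathcal D$ \emph{unramified} but the residue algebra $\overline{\mathcal D}$ (a central division algebra over $\overline{E}_{\mathcal D}$, the field fixed by $\ker\chi_v$) can be noncommutative, in which case $\mathcal D\otimes_{K_v}L_v$ remains a nontrivial unramified division algebra and $L_v$ does not split it. A basic instance: take $K=k(t)$, $v=v_t$ (so $K_v=k((t))$, $\overline{K}_v = k$), and $D$ obtained from the inertial lift of a nonsplit division algebra over $k$; then $\chi_v=0$, $L_v=K_v$, yet $K_v$ certainly does not split $D\otimes K_v$. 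In that example your reformulation ``it suffices to show $L_v$ splits $D'\otimes K_v$'' asks for $D'$ to be locally split, whereas the Lemma only asserts that $D'$ is \emph{unramified} at $v$. So the step ``which will emerge naturally from the construction below'' conceals the actual content, and the construction cannot be carried out. A secondary problem, which you partly flag, is that even a weakened version of the construction (finding $F$ with $L_v\subseteq F_w$ for some $w$, $F_w/K_v$ unramified) is a genuine global approximation statement; over general finitely generated fields there is no Grunwald--Wang to fall back on, and a Krasner-style approximation does not control behavior at the other discrete valuations where $D$ ramifies.

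The paper's proof avoids global constructions entirely and is purely local after one initial reduction. It first uses the genus hypothesis to show that the local division algebras $\mathcal D$ and $\mathcal D'$ appearing in $D\otimes_K K_v = M_\ell(\mathcal D)$ and $D'\otimes_K K_v = M_{\ell'}(\mathcal D')$ satisfy $\ell=\ell'$ and have the same maximal subfields over $K_v$ (this is [RR, Corollary 2.4], and is where the passage from global to local information happens). It then shows directly that two division algebras over the complete field $K_v$ with the same maximal subfields have residue algebras with the same center ([CRR2, Lemma 2.3]), and invokes Wadsworth's description of the residue map identifying $\ker\chi_v$ with the subgroup of $\mathcal G^{(v)}$ fixing that center. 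This route never needs $L_v$ to split anything, works for unramified algebras over arbitrary residue fields, and is the reason the Lemma holds in the generality stated.
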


\noindent 
For the argument, let us recall that if $\mathcal{K}$ is a field that is complete with respect to a discrete valuation $v$ and $\mathcal{D}$ is a finite-dimensional central division algebra over $\mathcal{K}$, then $v$ extends uniquely to a discrete valuation $\tilde{v}$ on $\mathcal{D}$ (see \cite[Ch. XII, \S2]{Se}, \cite{W2}). Furthermore, the corresponding valuation ring
$\cO_{\mathcal{D}}$ has a unique maximal 2-sided ideal
$\mathfrak{P}_{\mathcal{D}}$ (the valuation ideal), and the quotient
$\overline{\mathcal{D}} =
\cO_{\mathcal{D}}/\mathfrak{P}_{\mathcal{D}}$ is a
finite-dimensional division (but not necessarily central) algebra,
called the {\it residue algebra}, over the residue field
$\overline{\mathcal{K}}$.

\begin{proof}(Sketch) Write
$$
D \otimes_K K_v = M_{\ell}(\mathcal{D}) \ \ \ \text{and} \ \ \ D'
\otimes_K K_v = M_{\ell'}(\mathcal{D}'),
$$
where $\mathcal{D}$ and $\mathcal{D}'$ are central division algebras over $K_v$. First, one shows that
$\ell = \ell'$ and $\mathcal{D}$ and $\mathcal{D}'$ have the same maximal subfields (\cite[Corollary 2.4]{RR}). Next, one verifies that the centers $\mathcal{E}$ and $\mathcal{E}'$ 
of the residue algebras
$\overline{\mathcal{D}}$
and $\overline{\mathcal{D}}'$, respectively, coincide (\cite[Lemma 2.3]{CRR2}).
Finally, using the fact that $\ker \chi_v$ and $\ker \chi'_v$ are precisely the subgroups of $\mathcal{G}^{(v)}$ corresponding to $\mathcal{E}$ and $\mathcal{E}'$, respectively (cf. \cite[Theorem 3.5]{W2}), we conclude that
$$
\ker \chi_v = \ker \chi'_v,
$$
as needed.
\end{proof}

We are now in a position to outline the proof of part (a) of Theorem \ref{T:StabThm} (see the proof of Theorem A in \cite{RR} for full details). Let $k$ be a field of characteristic $\neq 2$ such that $\vert \gen(D) \vert =1$ for every quaternion division algebra $D$ over $k.$ Viewing $k(x)$ as the function field of $\mathbb{P}_k^1$, we consider the following segment of {\it Faddeev's Exact Sequence} (see \cite[Corollary 6.4.6]{GiSz}):
\begin{equation}\label{E:Faddeev}
0 \to {}_2\Br(k) \to {}_2\Br(k(x)) \stackrel{\oplus \rho_P}{\longrightarrow} \bigoplus_{P \in (\mathbb{P}_k^1)_0} H^1 (G_P, \Z / 2 \Z).
\end{equation}
Suppose now that $D$ and $D'$ are quaternion division algebras over $k(x)$ that have the same maximal subfields. Then it follows from Lemma \ref{L:=}, together with the fact that $[D]$ and $[D']$ have exponent 2 in $\Br(k(x))$, that
$$
\rho_P([D]) = \rho_P([D'])
$$
for all $P \in (\mathbb{P}_k^1)_0.$ Consequently, (\ref{E:Faddeev}) yields that
$$
[D] = [D'] \cdot [\Delta \otimes_k k(x)]
$$
for some central quaternion algebra $\Delta$ over $k$ (see \cite[Corollary 4.2]{RR}). A specialization argument, which relies on the assumption that quaternion division algebras over $k$ have trivial genus, then shows that $[\Delta] = 0$, and hence $D \simeq D'$. Consequently $\vert \gen (D) \vert = 1$, as required.

Now we would like to indicate the main elements of the proof of Theorem \ref{T:Finite1}, which is also based on an analysis of ramification. Let $K$ be a finitely generated field and fix an integer $n > 1$ relatively prime to $\text{char} \ K.$ We will need to consider sets $V$ of discrete valuations of $K$ satisfying the following two properties:



\vskip2mm

\noindent $\text{(I)}$ {\it for any $a \in K^{\times}$, the set
$
V(a) := \{ v \in V \mid v(a) \neq 0 \}
$
is finite;}

\vskip2mm

\noindent $\text{(II)}$ {\it for any $v \in V$, the characteristic of the residue field $\overline{K}_v$ is prime to $n$.}

\vskip2mm

\noindent Note that (II) ensures the existence of a residue map
$$
{}_n\Br(K) \stackrel{\rho_v}{\longrightarrow} {\rm Hom}(\mathcal{G}^{(v)}, \Z / n \Z)
$$
for each $v \in V$.
When considering the case of number fields in \S \ref{S:Genus}, we observed that the set of ramification places is finite for any division algebra $D$, which, with the help of (ABHN), led
to the upper bound (\ref{E:GlobalGen}) on the size of $\gen(D)$. Over general fields, condition (I) again guarantees the finiteness of the set
$$
R(D) = R(D, V) = \{ v \in V \mid \rho_v([D]) \neq 0 \}
$$
of ramification places for any division algebra $D$ of degree $n$ over $K$ (see \cite[Proposition 2.1]{CRR2} for a slightly more general statement).
To obtain an analogue of (\ref{E:GlobalGen}), we argue as follows, using Lemma \ref{L:=}.
Suppose that $D$ is a central division algebra over $K$ of degree $n$ and let $[D'] \in \gen(D).$ For $v \in V$, we set
$$
\chi_v = \rho_v([D]) \ \ \ \text{and} \ \ \ \chi'_v = \rho_v([D']);
$$
recall that by Lemma \ref{L:=}, we have $\ker \chi_v = \ker \chi'_v.$ Notice that if the character $\chi_v$ has order $m \vert n$, then any character $\chi'_v$ of
$\mathcal{G}^{(v)}$ with the same kernel can be viewed as a
faithful character of the cyclic group
$\mathcal{G}^{(v)}/\ker \chi_v$ of order $m$. Consequently, there are
$\varphi(m)$ possibilities for $\chi'$, and therefore
\begin{equation}\label{E:SizeImageGenus}
\vert \rho_v(\gen(D)) \vert \leqslant \varphi(m)
\leqslant \varphi(n)
\end{equation}
for any $v \in V$ (as $m$ divides $n$), and $$\rho_v(\gen(D)) = \{1\}$$ if $\rho_v([D]) = 1$. Letting $\rho = (\rho_v)_{v \in V}$ be the direct sum of the residue maps for all $v \in V$, it follows that
\begin{equation}\label{E:SizeImageGenus1}
\vert \rho(\gen(D)) \vert \leq \varphi(n)^r,
\end{equation}
where $r = \vert R(D) \vert.$ Therefore, if $\ker \rho = {}_n\Br(K)_V$ is finite, we obtain the estimate
\begin{equation}\label{E:SizeImageGenus2}
\vert \gen(D) \vert \leq \varphi(n)^r \cdot \vert {}_n\Br(K)_V \vert.
\end{equation}

\noindent Thus, the proof of Theorem \ref{T:Finite1} boils down to establishing the finiteness of the unramified Brauer group with respect to an appropriate set of discrete valuations, which is the subject matter of the next result.
\begin{thm}\label{T:FiniteUnram}{\rm (\cite{CRR1}, Theorem 8)}
Let $K$ be a finitely generated field and $n > 1$ be an integer prime to $\mathrm{char}~K.$ Then there exists a set $V$ of discrete valuations of $K$ that satisfies conditions $\mathrm{(I)}$ and $\mathrm{(II)}$ introduced above and for which ${}_n\Br(K)_V$ is finite.
\end{thm}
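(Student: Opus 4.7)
The plan is to realize $K$ as the function field of a regular integral arithmetic scheme $\mathfrak{X}$ on which $n$ is invertible, take for $V$ the set of discrete valuations of $K$ coming from the codimension-one points of $\mathfrak{X}$, and deduce the finiteness of ${}_n\Br(K)_V$ from the finiteness of a suitable étale cohomology group of $\mathfrak{X}$ via a purity argument.

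First I would construct $\mathfrak{X}$. Let $p = \mathrm{char}\, K$; by hypothesis $p \nmid n$. If $p = 0$, spreading out a system of generators and relations exhibits $K$ as the function field of an integral affine scheme that is flat and of finite type over $\mathrm{Spec}\,\Z[1/n]$; if $p > 0$, one obtains similarly an integral affine scheme of finite type over $\F_p$. Applying Hironaka's resolution in characteristic zero, or de Jong's alterations (followed by normalization and shrinking) in positive characteristic, together with removing any finitely many closed points where $n$ fails to be a unit, one may assume $\mathfrak{X}$ is regular with $n$ invertible on it. Taking $V$ to consist of the discrete valuations $v_x$ attached to the codimension-one points $x$ of $\mathfrak{X}$, condition (I) holds because the Weil divisor of any $a \in K^\times$ is supported on finitely many prime divisors of the noetherian scheme $\mathfrak{X}$, and condition (II) holds because every residue field $\overline{K}_{v_x} = \kappa(x)$ has characteristic prime to $n$.

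Next I would identify ${}_n\Br(K)_V$ cohomologically. By Gabber's purity theorem for the Brauer group of a regular scheme, combined with the standard identification of the residue map $\rho_v$ in (\ref{E:LocalRes1}) with the étale-cohomological residue along the prime divisor $\overline{\{x\}}$, the common kernel of all $\rho_{v_x}$ inside ${}_n\Br(K)$ coincides with the image of ${}_n\Br(\mathfrak{X})$, which via the Kummer sequence on $\mathfrak{X}$ embeds into $H^2_{\mathrm{\acute{e}t}}(\mathfrak{X}, \mu_n)$. Thus it suffices to show that this cohomology group is finite.

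This last finiteness is where I expect the main technical content to lie. In equi-characteristic, when $\mathfrak{X}$ is of finite type over $\F_p$ with $p \nmid n$, Deligne's theorem on the finiteness of étale cohomology with constructible torsion coefficients on schemes of finite type over a finite field applies directly. In mixed characteristic, when $\mathfrak{X}$ is of finite type over $\Z[1/n]$, one can either invoke the analogous arithmetic finiteness theorem, or proceed by induction on $\dim\,\mathfrak{X}$: a Faddeev-type exact sequence generalizing (\ref{E:Faddeev}) reduces the finiteness to the analogous statement over the function fields of the codimension-one subschemes of $\mathfrak{X}$, the base case being a number field or a finite field, where (ABHN) (as in Example~4.1(b)) or the vanishing $\Br(\F_q) = 0$ supplies the desired finiteness. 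I anticipate the chief subtleties will be in controlling the interaction between ``vertical'' and ``horizontal'' divisors in the mixed-characteristic case, and in justifying purity globally on a regular -- not necessarily smooth -- arithmetic scheme.
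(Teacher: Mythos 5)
Your argument is essentially the alternative, ``cohomological'' proof that the paper attributes to a suggestion of Colliot-Th\'el\`ene, rather than the authors' own proof. The paper's own argument (from \cite{CRR1}) is quite different in structure: it writes $K = k(C)$ as the function field of a smooth projective curve over a field $k$ of one lower transcendence degree, takes $V$ to consist of the geometric places of $k(C)$ together with extensions of almost all places of $k$, and then uses the exact sequence $(\ref{E:GeomExact})$ relating $\Br(k(C))_{\mathrm{ur}}$ to $\Br(k)$ and to $H^1(k,J)$ (where $J$ is the Jacobian). Finiteness of the ``constant'' part $\iota_k^{-1}({}_n\Br(K)_V)$ is obtained by a Faddeev-type induction down to a global field, and finiteness of the ``geometric'' part $\omega({}_n\Br(K)_V)$ is proved by an unramified-cohomology argument modelled on the proof of weak Mordell--Weil. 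What the paper's route buys is \emph{effectivity}: as illustrated by Theorem~\ref{T:Elliptic1} and Example~4.7, it yields explicit upper bounds on $\vert{}_n\Br(K)_V\vert$ (hence on $\vert\gen(D)\vert$), whereas the Deligne--Gabber argument you sketch is shorter and works uniformly but, as the paper explicitly notes, gives no quantitative control. Your backup ``Faddeev-type induction'' in mixed characteristic is closer in spirit to the paper's argument, but it is not quite the same thing, since the paper inducts on transcendence degree over the prime field via curves over lower-dimensional function fields, not via codimension-one subschemes of an arithmetic model.

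One small technical slip worth noting: you do not need (and in mixed and positive characteristic do not in general have) Hironaka resolution or de~Jong alterations to produce the regular model $\mathfrak{X}$. An alteration in particular changes the function field, so it would not give you a model of $K$. What is actually used is simply the openness of the regular locus for excellent (here, finite type over $\Z$ or over $\F_p$) integral schemes: start from any affine integral model of finite type over $\Z[1/n]$ (resp.\ $\F_p$), observe that the regular locus is a nonempty open containing the generic point, and restrict to it. In characteristic $0$ one may first take a smooth $\Q$-model via Hironaka and then spread out, but even there one only needs the resulting scheme to be regular on a nonempty open, which again follows from openness of the regular locus. The rest of your outline --- conditions (I) and (II) from the divisorial description, Gabber's purity identifying $\bigcap_{v\in V}\ker\rho_v$ with the image of ${}_n\Br(\mathfrak{X})$, and Deligne's constructibility theorem from \cite{Del} feeding into the Leray spectral sequence over $\mathrm{Spec}\,\Z[1/n]$ or $\mathrm{Spec}\,\F_p$ --- matches the argument the paper ascribes to Colliot-Th\'el\`ene.
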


Our proof of this theorem, which will be outlined below, relies on the analysis of the exact sequence for the Brauer group of a curve. Subsequently, it was pointed out to us by J.-L. Colliot-Th\'el\'ene that the finiteness statement could also be derived from certain results in \'etale cohomology. More precisely, in this argument, we present
$K$ as the field of rational functions on a smooth
arithmetic scheme $X$, with $n$ invertible on $X$. Then one uses Deligne's finiteness theorems for the \'etale cohomology of constructible sheaves (Theorem 1.1 of the chapter
``Th\'eor\`emes de finitude" in \cite{Del}) to show that in this case, the $n$-torsion of the \'etale Brauer group is finite. On the other hand, by Gabber's purity theorem (see
\cite{Fuj} for an exposition of Gabber's proof, and also \cite[p.
153]{CT-S} and \cite[discussion after Theorem 4.2]{CT-Bour}
regarding the history of the question), the latter group coincides with the unramified Brauer group of $K$ with respect to the set $V$ of discrete valuations of $K$ associated with the prime divisors on $X$ (see \cite{CRR1} for more details). We will use the term {\it divisorial} to describe such a set of valuations. Note that this argument allows quite a bit of flexibility
in the choice of $V$: for example, $X$ can be replaced with an open subscheme, which allows us to delete from $V$ any \emph{finite} set. This flexibility somewhat simplifies the proof of the finiteness of the genus. Indeed, for a given division algebra $D$, we can choose $V$ so that $D$ is unramified at all places of $V$. Then we do not need the full strength of Lemma \ref{L:=}, but only the fact that any $[D'] \in \gen (D)$ is also unramified at all $v \in V$ (see Theorem \ref{T:Main1} below for an analogue of this for arbitrary algebraic groups). This immediately leads to the conclusion that
$$
\vert \gen (D) \vert \leq \vert {}_n\Br(K)_V \vert < \infty.
$$
The main disadvantage here is that this argument does not give any {\it explicit} estimates on the size of the genus.

On the other hand, our original proof of Theorem \ref{T:FiniteUnram} \emph{does} in principle allow one to obtain explicit estimates on the size of $\gen(D)$ in certain cases. We will only sketch the main ideas here; further details will be available in \cite{CRR3}. For simplicity, suppose that $K$ is a finitely generated field of characteristic 0. Then $K$ can be realized as the function field $k(C)$ of a smooth projective geometrically irreducible curve $C$ over a field $k$ which is a purely transcendental extension of a number field $P$. Let $V_0$ be the set of geometric places of $K$. It is well-known (see, e.g., \cite{Lich}) that the geometric Brauer group $\Br(K)_{\rm ur} = \Br(K)_{V_0}$ fits into an exact sequence
\begin{equation}\label{E:GeomExact}
\Br(k) \stackrel{\iota_k}{\longrightarrow} \Br(k(C))_{\rm ur} \stackrel{\omega}{\longrightarrow} H^1 (k,J)/\Phi (C,k),
\end{equation}
where $\iota_k$ is the natural map, $J$ is the Jacobian of $C$, and $\Phi (C,k)$ is a certain finite cyclic subgroup of $H^1(k,J)$ (in fact, if $C(k) \neq \emptyset$, then $\Phi(C,k) = 0$ and (\ref{E:GeomExact}) becomes a split exact sequence). We take our required set $V$ of discrete valuations of $K$ to consist of $V_0$, together with a set ${V}_1$ of extensions to $K$ of an appropriate set of valuations of the field $k$ (if $k$ is a number field, then $V_1$ consists of extensions to $K$ of almost all places of $k$). Then ${}_n\Br(K)_V \subset {}_n\Br(K)_{\rm ur}$ and the proof of Theorem \ref{T:FiniteUnram} reduces to verifying the finiteness of
$$\iota_k^{-1}({}_n\Br(K)_V) \ \ \  {\rm and}  \ \ \ \omega({}_n\Br(K)_V).$$
The proof of the finiteness of $\iota_k^{-1}({}_n\Br(K)_V)$ relies on certain properties of our presentation of $K$ as $k(C)$, together with Faddeev's exact sequence (to relate the Brauer group of $k$ to that of the number field $P$) and (ABHN). Our approach for proving the finiteness of $\omega({}_n\Br(K)_V)$ is inspired by the proof of the Weak Mordell-Weil theorem for elliptic curves and involves the analysis of unramified cohomology classes. We should point out that this argument, although with some extra work, also allows one to delete from the constructed set $V$ any finite subset and still retain the finiteness of ${}_n\Br(K)_V$.


To illustrate things in more concrete terms, we would like to conclude this section by sketching the proof of Theorem \ref{T:FiniteUnram} in the case that $K$ is the function field of an elliptic curve over a number field and $n = 2$ (see \cite[\S 4]{CRR2}).
The set-up that we will consider is as follows. Let $k$ be a number field and $E$ be an elliptic curve over $k$
given by a Weierstrass equation
\begin{equation}\label{E:Elliptic0}
y^2 = f(x), \ \ \text{where} \ \ f(x) = x^3 + \alpha x^2 + \beta x +
\gamma.
\end{equation}
Denote by $\delta \neq 0$ the discriminant of $f$. We will assume that $E$ has $k$-rational 2-torsion, i.e. $f$ has three
(distinct) roots in $k$:
$$
f(x) = (x-a)(x-b)(x-c).
$$
Let
$$
K := k(E) = k(x , y)
$$
be the function field of $E$. Since $E(k) \neq \emptyset$ and $E$ coincides with its Jacobian, the sequence (\ref{E:GeomExact}) yields the exact sequence
\begin{equation}\label{E:EllCurveSeq}
0 \to {}_2\Br(k) \to {}_2\Br(K)_{\rm ur} \stackrel{\omega}{\longrightarrow} {}_2H^1(k, E) \to 0.
\end{equation}
In fact, this sequence is split, with a section to $\omega$ constructed as follows. The Kummer sequence
$$
0 \to E[2] \to E \stackrel{\times 2}{\longrightarrow} E \to 0
$$
yields the exact sequence of cohomology
\begin{equation}\label{E:Kummer}
0 \to E(k)/2 E(k) \to H^1 (k, E[2]) \stackrel{\sigma}{\longrightarrow} {}_2H^1(k, E) \to 0.
\end{equation}
Since $E[2] \simeq \Z/ 2 \Z \times \Z/ 2 \Z$ as Galois modules, we have
$$
H^1(k, E[2]) \simeq k^{\times}/(k^{\times})^2 \times k^{\times}/(k^{\times})^2,
$$
and we define a map
$$
\nu \colon H^1 (k, E[2]) \to {}_2\Br(k(E))_{\rm ur}, \ \ (r, s) \mapsto \left[ \left(\frac{r \ , \ x - b}{K}
\right) \otimes_K \left(\frac{s \ , \ x - c}{K}  \right) \right].
$$
One then checks that $\omega \circ \nu = \sigma$ and $\nu (\ker \sigma) = 0$, which yields the required section
$$
\varepsilon \colon {}_2H^1(k,E) \to {}_2\Br(k(E))_{\rm ur}.
$$
This leads to the following description of the geometric Brauer group, which is in fact valid over any field $k$ of characteristic $\neq 2, 3.$
\begin{thm}\label{T:CGu}{\rm (\cite[Theorem 3.6]{CGu})}
Assume that the elliptic curve $E$
given by $(\ref{E:Elliptic0})$ has $k$-rational 2-torsion, i.e. $$f(x)=(x -
a)(x - b)(x - c) \ \  \text{with} \ \  a, b, c \in k.$$ Then
$$
{}_2\Br(K)_{\rm ur} = {}_2\Br(k) \oplus I,
$$
where ${}_2\Br(k)$ is identified with a subgroup of ${}_2\Br(K)$ via
the canonical map $\Br(k) \to \Br(K)$, and $I \subset
{}_2\Br(K)_{\rm ur}$ is a subgroup such that every element of $I$ is
represented by a bi-quaternion algebra of the form
$$
\left(\frac{r \ , \ x - b}{K}
\right) \otimes_K \left(\frac{s \ , \ x - c}{K}  \right)
$$
for some $r , s \in k^{\times}$.
\end{thm}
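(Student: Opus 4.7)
The plan is to establish the direct sum decomposition by producing a splitting of the exact sequence (\ref{E:EllCurveSeq}) whose image is the subgroup $I$ of the statement. As outlined in the excerpt, the required splitting $\varepsilon$ is to be induced by the map
\[
\nu \colon H^1(k, E[2]) \to {}_2\Br(K)_{\rm ur}, \qquad (r,s) \mapsto \left[\left(\frac{r,\,x-b}{K}\right) \otimes_K \left(\frac{s,\,x-c}{K}\right)\right],
\]
through the surjection $\sigma$ in (\ref{E:Kummer}). Thus the proof reduces to three claims: (i) $\nu$ lands in ${}_2\Br(K)_{\rm ur}$; (ii) $\omega \circ \nu = \sigma$; (iii) $\nu(\ker \sigma) = 0$. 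Granting these, $\varepsilon$ is well-defined and is a section of $\omega$, giving ${}_2\Br(K)_{\rm ur} = \mathrm{im}(\iota_k) \oplus I$ with $I = \mathrm{im}(\nu)$, which is the statement of the theorem.

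For claim (i), I would compute the residue at each closed point of $E$. The Weierstrass equation $y^2 = (x-a)(x-b)(x-c)$ forces $x - b$ to have even valuation everywhere on $E$: it is a unit away from $(b,0)$ and $\infty$; at $(b,0)$, the element $y$ is a local uniformizer and $(x-a)(x-c)$ is a unit, so $x-b = y^2/((x-a)(x-c))$ has valuation $2$; and at infinity $x - b$ has pole of order $2$. The same holds for $x-c$, so each quaternion factor, hence their tensor product, is unramified at every geometric place. Claim (iii) can then be reduced to (ii) via specialization at the point $\infty \in E(k)$: indeed, if $(r,s) \in \ker \sigma$, then by (ii) we have $\omega(\nu(r,s)) = 0$, so $\nu(r,s) = \iota_k(\eta)$ for some $\eta \in {}_2\Br(k)$. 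Evaluating both sides at $\infty$ (which is legitimate since $\nu(r,s)$ is unramified there by (i)) yields $\eta$ on the right and, using that $x-b$ and $x-c$ have even pole order with residue $1$ at $\infty$, the trivial class on the left; hence $\eta = 0$ and $\nu(r,s) = 0$.

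The main obstacle is claim (ii), the identification of $\omega \circ \nu$ with the Kummer connecting map $\sigma$. The map $\omega$ arises from the Leray spectral sequence for $E \to \mathrm{Spec}(k)$ (equivalently, from the description of the geometric Brauer group in (\ref{E:GeomExact})), while $\sigma$ is the connecting homomorphism of the Kummer sequence for multiplication by $2$ on $E$. Fixing the isomorphism $E[2] \simeq \mu_2 \times \mu_2$ determined by the generators $(b,0)$ and $(c,0)$, I would match the two boundary maps at the level of $1$-cocycles by a direct residue computation. The key input is that the residue of the symbol $(r, x-b/K)$ along the divisor of $x-b$ on $E$ (supported at $(b,0)$ and $\infty$) yields, after the canonical identification of the residue target with a piece of ${}_2 H^1(k, E)$, the Galois character $\gamma \mapsto \gamma\sqrt{r}/\sqrt{r}$, which corresponds under $\sigma$ to halving the $2$-torsion point $(b,0)$; the $(s, x-c/K)$ factor is treated symmetrically via $(c,0)$. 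This residue-to-cocycle translation is the technical core of the argument; it is essentially a reformulation of the classical $2$-descent identification for elliptic curves with full rational $2$-torsion, but the explicit sign and coefficient bookkeeping requires care.
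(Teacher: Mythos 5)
Your proposal follows exactly the strategy sketched in the paper just before the theorem statement: build the section $\varepsilon$ of $\omega$ from the Kummer boundary map via $\nu$, reducing to the same three claims (that $\nu$ lands in ${}_2\Br(K)_{\rm ur}$, that $\omega\circ\nu=\sigma$, and that $\nu(\ker\sigma)=0$). The parity argument for unramifiedness, the specialization at $\infty$ to kill $\ker\sigma$, and the identification with the $2$-descent boundary are all the standard ingredients carried out in the cited source [CGu, Theorem 3.6], so this is the same route.
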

As we mentioned above, the required set $V$ will consist of the geometric places $V_0$ together with a set $V_1$ of extensions of almost all places of $k$ to $K$, which is obtained as follows. For $s \in k^{\times}$, we denote by $V^k(s)$ the finite set
$\{ v \in V^k \setminus V_{\infty}^k \: \vert \: v(s) \neq 0 \}.$ Fix
a finite set of valuations $S
\subset V^k$ containing $V_{\infty}^k \cup V^k(2) \cup V^k(\delta)$,
as
well as all those nonarchimedean $v \in V^k$ for which at least one of
$\alpha, \beta, \gamma$ has a negative value. For a nonarchimedean $v
\in V^k$, let $\tilde{v}$ denote its extension to $F := k(y)$ given
by
\begin{equation}\label{E:Elliptic1}
\tilde{v}(p(y)) = \min_{a_i \neq 0} v(a_i) \ \ \text{for} \ \ p(y) =
a_n y^n + \cdots + a_0 \in k[y], \ \ p \neq 0.
\end{equation}
Now $K$ is a cubic extension of $F$, and one shows that for
$v \in V^k \setminus S$, the valuation $\tilde{v}$ has a {\it
unique} extension to $K$, which we will denote by $w = w(v)$ (see \cite[Lemma 4.5]{CRR2}). We set
$$
V_1 = \{ w(v) \mid v \in V^k \setminus S \}
$$
and let
$$
V = V_0 \cup V_1.
$$

\begin{prop}
The unramified Brauer group ${}_2\Br(K)_V$ is finite.
\end{prop}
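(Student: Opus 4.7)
The plan is to leverage Theorem~\ref{T:CGu}, which describes ${}_2\Br(K)_{\mathrm{ur}}$, and then analyze the additional constraints imposed by unramifiedness at the places of $V_1$. Since $V_0\subseteq V$, we have ${}_2\Br(K)_V\subseteq {}_2\Br(K)_{\mathrm{ur}}$, so by Theorem~\ref{T:CGu} every $[D]\in {}_2\Br(K)_V$ decomposes as $[D] = \beta_K + [\alpha]$, where $\beta\in {}_2\Br(k)$, $\beta_K$ denotes its image in $\Br(K)$, and
$$
\alpha = \left(\frac{r,\, x-b}{K}\right)\otimes_K \left(\frac{s,\, x-c}{K}\right) \qquad \text{for some } r,s\in k^{\times}.
$$
The goal is to show that unramifiedness at each $w\in V_1$ forces $\beta$ to be unramified at the corresponding $v\in V^k\setminus S$ and both $v(r)$ and $v(s)$ to be even, whence only finitely many $[D]$ can arise.

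Fix $v\in V^k\setminus S$; by the choice of $S$, $E$ has good reduction at $v$ to a smooth elliptic curve $\overline{E}_v$ over $\overline{k}_v$, given by $y^2 = (x-\bar a)(x-\bar b)(x-\bar c)$. Let $w = w(v)\in V_1$ be the unique extension of $\tilde v$ to $K$ furnished by \cite[Lemma~4.5]{CRR2}. A direct check---using that the cubic $f(x)-y^2$ stays irreducible and separable over $\overline{k}_v(\bar y)$, i.e.\ that $\overline{E}_v$ is not rational---shows that $w/v$ is unramified and that $\overline{K}_w$ is canonically identified with $\overline{k}_v(\overline{E}_v)$. In particular $\overline{k}_v$ is algebraically closed in $\overline{K}_w$ since $\overline{E}_v$ is geometrically irreducible, so the Kummer identification of $H^1(\cdot,\Z/2\Z)$ with square classes yields an injection
$$
\iota_v\colon \overline{k}_v^{\times}/(\overline{k}_v^{\times})^2 \hookrightarrow \overline{K}_w^{\times}/(\overline{K}_w^{\times})^2,
$$
with the compatibility $\rho_w(\beta_K) = \iota_v(\rho_v(\beta))$.

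Next, I compute $\rho_w([\alpha])$ via the tame symbol. Because $\bar x\in \overline{K}_w$ is transcendental over $\overline{k}_v$, we have $w(x-b) = w(x-c) = 0$, so the tame symbol gives $\rho_w((r,x-b)/K) \equiv (\overline{x-b})^{v(r)}$ and $\rho_w((s,x-c)/K) \equiv (\overline{x-c})^{v(s)}$ modulo squares. A standard divisor calculation on $\overline{E}_v$ shows $\mathrm{div}(\overline{x-b}) = 2([P_b]-[O])$ and $\mathrm{div}(\overline{x-c}) = 2([P_c]-[O])$, where $P_b, P_c$ are the non-identity $2$-torsion points corresponding to $\bar b, \bar c$. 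Since $P_b$, $P_c$, and $P_b+P_c = P_a$ are all nontrivial in $\overline{E}_v(\overline{k}_v)\simeq \mathrm{Pic}^0(\overline{E}_v)$, the three classes $\overline{x-b}$, $\overline{x-c}$, and $\overline{x-b}\cdot\overline{x-c}$ in $\overline{K}_w^{\times}/(\overline{K}_w^{\times})^2$ all lie \emph{outside} the image of $\iota_v$. Consequently, the equation $\iota_v(\rho_v(\beta)) + \rho_w([\alpha]) = 0$ forces both $v(r)$ and $v(s)$ to be even and $\rho_v(\beta) = 0$.

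Running this over every $v\in V^k\setminus S$, we conclude that $\beta$ lies in the kernel of ${}_2\Br(k)\to \bigoplus_{v\in V^k\setminus S} {}_2\Br(k_v)$, which is finite by (ABHN) since each ${}_2\Br(k_v)$ is $\Z/2\Z$ or $0$; and that $r,s$ lie modulo squares in the finite group $k(S,2) := \{a\in k^{\times}/(k^{\times})^2 : v(a)\equiv 0\pmod{2}\ \text{for all } v\notin S\}$, whose finiteness is a classical consequence of Dirichlet's $S$-unit theorem and the finiteness of the class group. Hence there are only finitely many possibilities for $[D]$, proving $\vert{}_2\Br(K)_V\vert<\infty$. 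The main technical point is the divisor-theoretic step verifying that the classes of $\overline{x-b}$ and $\overline{x-c}$ are $\F_2$-linearly independent in $\overline{K}_w^{\times}/(\overline{K}_w^{\times})^2$ modulo $\iota_v(\overline{k}_v^{\times}/(\overline{k}_v^{\times})^2)$; this is ultimately the function-field incarnation, over $\overline{k}_v$, of the injection $\overline{E}_v(\overline{k}_v)/2\overline{E}_v(\overline{k}_v)\hookrightarrow H^1(\overline{k}_v,\overline{E}_v[2])$ appearing in the Kummer sequence \eqref{E:Kummer}.
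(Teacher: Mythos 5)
Your argument is correct, and it reaches the same two intermediate conclusions as the paper (that $\Delta$ is unramified on $V^k \setminus S$ and that $v(r),v(s)$ are even there), but by a genuinely different route. The paper's primary argument cites corestriction properties together with an explicit description of residue maps from \cite{CRR2} (Proposition 4.7 and Lemma 4.8), and then sketches an alternative \emph{cohomological} proof of (\ref{E:EvenVal}): one shows that $\sigma^{-1}(\omega(x)) \subset H^1(k,E[2])_{V^k \setminus S}$ for $x \in I \cap {}_2\Br(K)_V$, and then reads off the even-valuation condition from the Kummer-theoretic description of unramified $H^1(k_v^{\mathrm{ur}},\Z/2\Z)$, mirroring the standard Weak Mordell--Weil argument. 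You instead compute the residue $\rho_w$ at $w = w(v)$ head-on: you identify $\overline{K}_w$ with $\overline{k}_v(\overline{E}_v)$ (using good reduction and irreducibility of the cubic, which forces $e(w/v) = 1$), evaluate $\rho_w$ of the bi-quaternionic part by the tame symbol to get $(\overline{x-b})^{v(r)}(\overline{x-c})^{v(s)}$ modulo squares, and then use the divisor computation $\mathrm{div}(\overline{x-b}) = 2([P_b]-[O])$ together with $P_b, P_c, P_a \neq O$ in $\mathrm{Pic}^0(\overline{E}_v)$ to conclude that $\overline{x-b}$, $\overline{x-c}$, and their product all lie outside $\iota_v(\overline{k}_v^{\times}/(\overline{k}_v^{\times})^2)$ modulo squares, forcing even valuations and $\rho_v(\beta)=0$ simultaneously. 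What your approach buys is a self-contained, hands-on residue computation that does not pass through the Kummer exact sequence (\ref{E:Kummer}) or the notion of unramified $H^1(k, E[2])$-classes, and it handles the constant and bi-quaternionic parts of the decomposition from Theorem~\ref{T:CGu} in a single pass; the paper's cohomological version is more conceptual and makes the analogy with Weak Mordell--Weil explicit, but leaves the key inclusion $\sigma^{-1}(\omega(x)) \subset H^1(k, E[2])_{V^k\setminus S}$ to the reader. The final descent to finiteness via (ABHN) for the constant part and the finiteness of $P(k,S)/(k^{\times})^2$ for the bi-quaternionic part is the same in both.
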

\begin{proof} (Sketch)
We consider separately the ramification behavior at places in $V_1$ of the constant and bi-quaternionic parts of elements of ${}_2\Br(K)_{\rm ur}$ in the decomposition given by Theorem \ref{T:CGu}. Let us write $[D] \in {}_2\Br(K)_V$ in the form
$$
[D] = [\Delta \otimes_k K] \otimes_K \left[ \left(\frac{r \ , \
x - b}{K} \right) \otimes_K \left(\frac{s \ , \ x - c}{K}
\right) \right],
$$
where $[\Delta] \in {}_2\Br(k)$ is a quaternion algebra and $r, s \in k^{\times}.$ By using properties of corestriction as well as an explicit description of residue maps in this situation (see \cite[Proposition 4.7 and Lemma 4.8]{CRR2}), one shows that for any $v \in V^k \setminus S$, the quaternion algebra $\Delta$ is unramified at $v$ and also that
\begin{equation}\label{E:EvenVal}
v(r), v(s) \equiv 0 \ (\text{mod} \ 2).
\end{equation}
The finiteness of ${}_2\Br(K)_V$ then follows. Indeed, as we saw in Example 4.1(b), the unramified Brauer group ${}_2\Br(k)_{V^k \setminus S}$ is finite, and hence there are only finitely many possibilities for $[\Delta].$ On the other hand, it is a well-known consequence of the finiteness of the class number and the fact that the group of units is finitely generated that the image under the canonical map $k^{\times} \to k^{\times}/(k^{\times})^2$ of the set
$$
P(k,S) = \{ x \in k^{\times} \: \vert \: v(x) \equiv
0(\mathrm{mod}\: 2) \ \ \text{for all} \ \ v \in V^k \setminus S \}
$$
is finite, which yields the finiteness of the bi-quaternionic part.
\end{proof}

We will now sketch a cohomological proof of (\ref{E:EvenVal}), which is similar to an argument used in the standard proof of the Weak Mordell-Weil Theorem (see, e.g. \cite[Ch. VIII, \S2]{Silv}).
First, we recall the following definition. Let $v \in V^k \setminus V_{\infty}^k$. We say that $x \in H^1(k, E[2])$ is {\it unramified} at $v$ if
$$
x \in \ker (H^1(k, E[2]) \stackrel{\text{res}_v}{\longrightarrow} H^1(k_v^{\rm ur}, E[2])),
$$
where $k_v^{\rm ur}$ is the maximal unramified extension of the completion $k_v$ and $\text{res}_v$ is the usual restriction map. Furthermore, given a set $U \subset V^k \setminus V_{\infty}^k$, we define the corresponding unramified cohomology group by
$$
H^1 (k, E[2])_U = \bigcap_{v \in U} \ker (H^1(k, E[2]) \stackrel{\text{res}_v}{\longrightarrow} H^1 (k_v^{\rm ur}, E[2])).
$$
Now, one shows that if $x \in I$ lies in  ${}_2\Br(K)_V$, then 
$$
\sigma^{-1}(\omega(x)) \subset H^1(k, E[2])_{V^k \setminus S},
$$
where $\sigma \colon H^1(k, E[2]) \to {}_2H(k, E)$ is the map appearing in (\ref{E:Kummer}). On the other hand, it is well-known that in the canonical isomorphism
\begin{equation}\label{E:UR1}
H^1(k_v^{\rm ur} , \Z/ 2 \Z) \simeq (k_v^{\rm ur})^{\times}/{(k_v^{\rm ur})^{\times}}^{2},
\end{equation}
a coset $a{(k_v^{\rm ur})^{\times}}^{2} \in
(k_v^{\rm ur})^{\times}/{(k_v^{\rm ur})^{\times}}^{2}$ corresponds to the character
$$\chi_a \colon \Ga((k_v^{\rm ur})^{\mathrm{sep}}/k_v^{\rm ur}) \to \Z / 2 \Z$$ with kernel
$\ker \chi_a =
\Ga((k_v^{\rm ur})^{\mathrm{sep}}/k_v^{\rm ur}
(\sqrt{a}))$ (see, e.g., \cite[Proposition 4.3.6 and Corollary 4.3.9]{GiSz}). It follows that if $a
{(k_v^{\rm ur})^{\times}}^2$ corresponds to a cohomology class that is
unramified at $v$, then $\sqrt{a} \in k^{\mathrm{ur}}_v$, and
consequently $v(a) \equiv 0(\mathrm{mod}\: 2)$ (see \cite[Proposition 1.3]{LaDG}). In particular, this, together with the description of the geometric Brauer group given above, shows that if
$$
\left[ \left(\frac{r \ , \ x - b}{K}
\right) \otimes_K \left(\frac{s \ , \ x - c}{K}  \right) \right] \in I
$$
is unramified at a place $w(v) \in V_1$, then (\ref{E:EvenVal}) holds.

We would also like to observe that the proof sketched above not only gives
the finiteness of ${}_2\Br(K)_V$, but in fact also yields an explicit upper bound on the size of the unramified Brauer group. More precisely, we have

\begin{thm}\label{T:Elliptic1}
For \emph{any} finite set $S$ as above, the unramified Brauer group ${}_2\Br(K)_V$ is finite of order
dividing
$$
2^{\vert S \vert - t} \cdot \vert {}_2 \mathrm{Cl}_S(k) \vert^2
\cdot \vert U_S(k)/U_S(k)^2 \vert^2,
$$
where $t = c + 1$ and $c$ is the number of complex places of
$k$, and $\mathrm{Cl}_S(k)$ and $U_S(k)$ are the class group and the
group of units of the ring of $S$-integers $\mathcal{O}_k(S)$,
respectively.
\end{thm}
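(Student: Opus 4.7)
The plan is to apply Theorem \ref{T:CGu} to decompose any $[D] \in {}_2\Br(K)_V \subseteq {}_2\Br(K)_{\mathrm{ur}}$ as
$$
[D] \ = \ [\Delta \otimes_k K] \otimes_K \left[ \left(\frac{r,\,x-b}{K}\right) \otimes_K \left(\frac{s,\,x-c}{K}\right) \right],
$$
with $[\Delta] \in {}_2\Br(k)$ uniquely determined by $[D]$ (the decomposition in Theorem \ref{T:CGu} being a direct sum) and $r, s \in k^\times$ (well-defined modulo $(k^\times)^2$ for counting purposes). The residue computation carried out in the proof of the preceding Proposition shows that $[D] \in {}_2\Br(K)_V$ forces both $[\Delta] \in {}_2\Br(k)_{V^k \setminus S}$ and $r, s \in P(k, S)$. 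Consequently,
$$
\vert {}_2\Br(K)_V \vert \ \leq \ \vert {}_2\Br(k)_{V^k \setminus S} \vert \cdot \vert P(k,S)/(k^\times)^2 \vert^{\,2},
$$
and it suffices to estimate each of the two factors on the right.

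For the first factor, restricting (ABHN) to $2$-torsion and then to classes unramified outside $S$ gives the short exact sequence
$$
0 \to {}_2\Br(k)_{V^k \setminus S} \to \bigoplus_{v \in S} {}_2\Br(k_v) \stackrel{\sum \mathrm{inv}_v}{\longrightarrow} \Z/2\Z \to 0.
$$
Since ${}_2\Br(k_v) = 0$ for $v$ complex and $\Z/2\Z$ otherwise, and $S \supseteq V^k_{\infty}$ contains all $c$ complex places of $k$, the middle term has order $2^{\vert S \vert - c}$; the rightmost map is surjective because $S$ contains at least one nonarchimedean place (for instance, any $v \in V^k(2)$). Hence $\vert {}_2\Br(k)_{V^k \setminus S} \vert = 2^{\vert S \vert - c - 1} = 2^{\vert S \vert - t}$. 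For the second factor, define $\phi \colon P(k,S) \to \mathrm{Cl}_S(k)$ by $\phi(x) = \bigl[\tfrac{1}{2}\sum_{v \notin S} v(x)\, v\bigr]$, where the divisor has integer coefficients precisely because $x \in P(k,S)$; its image lies in ${}_2\mathrm{Cl}_S(k)$ because $2\phi(x)$ is the class of the principal divisor $(x)$ restricted to $V^k \setminus S$. This map factors through $P(k,S)/(k^\times)^2$ and fits into the exact sequence
$$
1 \to U_S(k)/U_S(k)^2 \to P(k,S)/(k^\times)^2 \stackrel{\bar\phi}{\longrightarrow} {}_2\mathrm{Cl}_S(k) \to 0,
$$
with surjectivity coming from lifting a $2$-torsion divisor class to a divisor whose double is principal, and the kernel being $U_S(k)/U_S(k)^2$ because $\bar\phi([x]) = 0$ means that $x$ differs from an $S$-unit by a square in $k^\times$. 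Thus $\vert P(k,S)/(k^\times)^2 \vert = \vert U_S(k)/U_S(k)^2 \vert \cdot \vert {}_2\mathrm{Cl}_S(k) \vert$, and squaring and multiplying by $2^{\vert S \vert - t}$ yields the stated divisibility.

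The principal obstacle in a full write-up is verifying that the condition $[D] \in {}_2\Br(K)_V$ genuinely forces the ramification constraints on $[\Delta]$ and on $(r,s)$; but this was already established in the argument for the preceding Proposition, via corestriction for $\Delta$ and via the explicit formula (\ref{E:UR1}) for unramified classes in $H^1(k_v, \Z/2\Z)$ together with the unramified cohomology analysis for $(r,s)$. The remaining ingredients --- the restricted (ABHN) sequence, the finiteness of ${}_2\mathrm{Cl}_S(k)$, and the finite generation of $U_S(k)$ modulo squares --- are entirely standard, so the proof reduces to bookkeeping once Theorem \ref{T:CGu} and the Proposition are in hand.
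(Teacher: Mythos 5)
Your proposal is correct and follows precisely the route the paper indicates but leaves as a sketch: you use the direct-sum decomposition of Theorem~\ref{T:CGu}, bound the constant part by $2^{\vert S\vert - t}$ via the $2$-torsion of (ABHN) restricted to classes unramified outside $S$, and bound the bi-quaternionic part by $\vert P(k,S)/(k^\times)^2\vert^2$ via the standard exact sequence $1 \to U_S(k)/U_S(k)^2 \to P(k,S)/(k^\times)^2 \to {}_2\mathrm{Cl}_S(k) \to 0$. Both computations are carried out correctly (in particular the identification $U_S(k)\cap(k^\times)^2 = U_S(k)^2$ and the surjectivity onto $\Z/2\Z$ using a nonarchimedean place in $S$), so this is the same argument the paper has in mind.
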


\vskip2mm

\noindent {\bf Example 4.7.} Consider the elliptic curve $E$ over
$\Q$ given by $y^2 = x^3 - x$. We have $\delta = 4$, so $S =
\{\infty , 2 \}$. Furthermore,
$$
\vert S \vert - t = 1, \ \ \mathrm{Cl}_S(\Q) = 1 \ \ \text{and} \ \
U_S(\Q) = \{\pm 1\} \times \Z.
$$
So, by Theorem \ref{T:Elliptic1} and (\ref{E:SizeImageGenus2})
we have $\vert \gen(D) \vert \leqslant 2 \cdot 4^2 = 32$ for any quaternion division algebra $D$ over $K = \Q(E).$




\section{Some other notions of the genus}\label{S:OtherGenus}

In this section, we would like to mention several variants of the notion of the genus of a division algebra that come up quite naturally and have proven to be useful.

First, we have the {\it local genus}, which already appeared implicitly in the proof of Lemma \ref{L:=}. Let $K$ be a field and $V$ a set of discrete valuations of $K$. For a central division algebra $D$ of degree $n$ over $K$, we define the {\it local genus} $\gen_V(D)$ of $D$ with respect to $V$ as the collection of classes $[D'] \in \Br(K)$,
with $D'$ a central division algebra of degree $n$ over $K$, such that for every $v \in V$ the following holds: if we write
$$
D \otimes_K K_v = M_{\ell}(\mathcal{D}) \ \  \text{and} \ \  D' \otimes_K K_v =
M_{\ell'}(\mathcal{D}'),
$$
where $\mathcal{D}$ and $\mathcal{D}'$ are central division algebras over the completion $K_v$, then $\ell = \ell'$ and $\mathcal{D}$ and $\mathcal{D'}$ have the same maximal separable subfields. Now, Lemma 2.3 of \cite{RR} (see also \cite[Lemma 3.1]{GS}) shows that for any set $V$ of discrete valuations of $K$, we have
$$
\gen (D) \subset \gen_V(D).
$$
Next, suppose that $V$ is a set of discrete valuations satisfying condition (I) and (II) introduced in \S \ref{S:Ramification} for some integer $n > 1$. Then the argument that we sketched in \S \ref{S:Ramification} shows that if
{\it $n$ is prime to $\mathrm{char} \: K$ and the unramified Brauer group ${}_n\Br(K)_V$ is finite, then $\gen_V(D)$ is finite for any central division algebra $D$ of degree $n$ over $K$}. Indeed, the proof of Lemma \ref{L:=} yields the fact that the characters $\chi_v = \rho_v([D])$ and $\chi'_v = \rho_v([D'])$  for $v \in V$ have the same kernel assuming only that $[D'] \in \gen_V(D)$. This remark eventually leads to the bound (\ref{E:SizeImageGenus2}), with $\gen_V(D)$ in place of $\gen(D)$.

Next, as we already mentioned in Remark 3.7(b), D.~Krashen and K.~McKinnie \cite{KM} have studied division algebras having the same finite-dimensional {\it splitting} fields. For this purpose, given a finite-dimensional central division algebra $D$ over a field $K$, one defines $\gen'(D)$ as the collection of classes $[D'] \in \Br(K)$ with the property that a finite field extension $L/K$ splits $D$ if and only if it splits $D'$ (the notation in \cite{KM} is slightly different). Note that we clearly have $$\gen'(D) \subset \gen (D).$$ The main result of \cite{KM} is that if $p$ is a prime different from $\text{char}~K$ and $$\vert \gen'(\Delta) \vert < \infty$$ for all $[\Delta] \in {}_p\Br(K)$, then
$$
\vert \gen'(D) \vert < \infty
$$
for all $[D] \in {}_p\Br(K(t))$ (a similar result for $\gen(D)$ was obtained in \cite{CRR2}, but technically neither result is a consequence of the other).

The third notion that we would like to discuss briefly is the so-called {\it one-sided} (or {\it asymmetric}) {\it genus} that was introduced in \cite{Common}. Following \cite{Common}, given two central division algebras $D$ and $D'$ of the same degree over a field $K$, we will write
$$
D \leq D'
$$
if any maximal subfield $P/K$ of $D$ admits a $K$-embedding $P \hookrightarrow D'.$ For a division algebra $D$ of degree $n$ over $K$, we define the one-sided genus
$\gen^1(D)$ to be the collection of classes $[D'] \in \Br(K)$, where $D'$ is a central division algebra of degree $n$ such that $D \leq D'.$ We refer the reader to \cite{Common} for a detailed treatment of this notion, and only highlight here the difference in the ramification properties arising in the analysis of the two-sided and one-sided versions of the genus.

By Lemma 4.2, if $[D'] \in \gen(D)$, then for a discrete valuation $v$ of $K$, the algebra $D$ ramifies at $v$ if and only if $D'$ does (assuming that the residue map $\rho_v$ is defined).
For the one-sided genus, the situation is more complicated. If $K$ is a number field, then, as noted prior to sketching the proof 
of Proposition \ref{P:NF}, the relation $D \leq D'$ implies $R(D') \subset R(D)$ for the corresponding sets of ramification places.
On the other hand, let $K = \R((x))$ with the standard
discrete valuation $v$, and consider the quaternion division $K$-algebras
$$
D_1 = \left(  \frac{-1 , -1}{K} \right) \ \ , \ \ D_2 = \left(
\frac{-1 , x}{K} \right).
$$
Then $D_1$ is
unramified at $v$ with residue algebra isomorphic to the usual
algebra of Hamiltonian quaternions $\mathbb{H}$ over $\R$, while $D_2$
is ramified at $v$. Note that any quadratic subfield $L$ of $D_1$
must be unramified, and since the residue field $\overline{K}_v =
\R$ has $\C$ as its only nontrivial finite extension, we conclude
that $L$ is isomorphic to $K(i) = \C((x))$ (where $i^2 = -1$). Since
$\C((x))$ is contained in $D_2$, we see that any maximal subfield of
$D_1$ can be embedded into $D_2$. This means that $D_1 \leq D_2$, hence $[D_2] \in \gen^1(D_1).$ Nevertheless, $D_1$ is unramified
at $v$ while $D_2$ is ramified.

The results of \cite{Common} show that
this construction provides essentially the main instance where such ramification behavior appears. To give precise statements, we need to introduce some terminology and fix notations. In \cite{Common}, a finite-dimensional central division algebra $D$ over a field $K$ is called {\it varied}
if there is no nontrivial cyclic
extension $P/K$ contained isomorphically in every maximal subfield of $D$ (note that it suffices to check this property for $P/K$ of prime degree). For example, it is known that if $K$ is a field that is finitely generated over a global field, then any central division algebra $D$ over $K$ is varied (see \cite[Theorem 1]{Common}).
For a central division algebra $D$ over a field $K$ that is complete with respect to a discrete valuation, we will denote by $\overline{D}$ the residue algebra and by ${\overline{E}}_D$ the center of $\overline{D}$; we will also set $E_D$ to be the unique unramified subfield of $D$ with residue field $\overline{E}_D$ (in \cite{Common}, the latter is referred to as the {\it ramification field} of $D$).

\begin{prop}\label{P:Common1}{\rm (\cite{Common}, Proposition 4)}
Let $K$ be a field that is complete with respect to a discrete valuation $v$, and $D$, $D'$ be central division algebras over $K$ of the same degree such that $D \leq D'.$ If $\overline{D}/{\overline{E}}_D$ is varied, then $E_D = E_{D'}.$
\end{prop}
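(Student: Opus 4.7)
My strategy is to prove $E_D = E_{D'}$ (as $K$-subfields up to Skolem--Noether conjugation) by establishing two containments, with the variedness hypothesis entering only in the reverse direction. For the easy containment $E_D \hookrightarrow E_{D'}$, I would first extend $E_D$ to a maximal subfield $L$ of $D$ (possible since $E_D$ is a commutative $K$-subring of the division algebra $D$ and hence sits inside some maximal commutative $K$-subalgebra, necessarily a field). Invoking $D \leq D'$, $L$ embeds $K$-linearly into $D'$, and the image of $E_D$ is an unramified extension of $K$ inside $D'$. Since every unramified subfield of $D'$ embeds into the maximal unramified subfield $E_{D'}$, we obtain $E_D \hookrightarrow E_{D'}$.

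For the reverse containment I would argue by contradiction. Suppose $E_D \subsetneq E_{D'}$. Then $\overline{E}_{D'}/\overline{E}_D$ is a nontrivial extension, and it is cyclic because the residue-center extension $\overline{E}_{D'}/\overline{K}$ of a central division algebra over a complete DVR is cyclic and subextensions of cyclic extensions are cyclic. The core step is: for each maximal subfield $\overline{M}$ of $\overline{D}$ over $\overline{E}_D$ (of degree $d := \deg_{\overline{E}_D} \overline{D}$), I would lift $\overline{M}$ to a maximal subfield $L$ of $D$ containing $E_D$ with residue field $\overline{L} = \overline{M}$; the maximal unramified subfield $L_0$ of $L/K$ then has $[L_0 : K] = fd$ (where $f = [\overline{E}_D : \overline{K}]$) and $L_0 \supseteq E_D$. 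Next, embed $L \hookrightarrow D'$ via $D \leq D'$, adjusting by Skolem--Noether so that this embedding restricts to a fixed embedding $E_D \hookrightarrow E_{D'}$; then the image of $L_0$ is an unramified subfield of $D'$ containing $E_D$, hence lies in $E_{D'}$. Thus $L_0$ becomes an intermediate subfield $E_D \subset L_0 \subset E_{D'}$ of degree $d$ over $E_D$. Since $\overline{E}_{D'}/\overline{E}_D$ is cyclic, it admits a unique intermediate subfield $\overline{F}_d$ of degree $d$ over $\overline{E}_D$, so $\overline{M} \cong \overline{F}_d$ independently of the initial choice of $\overline{M}$. Assuming $d > 1$, this produces a nontrivial cyclic extension $\overline{F}_d/\overline{E}_D$ contained in every maximal subfield of $\overline{D}$, contradicting variedness. (The degenerate case $d = 1$, where $\overline{D} = \overline{E}_D$ is itself a field, would be handled by a separate direct argument comparing totally ramified maximal subfields of $D$ and $D'$.)

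The principal technical obstacle is the lifting step in the hard direction: producing, for each maximal subfield $\overline{M}$ of $\overline{D}$, a maximal subfield $L$ of $D$ containing $E_D$ with prescribed residue field $\overline{M}$. I would handle this by passing to the centralizer $D_1 = C_D(E_D)$, a central division algebra over $E_D$, Hensel-lifting $\overline{M}$ to an unramified extension $\widetilde M \subset D_1$ of $E_D$, and then extending $\widetilde M$ to a maximal subfield of $D_1$ (automatically maximal in $D$) by adjoining an appropriate totally ramified part coming from $C_{D_1}(\widetilde M)$. Making this rigorous, and in particular identifying the residue-algebra data of $D_1$ with that of $\overline{D}$, is where the structure theory of valued division algebras enters; once it is in place, the argument above delivers the conclusion.
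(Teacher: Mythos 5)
The paper itself gives no proof of this statement (it is quoted from \cite{Common}, which is listed as ``in preparation''), so I can only assess your argument on its own terms. It contains a genuine gap: in both directions you invoke the assertion that ``every unramified subfield of $D'$ embeds into the maximal unramified subfield $E_{D'}$.'' But $E_{D'}$, as defined just before the proposition, is the \emph{ramification field} --- the unique unramified subfield whose residue field is the \emph{center} $\overline{E}_{D'}$ of the residue algebra $\overline{D'}$. It coincides with the maximal unramified subfield of $D'$ only when $\overline{D'}$ is commutative. In general $\overline{D'}$ is a noncommutative division algebra over $\overline{E}_{D'}$, and lifting any of its proper maximal subfields (via Hensel's lemma in the valuation ring of $D'$) produces unramified subfields of $D'$ that strictly contain $E_{D'}$. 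So ``unramified $\Rightarrow$ lies inside $E_{D'}$'' is false.

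This vitiates both halves of your plan. In the ``easy'' direction, the image of $E_D$ in $D'$ is unramified, but that by itself does not force it into $E_{D'}$. In the ``hard'' direction the flaw is dimensionally visible: your $L$ is an unramified maximal subfield of $D$, so $L_0 = L$ has degree $n = \deg D$ over $K$, whereas $[E_{D'}:K] = [\overline{E}_{D'}:\overline{K}]$ equals $n$ only if $\overline{D'} = \overline{E}_{D'}$; thus the claimed inclusion $L_0 \subset E_{D'}$ can hold only in the degenerate case $\overline{D'}$ commutative, which is not part of the hypotheses. What is actually available after embedding $L \hookrightarrow D'$ is the weaker fact that the residue field $\overline{M}$ of $L$ becomes a commutative subfield of $\overline{D'}$ of degree $n$ over $\overline{K}$, hence a maximal subfield of $\overline{D'}$, hence contains $Z(\overline{D'}) = \overline{E}_{D'}$; together with $\overline{M} \supseteq \overline{E}_D$, the (Galois, hence unambiguous) compositum $\overline{E}_D\,\overline{E}_{D'}$ lies inside every maximal subfield of $\overline{D}$, and variedness then gives $\overline{E}_{D'} \subseteq \overline{E}_D$ --- the opposite of the inclusion you sought from this step. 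The reverse inclusion must be supplied separately, e.g.\ by noting that $D \leq D'$ forces $\Gamma_D = \bigcup_L \Gamma_L \subseteq \Gamma_{D'}$ and hence $[\overline{E}_D:\overline{K}] \leq [\overline{E}_{D'}:\overline{K}]$. Your two-containment strategy and the idea of lifting residue subfields are sound, but the mechanism as written rests on misidentifying $E_{D'}$ with the maximal unramified subfield and would need to be rebuilt along the lines above.
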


To relate this result to the above discussion of ramification, suppose that there exists a residue map
$$
\rho_v \colon {}_n\Br(K) \to \Hom (\mathcal{G}^{(v)}, \Z / n \Z),
$$
where $n = \text{deg} \ D.$ As we mentioned earlier, it is well-known that if $\chi_v = \rho_v ([D]),$ then $\overline{E}_D$ is precisely the subfield of the separable closure of $\overline{K}_v$ corresponding to $\ker \chi_v.$ Thus, Proposition \ref{P:Common1} asserts that
under the assumption that $\overline{D}/\overline{E}_D$ is varied, the conclusion of Lemma \ref{L:=} holds already if $D \leq D'$, and in particular
$D$ and $D'$ are simultaneously either ramified or unramified at $v$. Of course, the above example of quaternion algebras over $K = \R((x))$ shows that this assumption cannot be omitted --- notice that $E_{D_1} = \R((x))$ while $E_{D_2} = \C((x))$. On the other hand, it turns out that all situations where a division algebra is not varied are in some sense of this nature.

\begin{prop}\label{P:Common2}{\rm (\cite{Common}, \S3)}

\noindent (a) \parbox[t]{16cm}{Suppose $D$ is a finite-dimensional central division algebra over a field $K$ that is not varied. Then $K$ is a Pythagorean field and
$$
D = \left(\frac{-1, -1}{K} \right) \otimes_K D'
$$
for some finite-dimensional central division $K$-algebra $D'.$}

\vskip1mm

\noindent (b) \parbox[t]{16cm}{Suppose $K$ is a field that is complete with respect to a discrete valuation and let $D$, $D'$ be central division algebras over $K$ of the same degree such that $D \leq D'$. Then $E_DE_{D'} \subseteq E_{D}(\sqrt{-1})$. If $E_D \subsetneq E_{D'} = E_{D}(\sqrt{-1})$, then the degree $[E_D : K]$ is odd and $K = F((x))$ for a Pythagorean field $F$.}
\end{prop}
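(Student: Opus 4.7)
The plan is to establish part (a) first, since part (b) will follow by applying (a) to the residue algebra $\overline{D}/\overline{E}_D$ and then appealing to Proposition \ref{P:Common1} in the ``varied'' case. For (a), I would first reduce to the case where the cyclic extension $P/K$ common to every maximal subfield of $D$ has prime degree $\ell$ (any nontrivial cyclic extension contains a subextension of prime degree, and this subextension embeds in every maximal subfield as well).

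For part (a) with $\ell$ an odd prime, the strategy is to derive a contradiction via an index/exponent and restriction--corestriction analysis: if every maximal subfield of $D$ contains $P$, then $P$ is a common splitting field for certain Brauer classes associated with $D$, and the cyclic structure together with the fact that $[P:K] = \ell$ divides $\deg D$ should give rigid divisibility constraints on $\deg D$ incompatible with the generic behavior of maximal subfields of a central division algebra (the presence of enough ``independent'' maximal subfields, which one can produce using standard generic-element arguments, prevents a fixed odd cyclic extension from sitting in all of them). For $\ell = 2$, write $P = K(\sqrt{c})$. Here I would analyze the set $\Sigma(D) \subseteq K^{\times}/K^{\times 2}$ of $d$ such that $K(\sqrt{d})$ embeds in $D$ via the reduced trace form; the hypothesis forces $c \Sigma(D) \subseteq \Sigma(D) \cdot K^{\times 2}$, and tracing through the representability conditions (using that sums of two squares from $\Sigma(D)$ must again lie in $\Sigma(D)$) pins down $c \equiv -1 \pmod{K^{\times 2}}$ and forces $K$ to be Pythagorean (otherwise a sum of two squares that is not a square would produce a maximal subfield not containing $\sqrt{-1}$). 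The decomposition $D \simeq \left(\frac{-1,-1}{K}\right) \otimes_K D'$ then follows from Albert's theorem on division algebras containing a quaternion subalgebra, once one exhibits $\left(\frac{-1,-1}{K}\right)$ inside $D$ using a suitable pair of anticommuting elements constructed from the square root of $-1$ common to all maximal subfields.

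For part (b), with $D \leq D'$ over complete discretely valued $K$: if $\overline{D}/\overline{E}_D$ is varied, Proposition \ref{P:Common1} yields $E_D = E_{D'}$ and both claims hold vacuously. Otherwise, apply (a) to $\overline{D}/\overline{E}_D$ to conclude that $\overline{E}_D$ is Pythagorean and contains $-1$ as a non-square, and that the unique cyclic extension forced inside every maximal subfield of $\overline{D}$ is $\overline{E}_D(\sqrt{-1})/\overline{E}_D$. The containment $D \leq D'$ passes to the residues (each maximal separable subfield of $\overline{D}$ embeds in $\overline{D}'$), and since the center $\overline{E}_{D'}$ of $\overline{D}'$ is contained in every maximal subfield of $\overline{D}'$, one concludes $\overline{E}_{D'} \subseteq \overline{E}_D(\sqrt{-1})$; lifting through the unique unramified extensions gives $E_D E_{D'} \subseteq E_D(\sqrt{-1})$. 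In the extremal case $E_D \subsetneq E_{D'} = E_D(\sqrt{-1})$, the square root of $-1$ is genuinely acquired by passing to $E_{D'}$, which combined with the Pythagorean structure of the residue and Cohen/Teichm\"uller lifting forces $K \simeq F((x))$ with $F$ Pythagorean; the oddness of $[E_D : K]$ comes from the fact that $\sqrt{-1} \notin E_D$ while $\sqrt{-1}$ is supplied at the residue level of $E_{D'}$, so the $2$-primary part of $[E_D : K]$ must be trivial.

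The main obstacle is part (a), and within it the $\ell = 2$ case: identifying $c$ with $-1$ up to squares and simultaneously forcing $K$ to be Pythagorean requires a careful interplay between the representability of sums of squares and the classification of quadratic subfields of $D$, and cannot be done purely formally from the hypothesis. The odd $\ell$ case, while conceptually cleaner, also requires producing sufficiently many ``generic'' maximal subfields to rule out a fixed cyclic extension of odd degree sitting in all of them. Once (a) is available, the reduction of (b) via residue analysis and complete-field structure theory is largely routine.
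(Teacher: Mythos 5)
The paper itself gives no proof of this proposition: it is a survey article, and Proposition~\ref{P:Common2} is merely quoted from the reference \cite{Common}, which at the time of writing was listed as ``in preparation.'' So there is no house argument to measure your proposal against; I can only assess it on its own terms. Your overall architecture is the natural one: in (a) reduce to a cyclic $P/K$ of prime degree $\ell$, dispose of odd $\ell$, pin down the quadratic case to $K(\sqrt{-1})$ over a Pythagorean field, and peel off a quaternion factor by the double centralizer theorem; in (b) apply Proposition~\ref{P:Common1} in the varied case, otherwise apply (a) to $\overline{D}/\overline{E}_D$ and use Cohen's structure theorem.

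That said, the load-bearing steps are not actually established, and two of the claimed justifications are dubious as written. (i)~The odd prime case is waved away via ``index/exponent and restriction--corestriction'' and ``generic elements''; this is the hard part of (a), and you offer no argument. (A possible route when $\mu_\ell\subset K$ and $\deg D=\ell$ is that forcing every maximal subfield to be $K(\sqrt[\ell]{c})$ forces $[D]$ to be a power of the symbol $(c,c)_\ell=(c,-1)_\ell=0$ for odd $\ell$, but this needs substantial extension to general degree and to $\mu_\ell\not\subset K$.) (ii)~Knowing that $\sqrt{-1}$ sits in \emph{every} maximal subfield of $D$ does not by itself produce two anticommuting square roots of $-1$ in $D$, so the passage to ``$\left(\frac{-1,-1}{K}\right)\subset D$, then apply double centralizer'' is unjustified. (iii)~In (b), the oddness of $[E_D:K]$ is asserted because ``the $2$-primary part must be trivial,'' but no mechanism is given; the actual reason should be that $\overline{E}_{D'}/\overline{K}$ is \emph{cyclic} (via the residue-character description of ramification), $\overline{K}$ is formally real (it is a subfield of the Pythagorean, formally real field $\overline{E}_D$), and a formally real field admits no cyclic $\Z/4\Z$-extension containing $\sqrt{-1}$ because $K(\sqrt{d})$ embeds into a $\Z/4\Z$-cyclic extension exactly when $d$ is a sum of two squares; hence $4\nmid [\overline{E}_{D'}:\overline{K}]=2[E_D:K]$. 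Your sketch skips all of this, so even granting (a), part (b) is not ``largely routine.''
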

\noindent (Recall that a field $F$ is said to be {\it Pythagorean} if every sum of two squares in $F$ is a square.)

\section{The genus of an algebraic group}\label{S:GenAlgGp}

We would like to conclude this article with a brief overview of ongoing work whose goal is to extend the techniques and results developed in the context of division algebras (which we have outlined in  \S\S \ref{S:Genus}-\ref{S:OtherGenus}) to absolutely almost simple algebraic groups of all types. In this case, the notion of division algebras having the same maximal subfields is replaced with the notion of algebraic groups having the same maximal tori.  More precisely, let $G_1$ and $G_2$ be absolutely almost simple algebraic groups defined over a field $K$. We say that $G_1$ and $G_2$ have the {\it same} $K$-{\it isomorphism} (resp., $K$-{\it isogeny}) {\it classes of maximal $K$-tori} if every maximal $K$-torus $T_1$ of $G_1$ is $K$-isomorphic (resp., $K$-isogenous) to some maximal $K$-torus $T_2$ of $G_2$, and vice versa. Furthermore, let
$G$ be an algebraic $K$-group and $K^{\mathrm{sep}}$ a separable closure of $K$.  We recall that an algebraic $K$-group $G'$ is called a $K$-form (or, more precisely, a $K^{\mathrm{sep}}/K$-form) of $G$ if $G$ and $G'$ become isomorphic over $K^{\mathrm{sep}}$ (see, e.g.,  \cite[Ch. III, \S 1]{Serre} or \cite[Ch. II, \S 2]{Pl-R}). For example, for any central division algebra $D$ of degree $n$ over $K$, there exists a $K^{\mathrm{sep}}$-isomorphism $D \otimes_K K^{\mathrm{sep}} \simeq M_n(K^{\mathrm{sep}})$, which means that the algebraic $K$-group $G = \mathrm{SL}_{1 , D}$ associated with the group of elements in $D$ having reduced norm 1 is a $K$-form of $\mathrm{SL}_n$.


\vskip2mm

\noindent {\bf Definition 6.1.} Let $G$ be an absolutely almost simple algebraic
group over a field $K$. The $(K$-$)${\it genus} $\gen_K(G)$ (or
simply $\gen(G)$ if there is no risk for confusion) of $G$ is
the set of $K$-isomorphism classes of $K$-forms $G'$ of $G$
that have the same $K$-isomorphism classes of maximal $K$-tori as
$G$.

\addtocounter{thm}{1}

\vskip2mm

We should point out that for a finite-dimensional central division $K$-algebra $D$, only maximal {\it separable} subfields of $D$ give rise to maximal $K$-tori of the corresponding group $G = \mathrm{SL}_{1 , D}$. So, in hindsight, to make the definition of $\gen(D)$ consistent with that of $\gen(G)$, one should probably reformulate the former in terms of maximal {\it separable} subfields. This change would not affect the results that were discussed in \S\S \ref{S:Genus}-\ref{S:Ramification} as these dealt
with the case where the degree of the algebra is prime to $\text{char}~K$, but its potential impact on the general case has not yet been investigated. We also observe that while we will be interested primarily in simple algebraic groups with the same {\it isomorphism} classes of maximal tori, the analysis of weakly commensurable Zariski-dense subgroups, which is related to geometric applications (see \cite{PR1}), sometimes requires one to consider simple groups with the same {\it isogeny} classes of maximal tori.

As in the case of division algebras, the focus of our current work is on the following two questions:



\medskip

\noindent \parbox[t]{17cm}{{\bf Question ${\bf 1'}$.} {\it When does $\gen_K(G)$ reduce to a single
element?} \vskip1mm (This means that among $K$-groups of the same
type, $G$ is defined up to $K$-isomorphism by the isomorphism classes
of its maximal $K$-tori.)}

\medskip

\noindent \parbox[t]{16cm}{{\bf Question ${\bf 2'}$.} {\it When is $\gen_K(G)$ finite?}}

\medskip

At this point, only the case of absolutely almost simple algebraic groups over number fields has been considered in full.



\begin{thm}{\rm (cf. \cite[Theorem 7.5]{PR1})}\label{T:WC13}
{\rm (1)} Let $G_1$ and $G_2$ be connected absolutely almost simple
algebraic groups defined over a number field $K$, and let $L_i$ be
the smallest Galois extension of $K$ over which $G_i$ becomes an
inner form of a split group. If $G_1$ and $G_2$ have the same
$K$-isogeny classes of maximal $K$-tori then either $G_1$ and $G_2$
are of the same Killing-Cartan type, or one of them is of type
$\textsf{B}_{n}$ and the other is of type $\textsf{C}_{n}$ $(n \geq 3)$, and
moreover, $L_1 = L_2$.

\smallskip

\noindent {\rm (2)} \parbox[t]{16.5cm}{Fix an absolutely almost
simple $K$-group $G$. Then the set of isomorphism classes of all
absolutely almost simple $K$-groups $G'$ having the same $K$-isogeny
classes of maximal $K$-tori as $G$, is finite.}

\smallskip

\noindent {\rm (3)} \parbox[t]{16.5cm}{Fix an absolutely almost
simple simply connected $K$-group $G$ whose Killing-Cartan type is
different from $\textsf{A}_{n}$, $\textsf{D}_{2n+1}$ $(n > 1)$ or
$\textsf{E}_6$. Then any $K$-form $G'$ of $G$ (in other words, any
absolutely almost simple simply connected $K$-group $G'$ of the
\emph{same} type as $G$) that has the same $K$-isogeny classes of
maximal $K$-tori as $G$, is isomorphic to $G$.}
\end{thm}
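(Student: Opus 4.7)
The plan is to combine three kinds of input: the rigidity of the root datum under the Galois--Weyl action (for (1)), the Borel--Serre finiteness of Galois cohomology with bounded ramification (for (2)), and the Kneser--Harder--Chernousov Hasse principle for simply connected groups (for (3)). For (1), the starting observation is that a $K$-isogeny $T_1 \to T_2$ of maximal $K$-tori induces, after tensoring character lattices with $\Q$, an isomorphism of $\Q$-vector spaces that intertwines both the Weyl group actions and the actions of $\Ga(K^{\mathrm{sep}}/K)$. Since the Weyl group of an absolutely almost simple group acts irreducibly on $X^*(T)\otimes \Q$ and determines the root system up to Langlands duality, the Killing--Cartan types of $G_1$ and $G_2$ must coincide, except for the coincidence between $\textsf{B}_n$ and $\textsf{C}_n$ ($n \geq 3$), whose dual root systems share the hyperoctahedral Weyl group. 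The equality $L_1 = L_2$ then follows because the $\ast$-action of $\Ga(K^{\mathrm{sep}}/K)$ on the Dynkin diagram, which cuts out $L_i$, is recoverable from the Galois action on $X^*(T_i)\otimes \Q$ modulo the Weyl group.

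For (2), with the type pinned down by (1), the next step is to show that at almost all $v \in V^K$ any $G' \in \gen_K(G)$ is $K_v$-isomorphic to $G$. Outside a finite set $S \supset V^K_{\infty}$ the group $G$ is unramified over $K_v$ (quasi-split with good reduction); by transferring an appropriate unramified maximal $K_v$-torus of $G$ to $G'$, one forces $G'$ to also be unramified at $v$ and hence isomorphic to $G$ there, since unramified forms of a given type over a local field are unique. The resulting ``bad'' set $S$ is finite, each $H^1(K_v, \mathrm{Aut}(G))$ for $v \in S$ is finite, and the Borel--Serre theorem bounds the subset of $H^1(K, \mathrm{Aut}(G))$ with ramification contained in $S$; this yields the finiteness of $\gen_K(G)$.

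For (3), one invokes the Hasse principle for simply connected absolutely almost simple $K$-groups, which asserts bijectivity of the map
$$
H^1(K, G) \;\longrightarrow\; \prod_{v \in V^K_{\infty}} H^1(K_v, G).
$$
It thus suffices to prove $G \simeq G'$ over each real place $v$. At such a $v$, absolutely almost simple simply connected real forms of a fixed type are classified by a discrete invariant (the Satake diagram, equivalently the conjugacy class of a maximal compact torus), which for types outside $\textsf{A}_n$, $\textsf{D}_{2n+1}$ ($n > 1$), and $\textsf{E}_6$ is entirely determined by the isogeny class of a maximal $\R$-torus. Since common $K$-isogeny classes of maximal $K$-tori specialize to common isogeny classes of maximal $K_v$-tori, one obtains local isomorphism at every real place, and hence global isomorphism.

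The main obstacle will be the case analysis in (3): verifying that outside the excluded types, the conjugacy class of a maximal $\R$-torus of a simply connected real form determines the form up to isomorphism. The excluded types are precisely those where this fails --- for $\textsf{A}_n$ the failure is the original division-algebra ambiguity of $(\dag)$ itself (over $\Q$, the cubic algebras $D(\varepsilon)$ of \S \ref{S:Introduction} already give non-isomorphic inner forms of $\textsf{A}_2$ with identical $\Q$-isogeny classes of maximal tori), while the $\textsf{D}_{2n+1}$ and $\textsf{E}_6$ failures reflect, respectively, the cyclic-center-$\mu_4$ phenomenon and the cubic \'etale-algebra data intrinsic to $\textsf{E}_6$. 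Handling the remaining types uniformly will likely require a careful inspection of Satake diagrams, or a more conceptual argument showing that the centralizer of a sufficiently generic maximal torus recovers the full Tits index.
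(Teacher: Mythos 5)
The paper does not prove this theorem; it is cited from Prasad--Rapinchuk \cite[Theorem 7.5]{PR1}, so there is no internal proof to compare against. Assessing your proposal on its own merits, however, reveals several genuine gaps.

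For part (1), the central claim --- that a $K$-isogeny $T_1 \to T_2$ of maximal $K$-tori ``intertwines both the Weyl group actions'' on $X^*(T_i)\otimes\Q$ --- is false as stated. An isogeny of tori is simply a homomorphism of algebraic groups with finite kernel; it carries no information about the normalizers $N_{G_i}(T_i)$ in the ambient groups, and hence does not intrinsically relate the Weyl groups $W(G_1,T_1)$ and $W(G_2,T_2)$. The Weyl group is a datum of the pair $(G_i,T_i)$, not of $T_i$ alone. To make your idea work one must use \emph{generic} tori (as in Proposition \ref{P:generic} of the paper and in the cited source): for a torus $T$ whose Galois image in $\mathrm{Aut}(\Phi)$ contains $W(\Phi)$, the Weyl group can be recovered from the Galois action on $X^*(T)$, and only then does a comparison of isogenous generic tori in $G_1$ and $G_2$ constrain the root systems. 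Your subsequent conclusion (same type up to $\textsf{B}_n/\textsf{C}_n$ duality, and $L_1=L_2$) is the right one, but the mechanism you supply does not deliver it.

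For part (3), the reduction is not correct. You invoke the Kneser--Harder--Chernousov bijection $H^1(K,G)\simeq\prod_{v\,\mathrm{real}}H^1(K_v,G)$, but that governs torsors under the simply connected $G$, not $K$-forms of $G$. Forms are classified by $H^1(K,\mathrm{Aut}\,G)$ (or, once the outer type is fixed as in (1), by a twisted $H^1(K,\overline{G})$ with $\overline{G}$ adjoint). The relevant Hasse principle here is injectivity of $H^1(K,\overline{G}) \to \prod_{v\in V^K} H^1(K_v,\overline{G})$ over \emph{all} places, and $H^1(K_v,\overline{G})$ is typically nontrivial at nonarchimedean $v$ (being essentially dual to $Z(G)(K_v)$). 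Thus pinning down $G'$ requires controlling the local isomorphism class at the finitely many ramified nonarchimedean places as well, which your argument does not address. Finally, the archimedean step --- that for the non-excluded types the isogeny class of a maximal $\R$-torus determines the real form --- is asserted rather than proved, and you yourself flag this as ``likely'' to require a case-by-case check of Satake diagrams; as written this is a gap, not a proof. The broad architecture (type rigidity, Borel--Serre finiteness, local-to-global) is the right shape and matches the general strategy of \cite{PR1}, but the intermediate claims in (1) and (3) as written would not survive scrutiny.
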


\noindent Regarding the types excluded in (3), the construction in \cite[\S 9]{PR1} shows that they are honest exceptions, i.e., for each of those types
one can construct non-isomorphic absolutely almost simple simply
connected $K$-groups $G_1$ and $G_2$ of this type over a number
field $K$ that have the same isomorphism classes of maximal
$K$-tori. The case where $G_1$ and $G_2$ are of types $\textsf{B}_{n}$ and
$\textsf{C}_{n}$, respectively, has been analyzed fully in \cite{GarR}.

We now observe that the investigation of $\gen(G)$ presents additional challenges even for groups of the form $G = \mathrm{SL}_{m , D}$, where $D$ is a central division algebra of degree $n$ (we recall that $G$
is a simply connected inner form of type  $\textsf{A}_{\ell}$ with $\ell = mn - 1$, and that all inner forms of this type are obtained in this fashion --- see \cite[Proposition 2.17]{Pl-R}). The reason is that while every maximal $K$-torus of such a group $G$ is a norm one torus
$$
\mathrm{R}^{(1)}_{E/K}(G_m) = \mathrm{R}_{E/K}(G_m) \cap G
$$
for some \'etale subalgebra $E$ of $M_m(D)$, where $G_m$ denotes the 1-dimensional split torus and $\mathrm{R}_{E/K}$ the functor of restriction of scalars, the fact that two such tori  $\mathrm{R}^{(1)}_{E_1/K}(G_m)$ and $\mathrm{R}^{(1)}_{E_2/K}(G_m)$ are $K$-isomorphic as algebraic groups does not in general imply that the algebras $E_1$ and $E_2$ are isomorphic, even when these algebras are field extensions of $K$. This makes it rather difficult to relate $\gen(G)$ and $\gen(D)$ and apply the results of \S\S \ref{S:Genus}-\ref{S:Ramification} directly.
Nevertheless, with some extra work, one can prove the following theorem, which parallels Theorems \ref{T:StabThm} and \ref{T:Finite1} for division algebras.


\begin{thm}{\rm (cf. \cite[Theorem 5.3]{CRR2}\label{T:GenAlgGp1})}
\noindent {\rm (a)} {Let $D$ be a central division algebra of exponent 2 over $K = k (x_1, \dots, x_r)$, where $k$ is either a number field or a finite field of characteristic $\neq 2$. Then for any $m \geq 1$, the genus of $G = \mathrm{SL}_{m, D}$ reduces to a single element.}

\vskip1mm

\noindent {\rm (b)} \parbox[t]{16.5cm}{Let $G$ be an absolutely almost simple simply connected algebraic
group of inner type $\textsf{A}_{\ell}$ over a finitely generated field $K$ whose characteristic is either zero or does not divide $\ell + 1$. Then $\gen(G)$ is finite.}

\end{thm}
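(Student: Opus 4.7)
The plan is to reduce both parts of Theorem \ref{T:GenAlgGp1} to the corresponding results for the genus of a division algebra established in \S\S \ref{S:Genus}-\ref{S:Ramification}. Since $G$ is an absolutely almost simple simply connected group of inner type $\textsf{A}_{\ell}$ over $K$, we have $G \simeq \mathrm{SL}_{m,D}$ for some central division $K$-algebra $D$ of degree $n$ with $mn = \ell+1$, and every maximal $K$-torus of $G$ has the form $T = \mathrm{R}^{(1)}_{E/K}(G_m)$ for a maximal \'etale $K$-subalgebra $E \subset M_m(D)$.

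The first step would be to verify that any $G' \in \gen(G)$ is itself a simply connected inner form of type $\textsf{A}_{\ell}$, and hence $G' \simeq \mathrm{SL}_{m',D'}$ with $m' n' = \ell+1$, where $n' = \deg D'$. In the number-field situation underlying part (a), this is essentially Theorem \ref{T:WC13}; for part (b) over a general finitely generated $K$, one argues directly by producing maximal $K$-tori of $G$ whose decomposition type, together with their behavior over $K^{\mathrm{sep}}$, rules out other Killing-Cartan types and outer forms for $G'$.

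The heart of the argument is the next step, which converts the hypothesis $G' \in \gen(G)$ into a condition on the underlying algebras. The main tool is the family of \emph{diagonal} maximal \'etale subalgebras: if $P/K$ is a maximal separable subfield of $D$, then $E_P = P \oplus \cdots \oplus P$ ($m$ copies) embeds in $M_m(D)$ as a maximal \'etale subalgebra, and the associated torus $T_P = \mathrm{R}^{(1)}_{E_P/K}(G_m)$ must embed as a maximal $K$-torus of $G' = \mathrm{SL}_{m',D'}$. By letting $P$ vary and analyzing how such a torus can sit inside $\mathrm{SL}_{m',D'}$ (taking into account $\deg D'$, the splitting behavior of $P/K$, and the Galois action on the cocharacter lattice of $T_P$), one deduces that $m' = m$, $n' = n$, and that every maximal separable subfield of $D$ embeds into $D'$, and symmetrically; in other words, $[D'] \in \gen(D)$. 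The principal obstacle in this step is that an isomorphism of norm-one tori $\mathrm{R}^{(1)}_{E_1/K}(G_m) \simeq \mathrm{R}^{(1)}_{E_2/K}(G_m)$ does \emph{not} in general imply $E_1 \simeq E_2$ as \'etale $K$-algebras; one therefore has to exploit the very specific structure of the diagonal tori $T_P$ to recover the needed algebra-theoretic information.

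With this link in hand, both parts follow immediately. For (a), Corollary \ref{C:1} gives $\vert \gen(D) \vert = 1$ for any central division algebra $D$ of exponent $2$ over $K = k(x_1, \ldots, x_r)$ with $k$ a number field or a finite field of characteristic $\neq 2$; combined with $\deg D' = \deg D = n$, this forces $D' \simeq D$ and hence $G' \simeq G$. For (b), the hypothesis that $\mathrm{char}\: K$ does not divide $\ell + 1$ ensures that the exponent of $D$, which divides $n$ and hence $\ell+1$, is prime to $\mathrm{char}\: K$, so Theorem \ref{T:Finite1} yields $\vert \gen(D) \vert < \infty$; since $\deg D'$ is determined by $[D'] \in \gen(D)$ and then $m'$ is determined by $m' \deg D' = \ell + 1$, the map $G' \mapsto [D']$ from $\gen(G)$ to $\gen(D)$ is injective, giving the finiteness of $\gen(G)$.
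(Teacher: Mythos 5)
Your overall reduction to the division-algebra genus is the right idea, but the pivotal step of your argument does not go through as written, and it is precisely the step the paper flags as the central difficulty. You propose to test $G' \in \gen(G)$ against the \emph{diagonal} tori $T_P = \mathrm{R}^{(1)}_{P^m/K}(G_m)$ associated to maximal separable subfields $P$ of $D$, and to deduce directly that $[D'] \in \gen(D)$. You then correctly note the obstruction — a $K$-isomorphism $\mathrm{R}^{(1)}_{E_1/K}(G_m) \simeq \mathrm{R}^{(1)}_{E_2/K}(G_m)$ does not force $E_1 \simeq E_2$ — but your only response, ``exploit the very specific structure of the diagonal tori,'' is not an argument. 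For a diagonal torus with $P$ a field and $m > 1$ (and even for $m = 1$ once $\deg D > 2$), the Galois module $X(T_P)$ can be realized by \'etale $K$-algebras $E'$ that are not of the form $Q^m$ with $Q \simeq P$, and the mere fact that $E'$ embeds in $M_{m'}(D')$ does not pin it down. So ``every maximal separable subfield of $D$ embeds into $D'$'' is not established, $[D'] \in \gen(D)$ is not established, and the invocations of Corollary~\ref{C:1} and Theorem~\ref{T:Finite1} that close out both parts of your argument rest on a claim you have not proved.

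The paper's route around this is structurally different. It works with \emph{generic} maximal tori — those $T = \mathrm{R}^{(1)}_{E/K}(G_m)$ with $E/K$ a field of degree $mn$ whose normal closure has Galois group $S_{mn}$ — precisely because for these the Isogeny Theorem provides the rigidity you need: a $K$-isomorphism of generic tori extends to a $K^{\mathrm{sep}}$-isomorphism of the ambient groups and hence forces $E_1 \simeq E_2$. Your diagonal tori $P^m$ are never generic when $m > 1$ (the Galois action on the $mn$ characters is not even transitive), and for $m = 1$ a maximal subfield $P$ need not have Galois group $S_n$. Because generic tori do not exhaust all maximal tori, the paper does \emph{not} conclude $[D'] \in \gen(D)$; it instead combines the rigidity with Proposition~\ref{P:generic} (existence of generic tori with prescribed local behavior) to obtain the \emph{local} statement that $D \otimes_K K_v$ and $D' \otimes_K K_v$ have the same maximal \'etale subalgebras for every discrete valuation $v$, i.e.\ $[D'] \in \gen_V(D)$ for every $V$. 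Part (b) then follows from the finiteness of the local genus via Theorem~\ref{T:FiniteUnram}, while part (a) needs genuine extra work: one writes $K = \ell(x_1)$ with $\ell = k(x_2, \dots, x_r)$, applies Faddeev's sequence to extract a constant algebra $\Delta$ over $\ell$, and then kills $\Delta$ by specializing at a degree-one place, comparing residue algebras, and invoking the exponent-$2$ stability theorem over $\ell$ — rather than the one-line appeal to Corollary~\ref{C:1} that your proposal makes.
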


\noindent The additional input that is needed to prove Theorem \ref{T:GenAlgGp1} are so-called {\it generic tori}. Since these are becoming increasingly useful in a variety of contexts, we will quickly recall here some relevant definitions and results.

Let $G$ be a semisimple algebraic group over a field $K$. Fix a maximal $K$-torus $T$ of $G$ and let $\Phi = \Phi(G , T)$  denote the corresponding root system. Furthermore, let $K_T$ be the minimal splitting field of $T$ and $\Theta_T = \mathrm{Gal}(K_T/K)$ be its Galois group. Then the natural action of $\Theta_T$ on the character group $X(T)$ gives rise to an injective group homomorphism
$$
\theta_T \colon \Theta_T \rightarrow \mathrm{Aut}(\Phi),
$$
and we say that $T$ is {\it generic} over $K$ if the image of $\theta_T$ contains the Weyl group $W(\Phi) = W(G , T)$. For example, for $G = \mathrm{SL}_{m , D}$ as above, a maximal $K$-torus $T = \mathrm{R}^{(1)}_{E/K}(G_m)$ is generic if and only if $E$ is a (separable) field extension of $K$ of degree $mn$ and the Galois group of its normal closure is the symmetric group $S_{mn}$.

The following result shows that when $K$ is finitely generated, one can always find a generic $K$-torus with prescribed local properties.

\begin{prop}\label{P:generic} {\rm (cf. \cite[Corollary 3.2]{PR-Fields})}
Let $G$ be an absolutely almost simple algebraic group over a finitely
generated field $K$. Given a discrete valuation $v$ of $K$ and a
maximal $K_v$-torus $T_v$ of $G$, there exists a maximal $K$-torus
$T$ of $G$ which is generic over $K$ and is conjugate to $T_v$ by an
element of $G(K_v)$.
\end{prop}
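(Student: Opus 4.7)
The plan is to parametrize maximal tori of $G$ by the variety $\mathfrak{X} := G/N_G(T_0)$ (for a fixed maximal $K$-torus $T_0$) and to combine a local $v$-adic density argument with Hilbert irreducibility over the finitely generated field $K$ in order to produce a $K$-point of $\mathfrak{X}$ that simultaneously lies $v$-adically close to the point representing $T_v$ and parametrizes a generic torus.

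First I would note that $\mathfrak{X}$ is a smooth $K$-variety whose $L$-points (for any extension $L/K$, after lifting to $G(L)$) correspond to maximal $L$-tori of $G$; in particular, $T_v$ determines a point $x_v \in \mathfrak{X}(K_v)$. Smoothness at $x_v$ together with the implicit function theorem for $K_v$-analytic manifolds yields a $v$-adic open neighborhood $\Omega_v \subset \mathfrak{X}(K_v)$ of $x_v$ such that every point $y \in \Omega_v$ corresponds to a maximal $K_v$-torus of $G$ that is $G(K_v)$-conjugate to $T_v$. Hence it suffices to find a $K$-point $x \in \mathfrak{X}(K) \cap \Omega_v$ whose associated maximal $K$-torus $T_x$ is generic.

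Next I would encode the generic condition via the natural étale Galois cover
\[
\pi \colon \mathfrak{Y} := G/T_0 \longrightarrow \mathfrak{X} = G/N_G(T_0),
\]
which has Galois group $N_G(T_0)/T_0 \simeq W := W(G, T_0)$. For $x \in \mathfrak{X}(K)$, the image of $\theta_{T_x}$ inside $W \subseteq \mathrm{Aut}(\Phi)$ agrees with the Galois group of the fiber $\pi^{-1}(x)$ viewed as a finite étale $K$-scheme; so $T_x$ is generic precisely when this fiber remains $W$-irreducible over $K$. Since $\mathfrak{X}$ is geometrically $K$-rational and $K$ is dense in $K_v$, a local $K$-rational parameterization around $x_v$ supplies a $K$-point $x_0 \in \mathfrak{X}(K) \cap \Omega_v$. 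Through $x_0$, a Bertini-type genericity argument produces a $K$-rational line $\ell \subset \mathfrak{X}$ for which the pullback $\pi_\ell \colon \pi^{-1}(\ell) \to \ell$ is a connected $W$-Galois cover. Since $K$ is an infinite finitely generated field, it is Hilbertian, so Hilbert irreducibility applied to $\pi_\ell$ yields a subset $H \subset \ell(K)$ at which the specialized fiber is still $W$-irreducible, and any $x \in H \cap \Omega_v$ delivers the required generic $K$-torus $T = T_x$.

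The hard part will be the last step: a Hilbertian subset $H$ of $\ell(K) \cong K$ is automatically Zariski-dense but is not a priori $v$-adically dense, so its intersection with the $v$-adic open set $\ell(K) \cap \Omega_v$ need not be manifestly nonempty. Overcoming this requires coupling Hilbert irreducibility with $v$-adic approximation --- for instance, by selecting $\ell$ from a sufficiently rich $K$-rational family of test lines through $x_0$ and applying Hilbertianness to the total space of the family, so that for suitable parameter values the Hilbertian subset is forced to meet the $v$-adic neighborhood. This compatibility between Galois-theoretic genericity and $v$-adic density is the technical crux of the proof.
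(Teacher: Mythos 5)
Your outline is correct in its overall architecture, and you have put your finger on exactly the technical crux; however, the route you take is genuinely different from the one in the reference the paper cites for this proposition (PR-Fields, Corollary~3.2). There, the statement is deduced from a theorem about Zariski-dense subgroups $\Gamma \subset G(K)$, and the proof does not invoke Hilbert irreducibility at all. Instead one spreads $G$ and $\Gamma$ out over a scheme of finite type over $\Z$, uses the strong approximation theorem of Nori--Weisfeiler to control the reductions of $\Gamma$ at suitably chosen closed points (which have \emph{finite} residue fields), and then exploits the Lang--Steinberg classification of maximal tori of reductive groups over finite fields to force the decomposition groups at those points to realize prescribed conjugacy classes in the Weyl group; taking enough closed points so that the realized classes generate $W$ gives a regular semisimple $\gamma$ whose centralizer $T = Z_G(\gamma)^{\circ}$ is generic, and the $v$-adic constraint is matched by openness of $G(K_v)$-orbits on $\cJ(K_v)$ (your $\Omega_v$ step, which is exactly right). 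The merit of that route is that it completely bypasses the Hilbert-with-approximation issue that you correctly flag; what it pays for this is the machinery of strong approximation and tori over finite fields. Your Hilbert-irreducibility route is legitimate, and the fix you sketch — move the test line in a rich rational family through a $K$-point of $\Omega_v$ and apply Hilbertianness to the total space — is in fact the standard mechanism behind the theorem that Hilbertian subsets of a number field are dense in finitely many completions (Fried--Jarden). But for a finitely generated $K$ of higher transcendence degree and an \emph{arbitrary} discrete valuation $v$ of $K$ this approximation-strengthened Hilbert theorem is not off the shelf; one must run a specialization/induction on the transcendence degree to reduce to the global-field case. So your proposal is viable in outline but leans on an unproved input at the very point you identified as the crux, whereas the reference's proof sidesteps it entirely.
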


The second result that we would like to mention is a rigidity property for isomorphisms between generic tori. More precisely, let $G_1$ and $G_2$ be two simply connected inner $K$-forms of type $\textsf{A}_{\ell}$, and let $T_i$ be a generic maximal $K$-torus of $G_i$ for $i = 1, 2$. Then any $K$-isomorphism $\varphi \colon T_1 \to T_2$ extends to a $K^{\mathrm{sep}}$-isomorphism $\tilde{\varphi} \colon G_1 \to G_2$. (This is a consequence of so-called {\it Isogeny Theorem}, see \cite[Theorem 4.2 and Remark 4.4]{PR1} and \cite[Theorem 9.8]{PR-Gen}.) It follows that for {\it generic} $T_i = R^{(1)}_{E_i/K}(G_m)$  as above, the existence of a $K$-isomorphism of tori $T_1 \to T_2$ {\it does imply} the existence of a $K$-isomorphism of algebras $E_1 \to E_2$. We are now in a position to present

\vskip2mm

\noindent {\it Sketch of the proof of Theorem \ref{T:GenAlgGp1}.} Assume that $K$ is a finitely generated field and let $G' \in \gen_K(G)$. One first shows that $G'$ is an inner form over $K$, and in fact $G' = \mathrm{SL}_{m , D'}$ for a central division $K$-algebra $D'$ of degree $n$. Using the results on generic tori outlined above, one proves that for any discrete valuation $v$ of $K$, the algebras $D \otimes_K K_v$ and $D' \otimes_K K_v$ have the same isomorphism classes of maximal \'etale subalgebras. This means that for any set $V$ of discrete valuations of $K$, the class $[D']$ lies in the local genus $\gen_V(D)$. On the other hand, according to Theorem \ref{T:FiniteUnram}, for a finitely generated $K$ of characteristic prime to $n$, there exists a set $V$ of discrete valuations of $K$ satisfying conditions (I) and (II)
and such that ${}_n\Br(K)_V$ is finite. As we pointed out in \S \ref{S:OtherGenus}, this yields the finiteness of the local genus $\gen_V(D)$, which completes the proof of part (b) of Theorem \ref{T:GenAlgGp1}. To prove part (a), we write $K = \ell(x_1)$ where $\ell = k(x_2, \ldots , x_r)$, and let $V$ be the set of discrete valuations of $K$ that are trivial on $\ell$. Then as in the proof of Theorem \ref{T:StabThm}, the fact that $[D'] \in \gen_V(D)$ in conjunction with Faddeev's sequence implies that
$$
[D] = [D'] [\Delta \otimes_{\ell} K] \ \ \ \text{in} \ \ \Br(K)
$$
for some central division algebra $\Delta$ over $\ell$. Finally, to prove that $\Delta$ is trivial, we pick a place $v_0 \in V$ of degree one so that both $D$  and $D'$ are unramified at $v_0$, and write
$$
D \otimes_K K_{v_0} = M_s(\mathcal{D}) \  \ \text{and} \ \ D' \otimes_K K_{v_0} = M_{s'}(\mathcal{D}'),
$$
with $\mathcal{D} , \mathcal{D}'$ central division $K_{v_0}$-algebras. Then $s = s'$ and $\mathcal{D}$ and $\mathcal{D}'$ have the same maximal (separable) subfields. It follows that the residue algebras $\overline{\mathcal{D}}$ and $\overline{\mathcal{D}'}$ are central division $\ell$-algebras having the same maximal subfields. Since $\overline{\mathcal{D}}$ has exponent 2, by Theorem \ref{T:StabThm} it has trivial genus. It follows that $[\overline{\mathcal{D}}] = [\overline{\mathcal{D}'}]$, and hence $[\Delta]$ is trivial. Thus, $D \simeq D'$, and consequently $G \simeq G'$. \hfill $\Box$

\vskip2mm

\noindent {\bf Remark 6.5.} The nature of the argument that we have just sketched suggests that it makes sense
to consider an alternative definition of $\gen_K(G)$ over a finitely generated field $K$ given in terms of generic maximal $K$-tori.

\vskip2mm

Building on the finiteness result for the genus of an inner form of type $\textsf{A}_{\ell}$ over a finitely generated field (Theorem \ref{T:GenAlgGp1}(b)), it is natural to propose the following.


\vskip2mm

\noindent {\bf Conjecture 6.6.} {\it Let $G$ be an absolutely almost simple simply connected algebraic
group over a finitely generated field $K$ of characteristic 0 or of \emph{good}\footnotemark characteristic relative to $G$.
Then $\gen_K(G)$ is finite.}

\footnotetext{For each type, the following characteristics are
defined to be {\it bad}: type $\textsf{A}_{\ell}$ - all primes
dividing $(\ell + 1)$, and also $p = 2$ for outer forms; types
$\textsf{B}_{\ell}$, $\textsf{C}_{\ell}$, $\textsf{D}_{\ell}$ - $p =
2$, and also $p = 3$ for $^{3,6}\!\textsf{D}_4$; for type $
\textsf{E}_6$ - $p = 2, 3, 5$; for types $\textsf{E}_7$,
$\textsf{E}_8$ - $p = 2, 3, 5, 7$; for types $\textsf{F}_4$,
$\textsf{G}_2$ - $p = 2, 3$. All other characteristics for a given
type are {\it good}.}
\vskip2mm

\addtocounter{thm}{2}

The proof of Theorem \ref{T:GenAlgGp1} indicates that to approach this conjecture, one
needs to extend the techniques based on ramification and the analysis of unramified division algebras to absolutely almost simple groups of all types. An adequate replacement of the notion of an unramified central division algebra is the notion of a group with {\it good reduction}. Suppose that $G$ is an absolutely almost simple algebraic group over a field $K$.
One says that $G$ has good reduction at a discrete valuation $v$ of $K$ if there exists a reductive group scheme\footnotemark $\mathscr{G}$ over the valuation ring $\mathcal{O}_v \subset K_v$  whose generic fiber $\mathscr{G} \otimes_{\mathcal{O}_v} K_v$ is isomorphic to $G \otimes_K K_v$. We then let $\uG^{(v)}$ denote the reduction $\mathscr{G} \otimes_{\mathcal{O}_v} \overline{K}_v$. The following result extends Lemma \ref{L:=} to simple algebraic groups of all types.


\footnotetext{Let $R$ be a commutative ring and $S = \text{Spec}~R.$ Recall that a {\it reductive $R$-group scheme} is a smooth affine group scheme $G \to S$ such that the geometric fibers $G_{\overline{s}}$ are connected reductive algebraic groups (see \cite[Exp. XIX, Definition 2.7]{SGA3} or \cite[Definition 3.1.1]{Conrad1}).}

\begin{thm}\label{T:Main1} {\rm (\cite{CRR4})}
Let $G$ be an absolutely almost simple simply connected group over a
field $K$, and let $v$ be a discrete valuation of $K$. Assume that
the residue field $\overline{K}_v$ is finitely generated and that
$G$ has good reduction at $v$. Then any $G' \in \gen_K(G)$
also has good reduction at $v$. Furthermore, the reduction
${\uG'}^{(v)}$ lies in the genus $\gen_{\overline{K}_v}(\uG^{(v)})$.
\end{thm}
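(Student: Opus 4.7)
The plan is to reduce to the local situation and then use the existence of generic tori (Proposition~\ref{P:generic}) to transport a well-chosen maximal $K$-torus of $G$ having good reduction at $v$ into $G'$, thereby forcing $G'$ to have good reduction as well. This is the natural adaptation to arbitrary simple groups of the strategy of Lemma~\ref{L:=}, with generic tori playing the role previously served by residue characters.

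Passing to the completion $K_v$, the reductive $\mathcal{O}_v$-group scheme $\mathscr{G}$ admits a maximal $\mathcal{O}_v$-torus $\mathscr{T}$ (by SGA3); set $T_v := \mathscr{T} \otimes_{\mathcal{O}_v} K_v$, a maximal $K_v$-torus of $G \otimes_K K_v$ of good reduction whose splitting field over $K_v$ is unramified. By Proposition~\ref{P:generic}, there exists a generic maximal $K$-torus $T$ of $G$ which is $G(K_v)$-conjugate to $T_v$; hence $T \otimes_K K_v$ is $K_v$-isomorphic to $T_v$ and so has good reduction at $v$. Since $G' \in \gen_K(G)$, we can choose a maximal $K$-torus $T'$ of $G'$ that is $K$-isomorphic to $T$; then $T'$ is generic over $K$, and $T' \otimes_K K_v$ inherits good reduction.

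The main obstacle, and the technical heart of the argument, is to deduce that the ambient group $G' \otimes_K K_v$ itself has good reduction. What is needed is a structural criterion of the following shape: \emph{an absolutely almost simple simply connected $K_v$-group that contains a generic maximal $K_v$-torus of good reduction must itself have good reduction}. Heuristically, the genericity of $T'$ ensures that the cocycle describing $G' \otimes_K K_v$ as a form of its quasi-split inner twist is controlled by the (unramified) splitting field of $T'$, while the $\mathcal{O}_v$-model of $T'$ supplies a natural integral torus about which to spread $G'$ out. Making this precise uniformly across all Killing--Cartan types, by combining Bruhat--Tits theory with the classification of reductive $\mathcal{O}_v$-group schemes in SGA3, is where the bulk of the work lies; the rigidity of generic tori supplied by the Isogeny Theorem (as used before Theorem~\ref{T:GenAlgGp1}) should do much of the heavy lifting.

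For the second assertion, pick any maximal $\overline{K}_v$-torus $\overline{S}$ of $\uG^{(v)}$; by smoothness of the scheme of maximal tori of $\mathscr{G}$, it lifts to a maximal $\mathcal{O}_v$-torus $\mathscr{S}$, producing a maximal $K_v$-torus $S$ of $G \otimes_K K_v$ with reduction $\overline{S}$. Applying Proposition~\ref{P:generic} once more yields a generic maximal $K$-torus $T_1$ of $G$ that is $G(K_v)$-conjugate to $S$, and hence whose $K_v$-completion still reduces to $\overline{S}$. Using the genus hypothesis, pick a $K$-isomorphic maximal $K$-torus $T_1'$ of $G'$. Since $G'$ has now been shown to have good reduction, $T_1'$ extends to a maximal $\mathcal{O}_v$-torus of $\mathscr{G}'$, whose reduction is a maximal $\overline{K}_v$-torus of $\uG'^{(v)}$ that is $\overline{K}_v$-isomorphic to $\overline{S}$. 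The reverse embedding follows by the symmetric argument, noting that $G \in \gen_K(G')$ as well, and this establishes $\uG'^{(v)} \in \gen_{\overline{K}_v}(\uG^{(v)})$.
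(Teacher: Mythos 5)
The paper itself does not actually give a proof of this theorem: it is quoted from \cite{CRR4}, which is in preparation, and the only clue offered is that "the proof of this result again makes use of generic tori." Your outline is therefore consistent with the paper's stated strategy (invoking Proposition~\ref{P:generic} and the rigidity of generic tori), but a comparison of details is not possible, so I will instead evaluate correctness.

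There is a genuine gap at the step you yourself flag as the technical heart. The structural criterion you propose --- \emph{an absolutely almost simple simply connected $K_v$-group containing a generic maximal $K_v$-torus with good reduction must itself have good reduction} --- is false. Take $K_v = \Q_p$ and $G' = \mathrm{SL}_{1,D}$, where $D$ is the (ramified) quaternion division algebra over $\Q_p$. Its maximal $\Q_p$-torus is $T' = \mathrm{R}^{(1)}_{L/\Q_p}(\mathbb{G}_m)$ with $L$ the unramified quadratic extension; $T'$ is generic (its splitting Galois group is $\Z/2\Z = W(\textsf{A}_1)$) and extends to a torus over $\Z_p$, yet $\mathrm{SL}_{1,D}$ has bad reduction. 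The underlying reason, which also undermines the heuristic you give, is that the cocycle twisting $G$ into $G'$ lives in $H^1(K_v, \overline{T})$ for the adjoint torus $\overline{T}$, and good reduction of $T'$ does \emph{not} constrain this cohomology to its unramified part. What the proof must exploit is the full two-sided genus hypothesis --- not a single matched torus --- to rule out such a twist; a single generic torus is not enough. A secondary, smaller issue: in your argument for $\underline{G}'^{(v)} \in \gen_{\overline{K}_v}(\underline{G}^{(v)})$, the assertion that $T_1' \otimes_K K_v$ automatically extends to a maximal $\mathcal{O}_v$-torus of the chosen model $\mathscr{G}'$ once $G'$ is known to have good reduction also requires justification (it is a nontrivial Bruhat--Tits compatibility, not a formal consequence of the torus having good reduction as an abstract torus).
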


(We should point out that the proof of this result again makes use of generic tori.)

\vskip2mm

Assume now that the field $K$ is equipped with a set $V$ of discrete
valuations that satisfies the following two conditions:

\medskip

(I) for any $a \in K^{\times}$, the set $V(a) := \{ v \in V \ \vert
\ v(a) \neq 0 \}$ is finite;

\medskip

(III) for any $v \in V$, the residue field $\overline{K}_v$ is
finitely generated.

\medskip

\begin{cor}\label{C:Main1}
If $K$ and $V$ satisfy conditions {\rm (I)} and {\rm (III)}, then for any
absolutely almost simple simply connected algebraic $K$-group $G$,
there exists a finite subset $V_0 \subset V$ (depending on $G$) such
that every $G' \in \gen_K(G)$ has good reduction at all $v \in
V \setminus V_0$.
\end{cor}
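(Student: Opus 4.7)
\emph{Plan.} The proof has a very clean two-step structure: first I show that $G$ itself has good reduction at every $v \in V$ outside some finite subset $V_0$, and then I invoke Theorem \ref{T:Main1} at each remaining place to transfer this property to every member of $\gen_K(G)$. The second step is essentially automatic, so the real content sits in the first step, which is a standard spreading out argument made effective by condition (I).

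For step 1, I would fix an affine flat model of $G$ over some finitely generated subring $R_0 \subset K$ (over $\Z$ in characteristic zero, or over the prime field in positive characteristic), compatible with the group structure --- this is possible because only finitely many structure constants appear. By the general spreading out results for reductive group schemes (see \cite[Exp. XIX]{SGA3}, and also \cite[\S 3]{Conrad1}), after inverting finitely many elements $a_1, \ldots, a_r$ of $R_0$ one obtains a reductive group scheme $\mathscr{G}$ over $R := R_0[a_1^{-1}, \ldots, a_r^{-1}]$ whose generic fiber is $G$. Let $f_1, \ldots, f_s \in K^{\times}$ be a finite generating set of $R$. For any $v \in V$ satisfying $v(f_j) \geq 0$ for all $j$, the inclusion $R \hookrightarrow K$ factors through $\mathcal{O}_v$, so the base change $\mathscr{G} \otimes_R \mathcal{O}_v$ is a reductive $\mathcal{O}_v$-group scheme with generic fiber $G \otimes_K K_v$, showing that $G$ has good reduction at $v$. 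By condition (I), each $V(f_j)$ is finite, and therefore
$$
V_0 := \bigcup_{j=1}^{s} V(f_j) \ \subset \ V
$$
is a finite set outside of which $G$ has good reduction.

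For step 2, pick any $v \in V \setminus V_0$. Condition (III) ensures that $\overline{K}_v$ is finitely generated, and by step 1 we already know that $G$ has good reduction at $v$. Both hypotheses of Theorem \ref{T:Main1} are thus satisfied, so every $G' \in \gen_K(G)$ has good reduction at $v$, which is exactly the desired conclusion.

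The only mildly delicate point is the spreading out used in step 1, namely that one can arrange for the model to be actually \emph{reductive} and not merely affine and flat. Rather than quoting the full SGA3 theory, one can give an ad hoc construction by fixing a finite Galois splitting field $L/K$ of $G$ and writing $G$ as an $L/K$-twist of its Chevalley form $G_0$ by a cocycle $\xi \in Z^1(\Ga(L/K), (\mathrm{Aut}\, G_0)(L))$; since $L/K$ and the cocycle $\xi$ involve only finitely many elements of $L^{\times}$, applying condition (I) to these elements (and the extensions of $V$ to $L$) again yields a finite exceptional set outside of which an integral model of the twist is reductive. Either way, the step is routine and I do not expect a genuine obstacle.
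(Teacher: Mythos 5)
Your proof is correct and follows the intended route: the corollary is stated in the paper without proof precisely because, given Theorem \ref{T:Main1}, it reduces to the standard spreading-out observation that $G$ itself has a reductive integral model over a finitely generated subring $R \subset K$, together with condition (I) to bound the places where $R \not\subset \mathcal{O}_v$. Two small remarks on precision, neither of which affects validity: first, the generic fiber of $\mathscr{G}$ lives over $\mathrm{Frac}(R)$, which may be a proper subfield of $K$, so strictly one should say $\mathscr{G} \otimes_R K \simeq G$ rather than "generic fiber is $G$" (your subsequent computation $\mathscr{G} \otimes_R \mathcal{O}_v \otimes_{\mathcal{O}_v} K_v \simeq G \otimes_K K_v$ is exactly what good reduction requires, so this is harmless); second, for the step $v(f_j) \geq 0 \Rightarrow R \subset \mathcal{O}_v$ one implicitly uses that the prime ring ($\Z$ or $\F_p$) always lies in $\mathcal{O}_v$, which is of course automatic. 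Your alternative twisting argument at the end is also a valid way to realize the reductive model, but quoting \cite[Exp.\ XIX]{SGA3} or \cite{Conrad1} as you do first is cleaner.
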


It follows from Corollary \ref{C:Main1} that in order to prove Conjecture 6.6, it would
suffice to show that every finitely generated field $K$ can be equipped with
a set $V$ of discrete valuations satisfying conditions (I) and (III) and having the following property:

\medskip

\noindent ($\Phi$) \, \parbox[t]{16cm}{For any absolutely almost
simple algebraic $K$-group $G$ such that $\mathrm{char} \: K$ is
good for $G$ and any finite subset $V_0 \subset V$, the set of
$K$-isomorphism classes of (inner) $K$-forms $G'$ having good reduction
at all $v \in V \setminus V_0$, is finite.}

\medskip

\noindent (Obviously, in this formulation one can assume $G$ to be quasi-split over $K$.)
One expects that {\it divisorial} sets of valuations that appeared in the discussion of Theorem \ref{T:FiniteUnram} (i.e. valuations of $K$ arising from the prime divisors on an appropriate arithmetic scheme $X$ with function field $K$) will also work for general algebraic groups.

\vskip2mm

\noindent {\bf Conjecture 6.9.}
{\it Any divisorial set $V$ of discrete valuations of a
finitely generated field $K$ satisfies property $(\Phi)$.}

\addtocounter{thm}{1}

Over a number field $K$, the assertion of Conjecture 6.9 is
an easy consequence of the finiteness results for Galois cohomology ---
see \,\cite[Ch. III, 4.6]{Serre} since a semisimple group over a finite extension of $\Q_p$ that has good reduction is necessarily quasi-split (cf. \cite[Theorem 6.7]{Pl-R}). (Interestingly, there are nonsplit groups
over $\Q$ that have good reduction at \emph{all} primes, see \cite{Gross}, \cite{Conrad}, but there
are no abelian varieties over $\Q$ with smooth reduction everywhere \cite{Font}.) Furthermore, the finiteness of ${}_n\Br(K)_V$ implies Conjecture 6.9 for inner forms of type $\textsf{A}_{\ell}$. We also have the following conditional results for spinor groups.

Let $\mu_2 = \{ \pm 1\}$. Then for any discrete valuation $v$ of $K$
such that $\text{char}~\overline{K}_v \neq 2$ and any $i
\geqslant 1$, one can define a residue map in Galois cohomology
$$
\rho^i_v \colon H^i(K , \mu_2) \to H^{i-1}(\overline{K}_v , \mu_2)
$$
extending the map (\ref{E:LocalRes1}) introduced in \S\ref{S:Ramification}, to all dimensions (see, e.g.,
\cite[3.3]{CT} or \cite[6.8]{GiSz} for the details). Then for any
set $V$ of discrete valuations of $K$ one defines the unramified
part $H^i(K , \mu_2)_V$ to be $\bigcap_{v \in V} \ker
\rho^i_v$ (of course, $H^2(K , \mu_2)_V =
{}_2\Br(K)_V$).
\begin{thm}{\rm (\cite{CRR4})}
Let $K$ and $V$ be as in Conjecture 6.9. Assume that
for any finite set $V_0 \subset V$, the unramified cohomology groups
$H^i(K , \mu_2)_{V \setminus V_0}$ are finite for all $i \geq
1$. Then for any $n \geq 5$ and any finite subset $V_0 \subset V$, the set of
$K$-isomorphism classes of the spinor groups
${\rm Spin}_n(q)$ having good reduction at all $v \in V
\setminus V_0$, is finite.
\end{thm}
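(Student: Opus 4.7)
My strategy is to reduce from spinor groups to quadratic forms, and from there to cohomological invariants, where the hypothesis applies directly. First I would use that for $n \geq 5$, every $K$-form of $\mathrm{Spin}_n$ has the shape $\mathrm{Spin}_n(q)$ for some nondegenerate $n$-dimensional quadratic form $q$ over $K$, with $\mathrm{Spin}_n(q) \simeq \mathrm{Spin}_n(q')$ over $K$ if and only if $q$ and $q'$ are similar. After enlarging $V_0$ (which remains finite by property (I) applied to $2 \in K^{\times}$) to include all $v \in V$ with $\mathrm{char}\:\overline{K}_v = 2$, I would establish that $\mathrm{Spin}_n(q)$ has good reduction at $v \in V \setminus V_0$ if and only if some scalar multiple of $q$ over $K_v$ extends to a unimodular quadratic form over $\mathcal{O}_v$: the forward direction extracts an invariant quadratic form from a reductive $\mathcal{O}_v$-model $\mathscr{G}$ via the appropriate sub-quotient of its Lie algebra, and the reverse direction constructs the spin group scheme from the associated orthogonal group scheme.

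Next I would invoke Milnor--Voevodsky theory. To any $n$-dimensional form $q$ one attaches cohomological invariants $e_i(q) \in H^i(K, \mu_2)$ for $1 \leq i \leq N(n)$, with $N(n) = \lfloor \log_2 n \rfloor + 1$: $e_1$ is the discriminant, $e_2$ the Hasse--Witt invariant, and higher $e_i$ are Arason-type invariants defined on the filtration $I^i(K)$ of the Witt ring. Combining Voevodsky's theorem (yielding $I^i/I^{i+1} \simeq H^i(K, \mu_2)$) with the Arason--Pfister Hauptsatz (an anisotropic form in $I^{N(n)+1}(K)$ has dimension $>n$), these finitely many invariants determine $q$ in the Witt ring, hence up to $K$-isomorphism in fixed dimension $n$; the passage from isomorphism to similarity then introduces only a further twist by a scalar in $K^{\times}/(K^{\times})^2 = H^1(K, \mu_2)$. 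The crucial compatibility needed is that if $q$ extends to a unimodular form over $\mathcal{O}_v$, then each $e_i(q)$ lies in $\ker \rho^i_v$: this is classical for $i = 1, 2$ (where it amounts to an unramified discriminant and the analogue of Lemma \ref{L:=} for the Hasse--Witt invariant), and in higher degrees follows from compatibility of Voevodsky's isomorphism with residues on Milnor $K$-theory.

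Putting everything together, for any $G' = \mathrm{Spin}_n(q')$ with good reduction at all $v \in V \setminus V_0$, each invariant $e_i(q')$ lies in the finite group $H^i(K, \mu_2)_{V \setminus V_0}$ for $1 \leq i \leq N(n)$, and the similitude factor may likewise be taken in $H^1(K, \mu_2)_{V \setminus V_0}$, so only finitely many similarity classes (and hence $K$-isomorphism classes of $G'$) are possible. The main obstacle I anticipate is step two: rigorously establishing that every higher cohomological invariant of an unramified quadratic form is itself unramified---which amounts to pushing the Voevodsky isomorphism through the residue machinery---and carefully tracking the interaction of the similitude factor with the invariants, since $e_1(\lambda q) = \lambda^n \cdot e_1(q)$ behaves differently in the odd- and even-dimensional cases. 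The remaining work is an assembly of these ingredients with the finiteness hypothesis on unramified cohomology.
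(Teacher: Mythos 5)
The paper does not give a proof of this theorem — it is cited to \cite{CRR4}, listed as ``in preparation'' — so a line-by-line comparison is not possible, but your proposal correctly identifies the strategy that reference develops: reduce spinor groups to similarity classes of quadratic forms, translate good reduction at $v$ (for $\mathrm{char}\,\overline{K}_v\neq 2$) into the existence of a unimodular $\mathcal{O}_v$-model in the similarity class, and then bound the number of forms using the Milnor--Voevodsky cohomological invariants $e_i$ on the $I$-filtration of the Witt ring, the Arason--Pfister Hauptsatz, and compatibility of the $e_i$ with the residue maps $\rho^i_v$. The hurdles you flag (unramifiedness of the higher invariants of an unramified form, and the behavior of the similitude twist, which interacts with $e_1$ differently for odd and even $n$) are indeed the technical crux. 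One small inaccuracy, though immaterial here: for $n$ even it is \emph{not} true that every $K$-form of $\mathrm{Spin}_n$ has the shape $\mathrm{Spin}_n(q)$ — there are forms arising from quaternion algebras with orthogonal involution, and triality at $n=8$ — but since the theorem only quantifies over groups of the form $\mathrm{Spin}_n(q)$, this misstatement does not affect the argument. It is also worth being a bit more explicit than your write-up is that the $e_i$ are not literally defined on an arbitrary $n$-dimensional form: the counting argument should fix a reference form $q_0$ with good reduction everywhere, pass to $q-q_0\in I^1$, and run the filtration inductively, with Arason--Pfister closing the induction once $2^{N+1}>2n$; this is what your $N(n)=\lfloor\log_2 n\rfloor+1$ is implicitly doing, but the inductive structure deserves to be spelled out.
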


It is important to point out that the range of potential consequences of Conjecture 6.9 goes beyond the finiteness of the genus (Conjecture 6.6) and includes, for example, the Finiteness Conjecture for weakly commensurable Zariski-dense subgroups (see \cite[Conjecture 6.1]{R-ICM}) as well the finiteness of the Tate-Shafarevich set in certain situations. More precisely, let $G$ be an absolutely almost simple simply connected algebraic group over a field $K$ of good characteristic, and let $V$ be a divisorial set of discrete valuations of $K$. Denote by
$$
\text{{\brus SH}}(\overline{G}) \: :=  \: \ker \left( H^1(K
, \overline{G}) \longrightarrow \prod_{v \in V} H^1(K_v ,
\overline{G}) \right)
$$
the Tate-Shafarevich set
for the corresponding adjoint group $\overline{G}$. We can pick a
finite subset $V_0 \subset V$ so that $G$ has good reduction at
all $v \in V \setminus V_0$. Suppose $\xi \in \text{{\brus
SH}}(\overline{G})$ and let $G' = {}_{\xi}G$ be the corresponding
twisted group. By our assumption, $G' \simeq G$ over $K_v$ for all $v \in V$, and consequently
$G'$ has smooth reduction at all $v \in V \setminus
V_0$. Now, assuming Conjecture 6.9, we can conclude that
the groups
groups ${}_{\xi}G$ for $\xi \in \text{{\brus SH}}(\overline{G})$
form finitely many $K$-isomorphism classes; in other words, the
image of $\text{{\brus SH}}(\overline{G})$ under the canonical map
$H^1(K , \overline{G}) \stackrel{\lambda}{\longrightarrow} H^1(K ,
\mathrm{Aut} \: G)$ is finite. But since $\overline{G} \simeq
\mathrm{Int} \: G$ has finite index in $\mathrm{Aut} \: G$, the
map $\lambda$ has finite fibers, which would give the finiteness of
$\text{{\brus SH}}(\overline{G})$.

We note that Theorem \ref{T:Main1} can be used not only to investigate the finiteness of the genus, but also to prove that in some situations the genus is trivial. For example, we have the following.

\begin{thm}{\rm (\cite{CRR4})}\label{T:Main10}
Let $G$ be a simple group of type $\textsf{G}_2$ over a field of rational functions $K = k(x)$, where $k$ is a
global field of characteristic $\neq 2$. Then $\gen_K(G)$ consists
of a single element.
\end{thm}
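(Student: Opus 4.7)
A simple algebraic group $G$ of type $\textsf{G}_2$ over a field of characteristic $\neq 2$ is determined up to isomorphism by its octonion algebra, equivalently by its 3-fold Pfister norm form, equivalently by its Arason invariant $\alpha(G) \in H^3(K, \mu_2)$. Hence Theorem~\ref{T:Main10} reduces to showing $\alpha(G) = \alpha(G')$ in $H^3(K, \mu_2)$ for every $G' \in \gen_K(G)$.

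First, as a base case, I would show that over any global field $\ell$ of characteristic $\neq 2$, the $\textsf{G}_2$-genus consists of a single element. Groups of type $\textsf{G}_2$ over $\ell$ satisfy the Hasse principle (via the associated Pfister forms) and are classified by the set of real places at which the octonion algebra is anisotropic. If two such groups $G_1, G_2$ differ at a real place $v$, say $G_1$ is $v$-compact and $G_2$ is $v$-split, then Proposition~\ref{P:generic} produces a maximal $\ell$-torus $T$ of $G_2$ whose localization $T \otimes_\ell \ell_v$ is $\ell_v$-split; such a $T$ cannot appear as a maximal $\ell$-torus of $G_1$, because compact $\textsf{G}_2$ over $\mathbb{R}$ admits only anisotropic maximal tori. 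This contradicts the hypothesis that $G_1$ and $G_2$ share their $\ell$-isomorphism classes of maximal $\ell$-tori.

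Now let $K = k(x)$, $G' \in \gen_K(G)$, $\alpha := \alpha(G)$, $\alpha' := \alpha(G')$, and let $V_0$ be the set of geometric places of $K$. Each residue field $\overline{K}_v$ is a finite extension of $k$, hence a global field, and Theorem~\ref{T:Main1} applies: at every $v \in V_0$ where $G$ has good reduction, $G'$ also has good reduction and $\underline{G'}^{(v)} \in \gen_{\overline{K}_v}(\underline{G}^{(v)})$, so the base case yields $\underline{G}^{(v)} \simeq \underline{G'}^{(v)}$. Since good reduction of a $\textsf{G}_2$-group at $v$ is equivalent to $\rho^3_v(\alpha) = 0$, we obtain $\rho^3_v(\alpha) = \rho^3_v(\alpha')$ at every $v$ of good reduction. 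To extend this equality of residues to the finitely many places where $G$ ramifies, one needs a $\textsf{G}_2$-analogue of Lemma~\ref{L:=}: after base change to $K_v$, the octonion algebras $\mathbb{O} \otimes_K K_v$ and $\mathbb{O}' \otimes_K K_v$ have the same maximal \'etale $K_v$-subalgebras, and via the ramification theory of octonion algebras over complete discretely valued fields one extracts that the associated residues in $H^2(\overline{K}_v, \mu_2)$ coincide, with Proposition~\ref{P:generic} supplying enough generic embeddings to detect them.

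With $\rho^3_v(\alpha) = \rho^3_v(\alpha')$ for all $v \in V_0$, the Faddeev exact sequence
$$
0 \to H^3(k, \mu_2) \to H^3(k(x), \mu_2) \xrightarrow{\oplus \rho^3_v} \bigoplus_{v \in V_0} H^2(\overline{K}_v, \mu_2)
$$
shows that $\beta := \alpha - \alpha'$ lies in $H^3(k, \mu_2)$, i.e.\ is a ``constant'' class. A specialization argument modelled on the proof of Theorem~\ref{T:StabThm}(a) then closes the proof: choose a $k$-rational point $x_0 \in \A^1_k$ at which both $G$ and $G'$ have good reduction, so that $G(x_0), G'(x_0)$ are $\textsf{G}_2$-groups over $k$; a transport-of-tori argument using generic tori gives $G'(x_0) \in \gen_k(G(x_0))$, whence by the base case $G(x_0) \simeq G'(x_0)$, forcing $\beta(x_0) = 0$, and since $\beta$ is constant, $\beta = 0$ and $\alpha = \alpha'$. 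The chief obstacle is the $\textsf{G}_2$-analogue of Lemma~\ref{L:=} at places of bad reduction, i.e.\ controlling how the residue of a 3-Pfister form over $K_v$ is constrained by equality of maximal \'etale subalgebras of the completed octonion algebras.
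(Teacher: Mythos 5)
Your outline follows the same strategy that the paper signals for this result: use Theorem~\ref{T:Main1} to control good reduction, classify $\textsf{G}_2$-forms by the Arason invariant in $H^3(K,\mu_2)$, apply Faddeev's sequence in degree 3 to isolate a constant class, and kill that constant by specialization together with the triviality of the genus over a global field (which is already Theorem~\ref{T:WC13}(3) for number fields and is immediate for global function fields since all $\textsf{G}_2$-forms there are split). The base case and the specialization endgame are sound, and since the actual proof lives in \cite{CRR4} (in preparation) a line-by-line comparison is not possible, but the structure you give is the natural one.

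The gap you flag as ``the chief obstacle'' is, however, genuine and your sketch of how to cross it is too thin to count as a proof. Theorem~\ref{T:Main1} only gives $\rho^3_v(\alpha)=0=\rho^3_v(\alpha')$ at places of \emph{good} reduction; exactness of the Faddeev sequence requires $\rho^3_v(\alpha)=\rho^3_v(\alpha')$ at \emph{every} geometric place, including the finitely many places where $G$ ramifies, and nothing you have written establishes this. There are two separate issues buried in your sketch. First, $G'\in\gen_K(G)$ says that $G$ and $G'$ have the same $K$-isomorphism classes of maximal $K$-tori; passing from this to a statement about $K_v$-tori (or $K_v$-octonion subalgebras) is exactly the job of generic tori via Proposition~\ref{P:generic}, and this step needs to be carried out rather than invoked. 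Second, for $\textsf{G}_2=\mathrm{Aut}(\mathbb{O})$ the relation between maximal tori and subalgebras of $\mathbb{O}$ is considerably more delicate than the bijection between maximal tori of $\mathrm{SL}_{1,D}$ and maximal \'etale subalgebras of $D$: a maximal $\textsf{G}_2$-torus corresponds not to a single \'etale subalgebra but to a tower involving a quadratic \'etale subalgebra and a quaternion subalgebra of $\mathbb{O}$, so ``same maximal \'etale subalgebras of $\mathbb{O}\otimes_K K_v$'' is neither clearly what the hypothesis gives nor clearly what is needed to pin down the residue class in $H^2(\overline{K}_v,\mu_2)$. Making the analogue of Lemma~\ref{L:=} precise in this setting --- that two $\textsf{G}_2$-forms in the same local genus over $K_v$ have Arason invariants with the same residue --- is the technical heart of the theorem, and your proposal leaves it as a black box.
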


We also note the following stability statement.

\begin{thm}{\rm (\cite{CRR4})}\label{T:Stab-const}
Let $G$ be an absolutely almost simple simply connected algebraic
group over a finitely generated field $k$ of characteristic zero.
Then for the field $k(x)$ of rational functions, every $G' \in \gen_{k(x)}(G  \otimes_k k(x))$ is of the
form  $H \otimes_k k(x)$ with $H \in
\gen_k(G)$.
\end{thm}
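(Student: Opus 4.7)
The plan is to exploit the good-reduction transfer provided by Theorem \ref{T:Main1} at every geometric place of $k(x)/k$, assemble the resulting local models into a reductive group scheme over $\mathbb{P}^1_k$, and then invoke the rigidity of reductive group schemes on $\mathbb{P}^1_k$ in characteristic zero to descend down to $k$.

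First I would introduce the set $V$ of discrete valuations of $k(x)$ that are trivial on $k$ — the geometric places, which correspond bijectively to the closed points of $\mathbb{P}^1_k$. Since $G$ is a reductive $k$-group, the constant group scheme $G \times_k \mathcal{O}_v$ is a reductive model of $G \otimes_k k(x)$ over $\mathcal{O}_v$ for every $v \in V$, so $G \otimes_k k(x)$ has good reduction at every $v \in V$. Moreover, for each closed point $P \in \mathbb{P}^1_k$ the residue field $\overline{k(x)}_v = k(P)$ is a finite extension of $k$, hence finitely generated. Theorem \ref{T:Main1} therefore applies and shows that any $G' \in \gen_{k(x)}(G \otimes_k k(x))$ also has good reduction at every $v \in V$, and that at each such $v$ the reduction $\underline{G'}^{(v)}$ lies in $\gen_{k(P)}(G \otimes_k k(P))$.

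Next, I would glue these local reductive models into a global reductive group scheme $\mathscr{G}'$ over $\mathbb{P}^1_k$ with generic fiber $G'$, and specialize at a $k$-rational point $P_0$ (say $x=0$) to produce a reductive $k$-group $H := \mathscr{G}'_{P_0}$. By the previous step applied to $v_{P_0}$, this $H$ already lies in $\gen_k(G)$. It then remains to show that $\mathscr{G}' \simeq H \times_k \mathbb{P}^1_k$, i.e.\ that $G' \simeq H \otimes_k k(x)$. In characteristic zero this is a rigidity (``constancy'') statement for reductive group schemes over the projective line, obtainable from classical results of Raghunathan--Ramanathan on principal bundles over $\mathbb{A}^1_k$ and their generalizations.

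The main obstacle will be the combination of the gluing in the second step and the descent in the third. Passing from a family of good reductive models over the completions $\widehat{\mathcal{O}}_v$ to an actual reductive group scheme on $\mathbb{P}^1_k$ requires an effective descent / patching input for reductive group schemes, and the subsequent constancy step uses the full rigidity of reductive schemes over the line — which is precisely where the hypothesis $\mathrm{char}~k = 0$ enters in an essential way. For inner forms of type $\textsf{A}_{\ell}$, both difficulties can be bypassed by reducing to central simple algebras and using Faddeev's exact sequence together with the Stability Theorem \ref{T:StabThm}, in the style of the proof of Theorem \ref{T:GenAlgGp1}(a); for groups of other Killing--Cartan types the structural results on reductive group schemes over $\mathbb{A}^1_k$ are genuinely required.
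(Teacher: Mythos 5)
The paper does not actually prove Theorem~\ref{T:Stab-const}: it only states the result and cites \cite{CRR4}, which is listed as ``in preparation.'' So there is no proof here to match yours against, and I can only assess your outline on its own merits. That said, your strategy --- exploit good reduction of $G\otimes_k k(x)$ at every geometric place, invoke Theorem~\ref{T:Main1} to transfer good reduction to $G'$, assemble a global model, and then appeal to constancy of reductive group schemes over the line --- is very much in the spirit of the good-reduction framework the paper sets up in \S\ref{S:GenAlgGp}, and is almost certainly close to what \cite{CRR4} does. Your observation that inner forms of type $\textsf{A}_\ell$ admit a shortcut via Faddeev's sequence and the Stability Theorem, as in Theorem~\ref{T:GenAlgGp1}(a), is also apt.

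There is, however, one genuine error in the write-up. The claim $\mathscr{G}' \simeq H\times_k\mathbb{P}^1_k$ --- ``rigidity of reductive group schemes on $\mathbb{P}^1_k$'' --- is \emph{false} as stated, even in characteristic zero and over an algebraically closed base. For instance, over $\mathbb{P}^1_{\mathbb C}$ the group scheme $SL(\mathcal{O}(1)\oplus\mathcal{O})$ is a semisimple simply connected form of $SL_2\times\mathbb{P}^1_{\mathbb C}$ which is \emph{not} constant (its class in $H^1(\mathbb{P}^1_{\mathbb C},PGL_2)$ is nontrivial because $\mathcal{O}(1)\oplus\mathcal{O}$ is not a line-bundle twist of the trivial bundle), yet its generic fiber is the constant $SL_2\otimes_{\mathbb C}\mathbb{C}(t)$. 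What you can prove --- and what suffices for the theorem --- is constancy of the restriction $\mathscr{G}'|_{\mathbb{A}^1_k}$ (equivalently, constancy of the generic fiber), and for this the Raghunathan--Ramanathan theorem over $\mathbb{A}^1_k$, extended to arbitrary $k$ of characteristic $0$ and to $\mathrm{Aut}$-torsors by Gille and Chernousov--Gille--Pianzola, is the correct input; the specialization point $x=0$ you chose lies in $\mathbb{A}^1_k$, so the rest of the argument is then consistent. You should also be more careful about the gluing step: good reduction at $v$ produces a reductive model over the \emph{completed} valuation ring $\widehat{\mathcal{O}}_v$, and passing from these to a reductive group scheme over $\mathbb{P}^1_k$ (or $\mathbb{A}^1_k$) requires first descending to the local ring $\mathcal{O}_{\mathbb{P}^1_k,P}$ (uniqueness of reductive models over a complete DVR plus a standard descent argument) and then patching the finitely many ``bad'' local models onto an open model obtained by spreading out, e.g.\ via Beauville--Laszlo. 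These are manageable, but they do need to be carried out.
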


Combining this theorem with Theorem \ref{T:WC13}, we conclude that if $G$ is an absolutely almost
simple simply connected algebraic group of type different from
$\textsf{A}_{\ell}$ ($\ell > 1$), $\textsf{D}_{2\ell + 1}$ ($\ell >
1$) and $\textsf{E}_6$ over a number field $k$, then $\gen_{k(x)}(G
\otimes_k k(x))$ consists of a single element.

We began the article by mentioning the result of Amitsur \cite{Ami} on finite-dimensional central division algebras having the same splitting fields and explaining the additional features one encounters if one consider only finite-dimensional splitting fields or just maximal subfields; this eventually led to
our definition of the genus of a division algebra and, later, of a simple algebraic group. Now, we would like to conclude with a different notion of the genus, which is also based on the consideration of maximal \'etale subalgebras or maximal tori, but at the same time incorporates the availability of infinite-dimensional splitting fields, which was the key in Amitsur's theorem. This (more functorial) notion was proposed by A.S.~Merkurjev. One defines the {\it motivic genus} $\gen_m(G)$ of an absolutely almost simple algebraic $K$-group $G$ as the set of $K$-isomorphism classes of $K$-forms $G'$ of $G$ that have the same isomorphism classes of maximal tori not only over $K$ but also over any field extension $F/K$. Then Amitsur's theorem implies that for $G = \mathrm{SL}_{1 , D}$, the motivic genus is always finite, and reduces to one element for $D$ of exponent two.
Furthermore, according to a result
of Izhboldin \cite{Izhb}, given nondegenerate quadratic forms $q$
and $q'$ of {\it odd} dimension $n$ over a~field $K$ of
characteristic $\neq 2$ the following condition

\vskip1mm

\noindent $(\star)$ {\it $q$ and $q'$ have the same Witt index over
any extension $F/K$,}

\vskip1mm

\noindent implies that $q$ and $q'$ are scalar multiples of each
other (this conclusion being false for even-dimensional forms). It
follows that for $G = \mathrm{Spin}_n(q)$, with $n$ odd, $\vert \gen_m(G) \vert =1.$ We note
that condition $(\star)$ is equivalent to the fact that the
motives of $q$ and $q'$ in the category of Chow motives are
isomorphic (Vishik \cite{Vish1}, and also Vishik \cite[Theorem
4.18]{Vish2}, Karpenko \cite{Karp}), which motivated the choice of terminology for this version of the genus. Other groups have not yet been investigated.

\vskip3mm

\noindent {\small {\bf Acknowledgements.} The first-named author was supported by the Canada Research Chair Program and by an NSERC research grant. The second-named author was partially supported by NSF grant DMS-1301800, BSF grant 201049 and the Humboldt Foundation. The third-named author was supported by an NSF Postdoctoral Fellowship. Part of this article was written during the Joint IMU-AMS meeting in Israel (June 2014), and the warm reception of  Bar-Ilan and Tel Aviv Universities hosting the meeting is thankfully acknowledged.}

\vskip5mm

\bibliographystyle{amsplain}

\end{document}